\theoremstyle{plain}
\newtheorem{thm}{Theorem}[section]
\newtheorem{prop}[thm]{Proposition}
\newtheorem{lemma}[thm]{Lemma}
\newtheorem{cor}[thm]{Corollary}
\theoremstyle{definition}
\newtheorem{defn}[thm]{Definition}
\theoremstyle{remark}
\newtheorem{rmk}[thm]{Remark}
\newcommand{\vol}{\ensuremath{\mathsf{vol}}}
\newcommand{\G}{\ensuremath{\operatorname{G_2}}}
\newcommand{\SP}{\ensuremath{\operatorname{Spin}(7)}}
\newcommand{\U}{\ensuremath{\operatorname{U}(1)}}
\newcommand{\Gs}{\ensuremath{\operatorname{G_2}}{-structure}}
\newcommand{\SPs}{\ensuremath{\operatorname{Spin}(7)}{-structure}}
\newcommand{\Z}{\ensuremath{\mathbb Z}}
\newcommand{\R}{\ensuremath{\mathbb R}}
\newcommand{\C}{\ensuremath{\mathbb C}}
\newcommand{\ph}{\ensuremath{\varphi}}
\newcommand{\ps}{\ensuremath{\psi}}
\newcommand{\wtwf}{\ensuremath{\Omega^2_{14}}}
\newcommand{\wtho}{\ensuremath{\Omega^3_1}}
\newcommand{\wths}{\ensuremath{\Omega^3_7}}
\newcommand{\wtht}{\ensuremath{\Omega^3_{27}}}
\newcommand{\st}{\ensuremath{\ast}}
\newcommand{\hk}{\mathbin{\! \hbox{\vrule height0.3pt width5pt 
depth 0.2pt \vrule height5pt width0.4pt depth 0.2pt}}}
\newcommand{\stph}{\ensuremath{\ast \varphi}}
\newcommand{\lieg}{\ensuremath{\operatorname{\mathfrak {g}_2}}}
\newcommand{\tr}{\ensuremath{\operatorname{Tr}}}
\newcommand{\ddx}[1]{\ensuremath{\frac{\del}{\del\:\!\! x^{#1}}}}
\newcommand{\dx}[1]{\ensuremath{d\:\!\! x^{#1}}}
\newcommand{\nab}[1]{\ensuremath{\nabla_{\! \! #1 \,}}}
\newcommand{\oph}{\ensuremath{\varphi_{\!\text{o}}}}
\newcommand{\del}{\ensuremath{\partial}}
\newcommand{\delbar}{\ensuremath{\bar \partial}}
\newcommand{\ddt}{\ensuremath{\frac{\partial}{\partial t}}}
\newcommand{\stp}{\ensuremath{\ast_{\varphi}}}
\newcommand{\HtwR}{\ensuremath{H^2 (M, \mathbb R)}}
\newcommand{\HtR}{\ensuremath{H^3 (M, \mathbb R)}}
\newcommand{\HfR}{\ensuremath{H^4 (M, \mathbb R)}}
\newcommand{\HtZ}{\ensuremath{H^3 (M, \mathbb Z)}}
\newcommand{\HfZ}{\ensuremath{H^4 (M, \mathbb Z)}}
\newcommand{\HtS}{\ensuremath{H^3 (M, S^1)}}
\newcommand{\HfS}{\ensuremath{H^4 (M, S^1)}}
\newcommand{\llangle}{\ensuremath{\langle \! \langle}}
\newcommand{\rrangle}{\ensuremath{\rangle \! \rangle}}
\newcommand{\nl}{\ensuremath{| \! |}}
\newcommand{\Hol}{\ensuremath{\mathrm{Hol}}}
\newcommand{\GJ}{\ensuremath{\mathcal G_{\! \mathcal J}}}
\numberwithin{equation}{section}
\numberwithin{table}{section}
\numberwithin{figure}{section}
\begin{document}

\title{Hodge Theory for \G-manifolds: \\
Intermediate Jacobians and Abel-Jacobi maps}

\author{Spiro Karigiannis \\ Mathematical Institute \\ University of
Oxford \\ \ \\ Naichung Conan Leung \\ Institute of Mathematical
Sciences and \\ Department of Mathematics \\ Chinese University of Hong Kong}

\maketitle

\begin{abstract}
We study the moduli space $\mathcal M$ of torsion-free \G-structures
on a fixed compact manifold $M^7$, and define its associated {\em
universal intermediate Jacobian} $\mathcal J$. We define the Yukawa
coupling and relate it to a natural pseudo-K\"ahler structure on
$\mathcal J$.

We consider natural Chern-Simons type functionals, whose critical
points give associative and coassociative cycles (calibrated
submanifolds coupled with Yang-Mills connections), and also deformed
Donaldson-Thomas connections. We show that the moduli spaces of these
structures can be isotropically immersed in $\mathcal J$ by means of
\G-analogues of {\em Abel-Jacobi} maps.
\end{abstract}

\section{Introduction} \label{introsec}
Compact manifolds with \G\ holonomy play the same role in M-theory as
Calabi-Yau threefolds do in string theory, both being Ricci-flat
and admitting a parallel spinor. In fact, there are analogues of the
mirror symmetry phenomenon~\cite{A, GYZ} that are expected to hold in
the context of \G-manifolds. These ideas are still not well understood
mathematically.

There are various special geometric structures that can be associated
to a \G-manifold and there are relationships between these structures.
Specifically we wish to consider analogues in \G-geometry of {\em
intermediate Jacobians} and {\em Abel-Jacobi maps} which are
familiar in algebraic geometry.

The moduli space of Calabi-Yau $3$-folds is known to admit a {\em
special Kahler structure}. Equivalently, this means that the {\em
universal intermediate Jacobian} $\mathcal J$ admits a {\em hyperK\"ahler
structure}. The images in $\mathcal J$ of the Abel-Jacobi maps are
{\em Lagrangian submanifolds}. Some references for these facts
are~\cite{BG, ClII, DM2, F, Katz}. In this paper we prove
analogues of these statements for \G-manifolds. We now describe
the organization of the paper, and mention the main results of
each section.

In Section~\ref{reviewsec} we briefly review some well-known facts
about \G-manifolds, which we will need later, and discuss notation.

In Section~\ref{specialsec} we begin with the description of the
moduli space $\mathcal M$ of torsion-free \G-structures on a fixed
\G-manifold $M$, which has been studied by Joyce~\cite{J1, J4} and
Hitchin~\cite{Hi4}. We define a natural pseudo-Riemannian metric on
$\mathcal M$ which is the Hessian of a superpotential function $f$.
Finally we define the Yukawa coupling $\mathcal Y$ on $\mathcal M$,
a fully symmetric cubic tensor, and relate this to the superpotential
function $f$.

Using these data, in Section~\ref{jacobianssec} we define intermediate
\G-Jacobians and the universal intermediate \G-Jacobian $\mathcal J$ of
the moduli space and show that it admits a natural pseudo-K\"ahler
structure, with K\"ahler potential (essentially) given by $f$. Also,
the projection $\mathcal J \to \mathcal M$ is a Lagrangian fibration,
and the associated cubic form to this fibration (as described
in~\cite{DM2}) is exactly the Yukawa coupling $\mathcal Y$.

Section~\ref{abeljacobisec} discusses how special geometric structures
associated to a \G-manifold $M$ can be viewed as critical points of
certain natural functionals $\Phi_{k, \ph}$ of Chern-Simons
type. Specifically we prove that the calibrated submanifolds of $M$,
together with special connections are such critical points, as are
deformed Donaldson-Thomas connections. We study each situation
individually.

Additionally, we show that the moduli spaces of such structures can be
immersed (via Abel-Jacobi type mappings) into $\mathcal J$
isotropically with respect to the appropriate symplectic structure on
$\mathcal J$. Therefore, these images are Lagrangian subspaces
whenever they are half-dimensional. We also discuss a {\em topological
number} $\Psi_{k, \ph}$ on the moduli spaces of these structures,
and relate it to the critical points of $\Phi_{k, \ph}$.

Finally, in Section~\ref{conclusionsec} we briefly consider the
generalization of these results to the case when the cohomology of
the \G-manifold $M$ has torsion, which involves {\em gerbes}. We
also discuss questions for future study.

{\bf Acknowledgements.} The first author would like to thank Bobby Acharya for useful discussions concerning the physics of \G-manifolds, and in particular for informing him about their use of $F = -\log(f)$ as the superpotential function. The first author is also greatly indebted to Dominic Joyce for pointing out some problems with an initial draft of this article, and for useful discussions with Christopher Lin and John Loftin. The authors also thank the referee for useful comments which improved an earlier version of this article. The research of the second author is partially supported by a research RGC grant from the Hong Kong government. 

\section{Review of \G-structures} \label{reviewsec}
In this section we briefly review some facts about \G-structures. Some references for \G-structures are~\cite{Br},~\cite{J4},~\cite{K1},~\cite{K3},~\cite{L3}, and~\cite{Sa}. In addition, the first examples of compact irreducible $\G$~manifolds are described in~\cite{J1} and~\cite{J4}.

Let $M^7$ be a closed manifold with a \G-structure. The \G-structure
is given by a positive $3$-form $\ph$, which induces an orientation
and a Riemannian metric $g_{\ph}$ in a non-linear way via the formula
\begin{equation*}
(X \hk \ph) \wedge (Y \hk \ph) \wedge \ph = - 6 \, g_{\ph}(X,Y) \, \vol_{\ph}
\end{equation*}
where $\vol_{\ph}$ is the volume form corresponding to the orientation
and the metric $g_{\ph}$. Hence there is an induced Hodge star
operator $\stp$ and the associated dual $4$-form $\ps = \stp \ph$.
The $3$-form has constant pointwise norm $|\ph|^2 = 7$ with respect to
$g_{\ph}$.

The \G-structure is called {\em torsion-free} if $\ph$ is parallel
with respect to its induced metric $g_{\ph}$. In this case the
Riemannian holonomy is contained in \G, and $M^7$ is called a \G-{\em
manifold}.  It is well known that a \G-structure $\ph$ is torsion-free
if and only if $\ph$ is both closed and co-closed (with respect to
$g_{\ph}$.)

The space of forms $\Omega^*$ on a manifold with \G-structure
decomposes into \G-representations, and this decomposition
descends to the cohomology in the torsion-free case. Specifically, the
cohomology breaks up as
\begin{eqnarray*}
\HtwR & = & H^2_7 \oplus H^2_{14} \\ \HtR & = & H^3_1 \oplus H^3_7 \oplus
H^3_{27} \\ \HfR & = & H^4_1 \oplus H^4_7 \oplus H^4_{27} \\
H^5 (M, \mathbb R) & = & H^5_7 \oplus H^5_{14}
\end{eqnarray*}
We define $b^k_l = \dim(H^k_l)$. Then $b^3_1 = b^4_1 = 1$, and $b^k_7
= b_1(M)$, for $k = 2, 3, 4, 5$. The actual holonomy group $\Hol(g_{\ph})$
of $(M, g_{\ph})$ is determined by the topology of $M$. For example, the holonomy is
exactly $\G$ if and only if $\pi_1(M)$ is finite, in which case $b_1(M) = 0$ and all the $b^k_7 = 0$.
Such an $M^7$ is called {\em irreducible}.

A result which we will need repeatedly is the following.

\begin{lemma} \label{ddtpslemma}
Let $\ph_t = \ph_0 + t \eta$ be a one-parameter family of \G-structures
for small $t$. Then we have
\begin{equation} \label{ddtpseq}
\left. \ddt \right|_{t=0} ( \st_{\ph_t} \ph_t) = \frac{4}{3} \st_{\ph_0}
\pi_1 (\eta) + \st_{\ph_0} \pi_7 (\eta) - \st_{\ph_0} \pi_{27} (\eta)
\end{equation}
where $\st_{\ph_0}$ is the Hodge star induced from $\ph_0$, and $\pi_k$ is the
orthogonal projection onto $\Omega^3_k$ (defined with respect to $\ph_0$.)
\end{lemma}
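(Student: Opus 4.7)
The plan is to use linearity of both sides of~\eqref{ddtpseq} to reduce the claim to verifying it separately when $\eta$ lies in each of the three irreducible \G-components $\Omega^3_1$, $\Omega^3_7$, $\Omega^3_{27}$ of $\Omega^3$; each case will then admit a short and essentially independent argument.

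First I would dispose of $\eta = f\ph_0 \in \Omega^3_1$, where $\ph_t = (1+tf)\ph_0$ is a pointwise conformal rescaling. The defining identity $(X\hk\ph)\wedge(Y\hk\ph)\wedge\ph = -6\,g_\ph(X,Y)\,\vol_\ph$ is cubic in $\ph$ on the left and of weight $7/2$ in $g_\ph$ on the right, which forces $g_{\lambda\ph} = \lambda^{2/3}g_\ph$ for $\lambda > 0$. Combined with the usual weight $n/2-k$ of the Hodge star under a conformal rescaling of the metric (here $n=7$, $k=3$), this gives $\st_{\lambda\ph}(\lambda\ph) = \lambda^{4/3}\,\psi_\ph$, whose derivative at $\lambda=1$ is $\tfrac{4}{3}f\,\psi_0 = \tfrac{4}{3}\st_{\ph_0}\pi_1(\eta)$, producing the claimed $4/3$.

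Next I would handle $\eta \in \Omega^3_7$ by using that $\Omega^3_7$ is exactly the image of the orthogonal complement of $\lieg$ inside $\mathfrak{so}(g_{\ph_0})$ under the infinitesimal $\mathfrak{gl}(7,\R)$-action on $\ph_0$. Because the map $\ph \mapsto g_\ph$ is $\mathfrak{gl}(7,\R)$-equivariant and elements of $\mathfrak{so}(g_{\ph_0})$ preserve $g_{\ph_0}$, the metric $g_{\ph_t}$ is stationary to first order in this direction, so $\st_{\ph_t} = \st_{\ph_0} + O(t^2)$ and the derivative is just $\st_{\ph_0}\eta = \st_{\ph_0}\pi_7(\eta)$, matching coefficient~$1$.

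The remaining piece $\eta \in \Omega^3_{27}$ is where I expect the real work to lie. Under the canonical isomorphism $\Omega^3_{27} \cong S^2_0(T^*M)$ an element $\eta$ corresponds to a traceless symmetric tensor $h$, and a short calculation from the metric formula gives $\partial_t g_{\ph_t}|_{t=0} = 2h$ in the usual normalization. Writing $\partial_t (\st_{\ph_t}\ph_t)|_{t=0} = (\partial_t \st_{\ph_t})|_{t=0}\,\ph_0 + \st_{\ph_0}\eta$ and inserting the standard first-variation formula for the Hodge star on a $3$-form under a symmetric metric perturbation, the two pieces combine to $-\st_{\ph_0}\eta = -\st_{\ph_0}\pi_{27}(\eta)$, producing the $-1$ coefficient. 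The hard part will be the bookkeeping in this final case: both the identification $\Omega^3_{27} \cong S^2_0$ and the variation of $\st$ under a traceless metric change are notationally heavy, so in practice I would carry out the computation pointwise in a \G-adapted orthonormal frame where $\ph_0$ and $\psi_0$ take their canonical forms, and use the $\Omega^3_1$ case as an independent sanity check on signs.
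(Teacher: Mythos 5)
Your proposal is correct, but note that the paper itself offers no argument here: its ``proof'' of Lemma~\ref{ddtpslemma} is a pointer to~\cite{J1},~\cite{Hi4}, and~\cite{K3}. Your case-by-case argument is essentially the one carried out in~\cite{K3} (decompose $\eta$ into irreducible summands, use the correspondence $\Omega^3_1\oplus\Omega^3_{27}\cong S^2(T^*)$ and the first variation of $g_\ph$ and $\st_\ph$), so you are reconstructing the cited proof rather than inventing a new one --- which is exactly what the situation calls for. All three cases close up with the paper's normalizations. For $\Omega^3_1$: the cubic/weight count does give $g_{\lambda\ph}=\lambda^{2/3}g_\ph$, and $\st$ on $3$-forms in dimension $7$ has conformal weight $\tfrac{7}{2}-3=\tfrac12$, so $\st_{\lambda\ph}(\lambda\ph)=\lambda^{4/3}\ps$ and the coefficient $\tfrac43$ drops out. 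For $\Omega^3_7$: the orbit path $\exp(tA)\cdot\ph_0$ with $A\in\mathfrak{so}(g_{\ph_0})$ has constant metric, and since $\dot g$ depends only on $\eta=A\cdot\ph_0$, the straight-line path also has $\dot g=0$, hence $\dot\st=0$ and the coefficient is $1$. For $\Omega^3_{27}$: with the paper's convention $\dot g_{ab}=2h_{ab}$ (Corollary 3.3 of~\cite{K3}, quoted in the proof of Theorem~\ref{yukawathm}), the standard variation formula $\partial_t(\st\alpha)=\tfrac12(\operatorname{tr}_g\dot g)\st\alpha-\st(\mathcal K\alpha)$, where $\mathcal K$ acts by contracting $\dot g$ into each index, gives $(\partial_t\st)\ph_0=-\st(\mathcal K\ph_0)=-2\st_{\ph_0}\eta$ because $\mathcal K\ph_0=2\eta$ is precisely the content of~\eqref{isomeq}; adding $\st_{\ph_0}\dot\ph=\st_{\ph_0}\eta$ yields $-\st_{\ph_0}\eta$, and the trace case you already did serves as the sign check you wanted. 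So the ``hard'' case is a three-line computation once the normalizations are pinned down, and your proof is complete and self-contained in a way the paper's is not.
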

\begin{proof}
This is first mentioned in~\cite{J1}, and explicit proofs can be found
in~\cite{Hi4} and~\cite{K3}.
\end{proof}

It follows from Lemma~\ref{ddtpslemma} that if $\ps_t = \ps_0 + t \theta$
is a one-parameter family of postive $4$-forms, then
\begin{equation*}
\left. \ddt \right|_{t=0} ( \st_{\ps_t} \ps_t) = \frac{3}{4} \st_{\ph_0}
\pi_1 (\theta) + \st_{\ph_0} \pi_7 (\theta) - \st_{\ph_0} \pi_{27} (\theta)
\end{equation*}
which motivates the following definition.

\begin{defn} \label{stardefn}
Let $\ph_0$ be a fixed torsion-free \G-structure. We define the map
$\star_{\ph_0} : \Omega^k \to \Omega^{7-k}$ for $k = 3,4$ by
\begin{eqnarray} \label{stardefneq}
\text{ for } \eta \in \Omega^3; \qquad \star_{\ph_0} (\eta) & = &
\frac{4}{3} \st_{\ph_0} \pi_1 (\eta) + \st_{\ph_0} \pi_7(\eta) - \st_{\ph_0}
\pi_{27}(\eta) \\ \text{ for } \theta \in \Omega^4; \qquad \star_{\ph_0}
(\theta) & = & \frac{3}{4} \st_{\ph_0} \pi_1(\theta) + \st_{\ph_0} \pi_7 (\theta)
- \st_{\ph_0} \pi_{27}(\theta)
\end{eqnarray} 
Notice that $\star_{\ph_0}$ agrees (up to a constant) with
$\st_{\ph_0}$ on each $\Omega^k_l$, but the constants are different
on different components. Also note that $\star_{\ph_0}^2 = 1$.
\end{defn}

Finally we make some remarks about notation. We use $\langle \alpha,
\beta \rangle$ and $| \alpha |^2$ to denote the pointwise inner
product and norm on forms induced from $g_{\ph}$. We use $\llangle
\alpha, \beta \rrangle = \int_M \langle \alpha, \beta \rangle
\vol_{\ph}$ and ${\nl \alpha \nl}^2 = \llangle \alpha, \alpha
\rrangle$ to denote the corresponding $L^2$ inner product.

As $M$ is always taken to be compact, we use Hodge theory throughout,
so we will often identify a cohomology class $[\alpha]$
with its unique harmonic representative $\alpha$. We use $g = g_{\ph}$
to denote the Riemannian metric on $M^7$ associated to the
\G-structure $\ph$, and reserve $\mathcal G$ for the
metric on the moduli space $\mathcal M$ of \G-manifolds, and
$\GJ$ for the pseudo-K\"ahler metric on the universal intermediate Jacobian
$\mathcal J$. The summation convention is used throughout, and the dimension
of the moduli space  $\mathcal M$ is $b_3 = n + 1$ with indices running from
$0$ to $n$.

\section{Special Geometry of the \G-moduli space} \label{specialsec}

\subsection{The moduli space of \G-manifolds}
\label{modulisec}

We denote by $\mathcal M$ the moduli space of torsion-free
\G-structures on $M$. Explicitly, $\mathcal M$ is defined as
\begin{equation*}
\mathcal M = \left\{ \ph \in \Omega^3_+ (M): \nab{\ph} \ph = 0 \right\} /
\mathrm{Diff}_0 (M)
\end{equation*}
where $\Omega^3_+(M)$ denotes the space of \G-structures on $M$ (also
called the {\em positive} $3$-forms), and $\mathrm{Diff}_0 (M)$ is the
space of diffeomorphisms of $M$ isotopic to the identity.
This should more properly be called the {\em Teichmuller
space} of \G-structures on $M$.

\begin{rmk} \label{teichmullerrmk}
The quotient of the torsion-free \G-structures by the {\em full}
diffeomorphism group of $M$ (which is equivalent to the quotient of
$\mathcal M$ by the mapping class group of $M$) is in general an {\em
orbifold}.  As most of our discussions are local, we could equally
consider this moduli space, at least where it is smooth.
\end{rmk}

Dominic Joyce proved in~\cite{J1, J4} that $\mathcal M$ is a smooth
manifold of dimension $b_3 (M)$. Note that on a compact \G-manifold, $b_3 
\geq 1$, since $\ph$ is a non-trivial harmonic $3$-form. Therefore the moduli
space is always at least one-dimensional, and those moduli correspond to 
constant scalings of $\ph$, and hence scalings of the total volume
of $M$. In~\cite{J1, J4}, Joyce also proved the following local result.

\begin{thm}[Joyce~\cite{J1, J4}] \label{modulispacethm}
The period map
\begin{eqnarray*}
P & : & \mathcal M \rightarrow \HtR \\
P \left( \ph \right) & = & \left[ \ph \right]
\end{eqnarray*}
is a local diffeomorphism. Here $[ \ph ]$ denotes the cohomology class
of $\ph$. Hence, given $\oph \in \mathcal M$, there exists
$\epsilon > 0$ such that
\begin{equation*}
\mathcal U_{\oph} = \left\{ \ph \in \Omega^3_+ (M) :
{\nl \ph - \oph \nl}_{g_{\oph}} < \epsilon, \nab{\ph} \ph = 0
\right\} / \mathrm{Diff}_0 (M)
\end{equation*}
is diffeomorphic to an open set in the vector space $\HtR$.
\end{thm}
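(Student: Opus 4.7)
The plan is to first identify the formal tangent space to $\mathcal M$ at a point $[\bar\ph]$ with $\HtR$, in such a way that the derivative of $P$ is the natural isomorphism, and then to invoke the inverse function theorem in an appropriate Banach-space completion (with smoothness restored by elliptic regularity) to upgrade this infinitesimal statement to a local diffeomorphism, simultaneously recovering smoothness of $\mathcal M$ near $[\bar\ph]$. The concrete description of the neighborhood $\mathcal U_{\bar\ph}$ then follows from a standard slice construction for the $\mathrm{Diff}_0 (M)$-action.

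To compute the tangent space, I would take a smooth family $\ph_t = \bar\ph + t\eta + O(t^2)$ of torsion-free \G-structures. Differentiating $d\ph_t = 0$ at $t=0$ gives $d\eta = 0$, and differentiating $d \st_{\ph_t} \ph_t = 0$, together with Lemma~\ref{ddtpslemma}, gives $d \star_{\bar\ph} \eta = 0$. To factor out the infinitesimal $\mathrm{Diff}_0 (M)$-action, whose orbit tangent at $\bar\ph$ consists of forms $d(X \hk \bar\ph)$, I would impose the transversal slice condition $d^* \eta = 0$. Because $\bar\ph$ is parallel, the type projections $\pi_1$, $\pi_7$, $\pi_{27}$ commute with the Hodge Laplacian, so every harmonic $3$-form splits as a sum of type-pure harmonic components. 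It follows that $d\eta = 0$ combined with $d^* \eta = 0$ forces each type-component of $\eta$ to be harmonic, and then $d \star_{\bar\ph} \eta = 0$ is automatic, since $\star_{\bar\ph}$ coincides with $\st_{\bar\ph}$ up to nonzero scalars on each irreducible summand. Hence $T_{[\bar\ph]} \mathcal M$ is canonically identified with the space $\mathcal H^3$ of $g_{\bar\ph}$-harmonic $3$-forms, and $dP_{[\bar\ph]}$ is the Hodge-theoretic isomorphism $\mathcal H^3 \to \HtR$, $\eta \mapsto [\eta]$.

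The main obstacle is the nonlinear lift: for each class $\alpha \in \HtR$ sufficiently close to $[\bar\ph]$ one must produce an actual torsion-free \G-structure $\ph$ with $[\ph] = \alpha$, uniquely modulo $\mathrm{Diff}_0 (M)$. I would look for $\ph$ in the form $\ph = \bar\ph + \eta_H + d\beta$, where $\eta_H$ is the harmonic representative of $\alpha - [\bar\ph]$ and $\beta \in \Omega^2 (M)$ is required to satisfy the gauge $d^* \beta = 0$, so that $d\ph = 0$ and $[\ph] = \alpha$ hold automatically. The remaining equation $d \st_\ph \ph = 0$ then becomes a single nonlinear second-order elliptic PDE for $\beta$, whose linearization at $(\eta_H, \beta) = (0,0)$ is Fredholm with kernel already accounted for by the gauge and by the identification in the previous paragraph. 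An implicit function theorem argument (equivalently, the Newton-type iteration carried out by Joyce in~\cite{J1, J4}, which requires a careful estimate to control the nonlinearity of $\ph \mapsto \st_\ph \ph$) then produces a smooth solution $\beta = \beta(\eta_H)$ depending smoothly on $\eta_H$ for $\eta_H$ small, and elliptic regularity ensures $\ph$ is smooth. This yields a local smooth inverse to $P$ and the description of $\mathcal U_{\bar\ph}$ as an open subset of $\HtR$.
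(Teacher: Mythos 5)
The paper does not actually prove this theorem: it is attributed to Joyce~\cite{J1, J4} and used as a black box, so there is no internal argument to compare against. Your sketch reconstructs the general shape of Joyce's proof (linearize the two torsion-free conditions, identify the tangent space of a slice with the harmonic $3$-forms $\mathcal H^3 \cong \HtR$, then invert the period map by an implicit function theorem in Banach completions). That is the right strategy, but as written it has genuine gaps at precisely the two points where the theorem is hard.

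First, declaring $d^* \eta = 0$ a ``transversal slice condition'' assumes the crux. The slice $\{ d\eta = 0,\ d^*\eta = 0\} = \mathcal H^3$ certainly meets the $\mathrm{Diff}_0(M)$-orbit only at $0$, but transversality requires that \emph{every} infinitesimal torsion-free deformation decompose as harmonic plus an orbit direction, i.e.\ that every exact solution $d\beta$ of the linearized equation $d \star_{\oph} d\beta = 0$ be of the form $d( X \hk \oph )$. The obvious integration by parts only yields $\tfrac{4}{3} \nl \pi_1 (d\beta) \nl^2 + \nl \pi_7 (d\beta) \nl^2 - \nl \pi_{27} (d\beta) \nl^2 = 0$, which is indefinite and rules out nothing; excluding extra exact solutions is the substantive content of Joyce's argument and is exactly what forces $\dim \mathcal M = b_3$ rather than something larger. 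Second, in the nonlinear step the gauge $d^* \beta = 0$ imposed on the potential $\beta$ does not fix the diffeomorphism gauge at all: it only removes the ambiguity $\beta \mapsto \beta + (\text{closed form})$ in the potential, while for any vector field $X$ the coclosed part of $X \hk \oph$ lies in your slice and produces a nontrivial element $d( X \hk \oph )$ of the kernel of the linearization $\beta \mapsto d \star_{\oph} d\beta$. The operator you propose to invert therefore has an infinite-dimensional kernel and the implicit function theorem does not apply as stated. One must gauge-fix the $\mathrm{Diff}_0(M)$-action on $\ph$ itself --- Joyce uses a condition of the form $\pi_7 ( d^* ( \ph - \oph ) ) = 0$, which is the $L^2$-orthogonality condition to the orbit directions $d(\Omega^2_7)$ --- and then verify that the combined gauge-fixed operator is elliptic; this last point is not automatic, since the indefinite signs in $\star_{\oph}$ mean the operator is not a Laplacian and its ellipticity requires a separate check.
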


This result says $\mathcal M$ has a natural {\em affine} structure
and a {\em flat connection} $\nab{}$, and $P$ is an affine map. To see
this, let $\eta_0, \ldots, \eta_n$ be a basis of $\HtR$, where $n + 1 =
b_3 = b^3_7 + b^3_{27} + 1$. Let $x^i$ be coordinates on $\mathcal U_{\ph_0}$ with
respect to this basis. Thus $\eta = x^i \eta_i$ describes a point in
$\mathcal U_{\ph_0}$ and $\eta_i = \ddx{i}$.  Since $\mathcal
U_{\ph_0}$ is an open set in a vector space, it admits a flat
connection $\nab{} = d$, and because $\nab{} (\dx{i}) = d(\dx{i}) =
0$, we say $x^i$ are {\em flat} coordinates. We can thus cover
$\mathcal M$ by such flat charts.

On two overlapping flat charts, the coordinate systems $\tilde x^i$
and $x^i$ are affinely related: $x^i = P^i_j \tilde x^j + Q^i$, where
$P^i_j$ and $Q^i$ are constants.  Thus $\frac{\partial}{\partial
  \tilde x^j} = P^i_j \ddx{i}$, and the transition functions are
constant. Hence the flat connection $\nab{}$ is a well-defined
connection on $\mathcal M$, since if $\nab{} = d + A$ with $A = 0$ in
one flat chart, then $\tilde A = P^{-1} A P + P^{-1} dP = 0$ in all
overlapping flat charts. Such structures also arise in~\cite{Hi1, Hi2,
L2, LYZ}, for example.

There is a natural {\em pseudo-Riemannian} metric $\mathcal G$ on the
moduli space $\mathcal M$ first defined by Hitchin~\cite{Hi4} which we
now describe.

\begin{defn} \label{potentialdefn}
We define a smooth function $f: \mathcal M \to \R$ as follows:
\begin{equation} \label{potentialdefneq}
f(\ph) = \frac{3}{7} \int_M (\ph \wedge \stp \ph) =
3 \int_M \vol_{\ph}
\end{equation}
where $\ph$ is a point in $\mathcal M$. Thus, up to a constant,
$f([\ph])$ is the total volume of $M$ with respect to the metric and
orientation induced by $\ph$. We call $f$ the {\em superpotential function}
for $\mathcal M$, for reasons that will soon be clear.
\end{defn}

\begin{rmk} \label{constantrmk}
Our definition differs from Hitchin's by a constant factor, which we
choose for later convenience.
\end{rmk}

In~\cite{Hi4}, Hitchin shows that when restricted to closed
\G-structures in a fixed cohomology class, the torsion-free
\G-structures are precisely those which are critical points of $f$. We
will say more about this in Section~\ref{DTsec}.

\begin{thm}[Hitchin~\cite{Hi4}] \label{hessianmetricthm}
In a flat coordinate chart $(\mathcal U_{\ph_0}, x^0, \ldots, x^n)$,
the Hessian $f_{ij} = \frac{\partial^2 f}{\partial x^i \partial x^j}$
defines a pseudo-Riemannian metric $\mathcal G$ on $\mathcal M$ by $\mathcal G_{ij} =
f_{ij}$. In the case when $\Hol(g_{\ph}) = \G$, the metric $\mathcal G_{ij}$
is {\em Lorentzian}.
\end{thm}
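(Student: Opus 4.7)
The plan is to unpack the Hessian $f_{ij}$ at a fixed point $\ph_0 \in \mathcal M$ using Lemma~\ref{ddtpslemma}, and then read off the signature from the $\G$-type decomposition $\HtR = H^3_1 \oplus H^3_7 \oplus H^3_{27}$.

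First, I would compute $\partial_i f$. Writing $\ph_t = \ph + t\eta$ and using $f = \tfrac{3}{7}\int_M \ph \wedge \stp \ph$, Lemma~\ref{ddtpslemma} together with the pointwise identities $\langle \ph, \pi_7(\eta)\rangle = \langle \ph, \pi_{27}(\eta)\rangle = 0$ (since $\ph$ spans $\Omega^3_1$ pointwise) should collapse the calculation to the clean expression
\[
\frac{d}{dt}\Big|_{t=0} f(\ph + t\eta) = \int_M \eta \wedge \stp \ph = \int_M \eta \wedge \ps.
\]
In flat coordinates this reads $\partial_i f(\ph) = \int_M \eta_i \wedge \ps(\ph)$, where $\eta_i$ are harmonic representatives of a basis of $\HtR$; note that the expression depends only on $[\eta_i]$, since $\ps$ is closed.

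Differentiating once more at $\ph_0$ and invoking Lemma~\ref{ddtpslemma} a second time to $\partial_j \ps$, I would obtain
\[
f_{ij}|_{\ph_0} = \int_M \eta_i \wedge \left(\tfrac{4}{3} \st_{\ph_0} \pi_1(V_j) + \st_{\ph_0} \pi_7(V_j) - \st_{\ph_0} \pi_{27}(V_j)\right),
\]
where $V_j = (\partial \ph/\partial x^j)|_{\ph_0}$ represents $[\eta_j]$. The pointwise $\pi_k$-projections are not cohomological invariants, so $V_j$ differs from the harmonic representative $\eta_j$ by some exact form $d\beta$. The key observation is that in the torsion-free case the $\G$-type decomposition commutes with $\nabla$, hence with the Laplacian, so each $\pi_k(\eta_i)$ is itself harmonic and in particular co-closed. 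The ambiguous terms $\llangle \pi_k(\eta_i), \pi_k(d\beta)\rrangle = \llangle \pi_k(\eta_i), d\beta\rrangle = \llangle d^*\pi_k(\eta_i),\beta\rrangle$ therefore vanish, and I may replace $V_j$ by $\eta_j$ throughout to get
\[
f_{ij}|_{\ph_0} = \tfrac{4}{3}\llangle \pi_1(\eta_i),\pi_1(\eta_j)\rrangle + \llangle \pi_7(\eta_i),\pi_7(\eta_j)\rrangle - \llangle \pi_{27}(\eta_i),\pi_{27}(\eta_j)\rrangle.
\]
Choosing the basis $\{\eta_i\}$ adapted to $\HtR = H^3_1 \oplus H^3_7 \oplus H^3_{27}$ puts the matrix in block-diagonal form $\operatorname{diag}(\tfrac{4}{3}G_1,\, G_7,\, -G_{27})$, where each $G_k$ is the positive-definite $L^2$-Gram matrix on the corresponding summand. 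Non-degeneracy is immediate, giving a well-defined pseudo-Riemannian metric $\mathcal G$ with signature $(1+b^3_7,\, b^3_{27})$. When $\Hol(g_\ph) = \G$, $\pi_1(M)$ is finite, hence $b_1(M) = 0$ and therefore $b^3_7 = 0$, reducing the signature to $(1, b^3_{27})$, i.e., $\mathcal G$ is Lorentzian.

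The main obstacle I anticipate is the bookkeeping in the second derivative: ensuring the Hessian is well-defined in spite of the freedom in choosing $V_j$ within its cohomology class. The torsion-free hypothesis is essential precisely because it is what guarantees that the pointwise type decomposition descends to harmonic forms, which is exactly what kills the otherwise-troublesome exact contributions coming from $V_j - \eta_j$.
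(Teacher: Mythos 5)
Your argument is correct and follows essentially the same route as the paper: both compute $\partial_j f = \int_M \eta_j \wedge \stp \ph$, apply Lemma~\ref{ddtpslemma} a second time to obtain the block formula $\tfrac{4}{3}\llangle \pi_1\eta_i,\pi_1\eta_j\rrangle + \llangle \pi_7\eta_i,\pi_7\eta_j\rrangle - \llangle \pi_{27}\eta_i,\pi_{27}\eta_j\rrangle$, and read off the signature $(1+b_1,\,b^3_{27})$ from the fact that the projections commute with the Laplacian in the torsion-free case. The only (minor) difference is where the exact-form ambiguity is absorbed: the paper takes the variation direction to be harmonic and kills the term $\int_M d\beta'(0)\wedge\stp\ph$ by Stokes' theorem, whereas you hold $\eta_i$ fixed (justified by cohomological invariance of $\partial_i f$) and kill the discrepancy $V_j-\eta_j=d\beta$ using co-closedness of the harmonic projections $\pi_k(\eta_i)$ --- the two treatments are equivalent.
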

\begin{proof}
Let $\ph$ be a point in $\mathcal M$. We want to differentiate $f$
in the $\eta_j$ direction. We identify the cohomology class
$\eta_j$ with its unique harmonic
representative (with respect to the metric $g_{\ph}$ induced from $\ph$.)
Now by Lemma~\ref{ddtpslemma}, we have
\begin{eqnarray*}
\frac{\partial f}{\partial x^j} & = & \frac{3}{7} \int_M \left( \ddx{j} \ph \right)
\wedge \stp \ph + \frac{3}{7} \int_M \ph \wedge \left( \ddx{j} \stp \! \ph \right) \\
& = & \frac{3}{7} \int_M \eta_j \wedge \stp \ph
+ \frac{3}{7} \int_M \ph \wedge \left( \frac{4}{3} \stp \pi_1
(\eta_j) + \stp \pi_7 (\eta_j) - \stp \pi_{27} (\eta_j)
\right)\\ & = & \frac{3}{7} \int_M \pi_1 (\eta_j) \wedge \stp \ph +
\frac{4}{7} \int_M \ph \wedge \stp \pi_1 (\eta_j) \\ & = & \int_M
\pi_1 (\eta_j) \wedge \stp \ph = \int_M \eta_j \wedge \stp \ph
\end{eqnarray*}
where we have used the fact that the splitting of $\Omega^3$ is orthogonal
with respect to $g_{\ph}$.
To compute the second derivative, let $\ph(t)$ be a one-parameter family of
torsion-free \G-structures with $\ph(0) = \ph$, satisfying
$\ddt \ph(t) = \eta_i (t)$, where $\eta_i(t)$ is the unique harmonic
representative of the cohomology class $\eta_i$ with respect to the metric $g_t$
induced from $\ph(t)$. Then $\eta_j (t) = \eta_j (0) + d \beta(t)$ for some
smooth family of $2$-forms $\beta(t)$. Hence $\ddt \eta_j(t) = d \beta'(t)$ is
exact.

Writing $\eta_j (0) = \eta_j$, we compute the second derivative at $\ph$ as
\begin{eqnarray*}
& & \frac{\partial^2 f}{\partial x^i \partial x^j} = \int_M \left( \left. \ddt
\right|_{t=0} \eta_j(t) \right) \wedge \st_{\ph} \ph + \int_M \eta_j \wedge \left(
\left. \ddt \right|_{t=0} \st_{\ph(t)} \ph(t) \right) \\
& = & \int_M d\beta'(0) \wedge \st_{\ph} \ph + 
\int_M \eta_j \wedge \left( \frac{4}{3} \stp \pi_1 (\eta_i)
+ \stp \pi_7 (\eta_i) - \stp \pi_{27} (\eta_i) \right) \\
& = & \int_M \left( \frac{4}{3} \pi_1 (\eta_i) \wedge 
\stp \pi_1 (\eta_j) + \pi_7 (\eta_i) \wedge 
\stp \pi_7 (\eta_j) - \pi_{27} (\eta_i) \wedge 
\stp \pi_{27} (\eta_j) \right) \\ & = & \frac{4}{3}
\llangle \pi_1 (\eta_i), \pi_1 (\eta_j) \rrangle
+ \llangle \pi_7 (\eta_i), \pi_7 (\eta_j) \rrangle
- \llangle \pi_{27} (\eta_i), \pi_{27} (\eta_j) \rrangle
\end{eqnarray*}
where the first term in the second line above vanishes by Stokes' theorem
since $\stp \ph$ is closed and $M$ is compact.

The Laplace and Green's operators commute with the projections when
$\ph$ is torsion-free, so each $\pi_k (\eta_j)$ is
harmonic. Therefore we see that $f_{ij}$ is a pseudo-Riemannian metric
of signature $(b^3_1 + b^3_7, b^3_{27}) = (1 + b_1, b^3_{27})$.

When $M$ is irreducible (so the holonomy is exactly \G),
we know $b_1 = 0$. Since $H^1 \cong H^3_7$, there are no harmonic
$\wths$-forms, and thus in this case
\begin{equation} \label{hessianmetriceq}
f_{ij} = \frac{4}{3} \llangle \pi_1 (\eta_i), \pi_1 (\eta_j)
\rrangle - \llangle \pi_{27} (\eta_i), \pi_{27} (\eta_j) \rrangle
\end{equation}
which is a Lorentzian metric of signature $(1, b^3_{27})$.
\end{proof}

\begin{rmk} \label{metricwelldefinedrmk}
We defined this metric using
a particular flat coordinate chart in a neighbourhood of each point.
However, this metric $\mathcal G$ is well-defined globally on $\mathcal M$.
This is because with respect to any other overlapping flat coordinate
chart $(\tilde x^0, \ldots \tilde x^n)$, the two sets of coordinates
are affinely related: $x^i = P^i_j \tilde x^j + Q^i$. Therefore
$\frac{\partial}{\partial \tilde x^k} = P^i_k \ddx{i}$, and also
$\frac{\partial^2 f}{\partial \tilde x^k \partial \tilde x^l} =
P^i_k P^j_l \frac{\partial^2 f}{\partial x^i \partial x^j}$. This is
exactly the statement that the metric $\mathcal G$ is well-defined.
\end{rmk}

\begin{rmk} \label{physicsrmk}
In the physics literature, a slightly different notion of
superpotential function is used: they define $F = -\log(f)$ and use
the Hessian of $F$ to define a metric on $\mathcal M$. The advantage
of this definition is that the metric $F_{ij}$ is actually {\em
Riemannian} when $M$ is irreducible, as opposed to
Lorentzian. However, with this choice of potential function and
metric, other results fail to hold. See
Remarks~\ref{physics2rmk},~\ref{physics3rmk}, and~\ref{physics4rmk}
for more details. It is for these reasons that we
prefer to use $f$ and its associated Lorentzian metric.
\end{rmk}

We close this section by noting that we can write the metric $\mathcal G$ on
$\mathcal M$ more concisely using the operator $\star_{\ph}$ defined
in~\eqref{stardefneq} as
\begin{equation} \label{starmetriceq}
\mathcal G(\eta_1, \eta_2) = \int_M \eta_1 \wedge \star_{\ph} \eta_2
\end{equation}
for $\eta_i \in T_{\ph} \mathcal M = \HtR$. We stress again that
$\star_{\ph}$ is not the same as $\st_{\ph}$, and thus $\mathcal G$ is not the
usual $L^2$ metric on forms. This metric $\mathcal G$ is more
natural than the usual $L^2$ metric, because the fact that it is of
Hessian type allows us to construct a pseudo-K\"ahler structure on the
universal intermediate Jacobian in Section~\ref{universalsec}.

\subsection{The Yukawa Coupling} \label{yukawasec}

In this section we define the {\em Yukawa coupling} $\mathcal Y$,
a symmetric cubic tensor on the \G-moduli space $\mathcal M$. We relate this
tensor to the superpotential function $f$ and the metric $\mathcal G$ on
$\mathcal M$. We will need the following identity, which can be found
in Lemma A.12 of~\cite{K3}:
\begin{equation} \label{g2identitieseq}
\ph_{ijk} \ph_{abc} g^{kc} = g_{ia} g_{jb} - g_{ib} g_{ja} - \ps_{ijab}
\end{equation}
We recall some facts about the space $\Omega^3 = \wtho \oplus \wths
\oplus \wtht$ of $3$-forms on a \G-manifold. We adopt the notation of~\cite{K3}.
The elements $\eta \in \Omega^3_1 \oplus \Omega^3_{27}$ are in one-to-one correspondence
with symmetric $2$-tensors $h_{ij} \in S^2 (T^*)$. The correspondence is given by
\begin{equation} \label{isomeq}
\eta_{ijk} = h_{il} g^{lm} \ph_{mjk} + h_{jl} g^{lm} \ph_{imk} + h_{kl} g^{lm} \ph_{ijm}
\end{equation}
where $\wtho$ corresponds to multiples of the metric $g$ and $\wtht$
corresponds to the {\em traceless} symmetric tensors. Note that the
$3$-form $\ph$ itself corresponds to the symmetric tensor $\frac{1}{3}
g_{ij}$.

From now on we assume that $M$ is irreducible, so $\HtR = H^3_1 \oplus
H^3_{27}$. Each cohomology class has a unique harmonic representative, so for a fixed
torsion-free \G-structure $\ph$ with metric $g_{\ph}$, each class
$\eta_k \in \HtR$ corresponds to a symmetric $2$-tensor $h_k$.

\begin{defn} \label{yukawadefn}
The {\em Yukawa coupling} $\mathcal Y$ is a fully symmetric cubic tensor on the moduli space
$\mathcal M$, defined as follows. Let $\eta_i$, $\eta_j$, $\eta_k$ be elements
of $T_{\ph} \mathcal M \cong \HtR$, with associated symmetric tensors $h_i$, $h_j$, $h_k$,
with respect to $g_{\ph}$. Then we define
\begin{equation} \label{yukawadefneq}
\mathcal Y (\eta_i, \eta_j, \eta_k) = \int_M (h_i)^{a\alpha} (h_j)^{b\beta} (h_k)^{c\gamma}
\phi_{abc} \phi_{\alpha \beta \gamma} \vol
\end{equation}
where $g$ and $\vol$ are with respect to $\ph$, and $(h_i)^{a\alpha}$
means $(h_i)_{b\beta} g^{ba} g^{\beta \alpha}$, raising indices with
$g$. It is clear that $\mathcal Y$ is fully symmetric in its arguments.
\end{defn}

In Section~\ref{universalsec}, we will see that the Yukawa coupling
$\mathcal Y$ is essentially the natural cubic form associated to a
Lagrangian fibration.

Fix $\ph$ in $\mathcal M$. Let $\eta_0, \eta_1, \ldots, \eta_n$ be a basis
for $T_{\ph} \mathcal M \cong \HtR$, where $\eta_0 = \ph$ and $\eta_i \in H^3_{27}$ for
$i \neq 0$.

\begin{prop} \label{yukawafirstprop}
The following relations between the Yukawa coupling
$\mathcal Y$, the superpotential function $f$, and the Hessian metric $\mathcal G_{ij}
= f_{ij}$ hold:
\begin{eqnarray} \label{Y000eq}
\mathcal Y (\ph, \ph, \ph) & = & \frac{14}{27} \, f(\ph) \\
\label{Y00ieq} \mathcal Y (\ph, \ph, \eta_i) & = & 0 \\
\label{Y0ijeq} \mathcal Y (\ph, \eta_i, \eta_j) & = & \frac{1}{6} \, \mathcal G_{ij} (\ph)
\end{eqnarray}
where $i, j = 0, \ldots, n$, and $i \neq 0$ in~\eqref{Y00ieq}.
\end{prop}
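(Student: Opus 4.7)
The plan is to exploit the fact that, via the correspondence~\eqref{isomeq}, the $3$-form $\ph$ itself corresponds to the symmetric $2$-tensor $\tfrac{1}{3}g$. Consequently, each slot of $\mathcal{Y}$ occupied by $\ph$ replaces an $h^{a\alpha}$ in~\eqref{yukawadefneq} by $\tfrac{1}{3}g^{a\alpha}$, and each of the three identities reduces to a pointwise contraction of $\ph_{abc}\ph_{\alpha\beta\gamma}$ with inverse metrics, which is exactly what~\eqref{g2identitieseq} is designed to evaluate.

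For~\eqref{Y000eq}, substituting three copies of $\tfrac{1}{3}g$ produces $\tfrac{1}{27}\int_M \ph_{abc}\ph^{abc}\,\vol$; since $|\ph|^2 = 7$ with the standard $\tfrac{1}{3!}$ normalisation one has $\ph_{abc}\ph^{abc} = 42$, so the integral equals $\tfrac{14}{9}\int_M\vol$, which matches $\tfrac{14}{27}f(\ph)$ by Definition~\ref{potentialdefn}. For~\eqref{Y00ieq} I would contract~\eqref{g2identitieseq} once more with $g^{ia}$; the $\ps$-term drops out because a symmetric $g^{ia}$ is paired with a $\ps$ that is antisymmetric in $i,a$, yielding $\ph^{ab}{}_{c}\ph_{ab\gamma} = 6\,g_{c\gamma}$. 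Plugging this back collapses the integrand to a multiple of $\tr(h_i)$, which vanishes for $i\neq 0$ because $h_i$ is trace-free.

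For~\eqref{Y0ijeq}, a single application of~\eqref{g2identitieseq}, after the cyclic index rearrangement allowed by $\ph$ being a $3$-form, gives
\begin{equation*}
g^{a\alpha}\ph_{abc}\ph_{\alpha\beta\gamma} = g_{b\beta}g_{c\gamma} - g_{b\gamma}g_{c\beta} - \ps_{bc\beta\gamma}.
\end{equation*}
Contracting with $h_i^{b\beta}h_j^{c\gamma}$, the first term contributes $\tr(h_i)\tr(h_j)$, the $\ps$-term vanishes because $h_i^{b\beta}$ is symmetric in $b,\beta$ while $\ps_{bc\beta\gamma}$ is antisymmetric in them, and the middle term reorganises into $-\langle h_i, h_j\rangle$. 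Integrating gives $\mathcal{Y}(\ph,\eta_i,\eta_j) = -\tfrac{1}{3}\int_M \langle h_i, h_j\rangle\,\vol$ for $i,j\neq 0$.

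The main obstacle is matching this expression with $\tfrac{1}{6}\mathcal{G}_{ij}$. By Theorem~\ref{hessianmetricthm}, in the irreducible case $\mathcal{G}_{ij} = -\llangle \eta_i,\eta_j\rrangle$ whenever $\eta_i,\eta_j\in H^3_{27}$, so the required link is the pointwise identity $\langle \eta_i,\eta_j\rangle = 2\langle h_i,h_j\rangle$. This is the scale factor in the isometry $\wtht \cong S^2_0(T^*)$ defined by~\eqref{isomeq}, proved by expanding $(\eta_i)_{abc}(\eta_j)^{abc}$ through~\eqref{isomeq} and then applying~\eqref{g2identitieseq} to collapse the resulting $\ph\ph$ terms; this computation can be quoted from~\cite{K3}. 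Finally, the remaining cases of~\eqref{Y0ijeq} are quick consistency checks: $i=j=0$ recovers~\eqref{Y000eq} via $\mathcal{G}_{00} = \tfrac{4}{3}\nl\ph\nl^2 = \tfrac{28}{3}\int_M\vol$, while $i=0,\ j\neq 0$ reduces to~\eqref{Y00ieq} because $\llangle\ph,\eta_j\rrangle = 0$ by orthogonality of the type decomposition.
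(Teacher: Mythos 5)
Your proposal is correct and follows essentially the same route as the paper: substitute $h_0 = \tfrac{1}{3}g$ into~\eqref{yukawadefneq}, collapse the $\ph\ph$ contraction via~\eqref{g2identitieseq} to get $\tfrac{1}{3}\int_M(\tr(h_i)\tr(h_j)-\tr(h_ih_j))\,\vol$, and then match against $\mathcal G_{ij}$ using the norm comparison $\llangle\eta_i,\eta_j\rrangle = \int_M(\tr(h_i)\tr(h_j)+2\tr(h_ih_j))\,\vol$ quoted from~\cite{K3}. The only difference is organizational --- you verify~\eqref{Y000eq} and~\eqref{Y00ieq} by direct contraction and treat the mixed cases of~\eqref{Y0ijeq} separately, whereas the paper proves~\eqref{Y0ijeq} uniformly for all $i,j$ via the decomposition $h = \lambda g + h^0$ and deduces the other two identities from it; all your intermediate constants check out.
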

\begin{proof}
First we show that~\eqref{Y000eq} and~\eqref{Y00ieq} follow from~\eqref{Y0ijeq}.
Equation~\eqref{Y00ieq} is automatic since $\mathcal G_{0i} = 0$ for $i \neq 0$
by~\eqref{hessianmetriceq}. Also~\eqref{hessianmetriceq} shows $\mathcal G_{00} = 
\frac{4}{3} \int \ph \wedge \st \ph = \frac{28}{9} f (\ph)$.
Then~\eqref{Y0ijeq} says $\mathcal Y (\ph, \ph, \ph) = \frac{1}{6} \mathcal G_{00}
= \frac{14}{27} f(\ph)$.

To establish~\eqref{Y0ijeq}, consider $h_k = \frac{1}{3} g$ in~\eqref{yukawadefneq} and
use~\eqref{g2identitieseq}. We obtain:
\begin{eqnarray} \nonumber
\mathcal Y (\ph, \eta_i, \eta_j) & = & \frac{1}{3} \int_M (h_i)^{a\alpha}
(h_j)^{b \beta} \ph_{abc} \ph_{\alpha \beta \gamma} g^{c \gamma} \vol \\ \nonumber
& = & \frac{1}{3} \int_M (h_i)^{a \alpha} (h_j)^{b \beta} \left( g_{a \alpha} g_{b \beta}
 - g_{a \beta} g_{b \alpha} - \ps_{ab\alpha \beta} \right) \vol \\
\label{yukawafirstproptempeq} & = & \frac{1}{3} \int_M \left( \tr(h_i) \tr(h_j) -
\tr(h_i h_j) \right) \vol
\end{eqnarray}
where $(h_i h_j)_{ab} = (h_i)_{al} g^{lm} (h_j)_{mb}$ denotes matrix multiplication.
The third term vanished by the symmetry of the $h$'s and the skew-symmetry of $\ps$.
Let $h_i = \lambda g + h^0_i$ and $h_j = \mu g + h^0_j$ where $h^0_i$ and $h^0_j$
are the trace-free parts. Substituting these into~\eqref{yukawafirstproptempeq} gives
\begin{equation} \label{yukawafirstproptempeq2}
\mathcal Y(\ph, \eta_i, \eta_j) = \frac{1}{3} \int_M \left( 42 \, \lambda \mu -
\tr(h^0_i h^0_j) \right) \vol
\end{equation}
In Proposition 2.15 of~\cite{K3}, it is shown that for $\eta_i, \eta_j \in \wtho \oplus
\wtht$, we have
\begin{eqnarray} \nonumber
\int_M (\eta_i \wedge \st \eta_j) \vol & = & \int_M \left( \tr(h_i) \tr(h_j) +
2 \tr(h_i h_j) \right) \vol \\ \label{L2metriceq} & = & \int_M \left( 63 \, \lambda \mu
+ 2 \tr(h^0_i h^0_j) \right) \vol
\end{eqnarray}
By~\eqref{hessianmetriceq}, the metric $\mathcal G_{ij} = f_{ij}$ is given by
\begin{equation*}
\mathcal G_{ij} = \int_M \eta_i \wedge \st \bar \eta_j \vol
\end{equation*}
where $\bar \eta_j$ corresponds to the symmetric tensor $\bar  h_j = \frac{4}{3} \mu g
- h^0_j$. Substituting $\mu \to \frac{4}{3} \mu$ and $h^0_j \to - h^0_j$
in~\eqref{L2metriceq} shows
\begin{equation} \label{yukawafirstproptempeq3}
\mathcal G_{ij} = \int_M \left( 84 \, \lambda \mu - 2 \tr(h^0_i h^0_j ) \right) \vol
\end{equation}
Comparing~\eqref{yukawafirstproptempeq2} and~\eqref{yukawafirstproptempeq3}, shows
$\mathcal Y (\ph, \eta_i, \eta_j) = \frac{1}{6} \, \mathcal G_{ij}$.
\end{proof}

From the above proposition, the natural question which arises is
whether $\mathcal Y(\eta_i, \eta_j, \eta_k)$ is related to
$f_{ijk}$. The main theorem of this section is the following, which
shows that this is indeed the case.

\begin{thm} \label{yukawathm}
Let $\eta_i, \eta_j, \eta_k$ be in $T_{\ph} \mathcal M \cong \HtR$. Then
\begin{equation} \label{Yijkeq}
\mathcal Y (\eta_i, \eta_j, \eta_k) = \frac{1}{2} \, f_{ijk} = \frac{1}{2} \,
\frac{\partial^3 f}{\partial x^i \partial x^j \partial x^k}
\end{equation}
for $i,j,k = 0, \ldots, n$. 
\end{thm}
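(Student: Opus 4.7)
The plan is to differentiate the formula $f_{jk}(\ph) = \int_M \eta_j \wedge \star_\ph \eta_k$, derived inside the proof of Theorem~\ref{hessianmetricthm}, once more in the $\eta_i$ direction. Exactly as in that proof, when $\ph$ moves with tangent $\eta_i$ the harmonic representatives $\eta_j,\eta_k$ change only by exact $3$-forms; and since $\ph$ is torsion-free the Laplacian commutes with the \G-projections $\pi_l$, so $\star_\ph \eta_k$ (and similarly $\star_\ph \eta_j$) is harmonic, hence closed. The contributions from the moving harmonic representatives therefore drop out by Stokes' theorem, and I am left with
\begin{equation*}
f_{ijk}(\ph) = \int_M \eta_j \wedge (\partial_i \star_\ph)\, \eta_k.
\end{equation*}

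The core step is then to evaluate $\partial_i \star_\ph$ applied to a harmonic $3$-form. Since $\star_\ph = \tfrac{4}{3}\st_\ph \pi_1 + \st_\ph \pi_7 - \st_\ph \pi_{27}$, this requires the variation of the Hodge star $\st_\ph$ (produced by the metric variation $\partial_i g_{ab} = 2(h_i)_{ab}$ via~\eqref{isomeq}) together with the variation of the projections $\pi_l^\ph$ themselves; both are standard and recorded in~\cite{K3}. Unpacking in components, the integrand should reduce to a cubic contraction of the symmetric tensors $h_i, h_j, h_k$ with two copies of $\ph$, plus auxiliary terms carrying a $\ps$ or a trace. Identity~\eqref{g2identitieseq} is then used to trade contractions of the form $\ph_{\ast\ast c}\ph_{\ast\ast\gamma}g^{c\gamma}$ for $g$-factors and a $\ps$-factor; the $\ps$-piece dies against the symmetric tensors, the trace terms cancel, and any residual contributions from the $\wths$-component must vanish since $b^3_7 = 0$ in the irreducible case. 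What remains should match exactly $2\mathcal Y(\eta_i,\eta_j,\eta_k)$.

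As a consistency check the degree-$\tfrac{7}{3}$ homogeneity of $f$ gives $x^i f_i = \tfrac{7}{3} f$, and differentiating twice yields $x^i f_{ijk} = \tfrac{1}{3} f_{jk}$, so specialising one of the arguments to $\ph$ recovers~\eqref{Y0ijeq} of Proposition~\ref{yukawafirstprop}. The genuine content is therefore the case $\eta_i,\eta_j,\eta_k \in H^3_{27}$, and the main obstacle is the algebraic cleanup of $\partial_i \star_\ph$: the combined variation of the Hodge star and of the projections generates a large number of terms, and it is not a priori obvious that the non-Yukawa pieces conspire to cancel. If the direct route becomes too cluttered, a cleaner alternative is polarisation: both $\tfrac{1}{2}f_{ijk}$ and $\mathcal Y$ are fully symmetric trilinear forms on $T_\ph \mathcal M$, so the identity reduces to the diagonal case $f_{iii} = 2\mathcal Y(\eta_i,\eta_i,\eta_i)$, which one obtains by Taylor-expanding $f(\ph + t\eta_i) = 3\int_M \vol_{\ph + t\eta_i}$ to order $t^3$ through iterated use of Hitchin's variation formula $\partial_t \vol_\ph = \tfrac{1}{3}\langle \eta_i, \ph\rangle \vol$ and reading off the cubic coefficient pointwise with the help of~\eqref{g2identitieseq}.
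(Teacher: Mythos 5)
Your outline correctly identifies where the content of the theorem lives, but it stops exactly at that point: the decisive step --- evaluating $\int_M \eta_j \wedge (\partial_i \star_{\ph})\,\eta_k$ --- is never carried out, and you concede yourself that ``it is not a priori obvious that the non-Yukawa pieces conspire to cancel.'' This is precisely the computation the paper declares too complicated and deliberately avoids: differentiating $\star_{\ph} = \tfrac{4}{3}\st_{\ph}\pi_1 + \st_{\ph}\pi_7 - \st_{\ph}\pi_{27}$ requires the variation of the projection operators $\pi_l$ themselves, not just of the metric and the Hodge star, and no formula for $\partial_i(\star_{\ph}\eta)$ on a general harmonic $3$-form is available in the paper (Lemma~\ref{ddtpslemma} only gives the variation of $\st_{\ph_t}\ph_t$, not of $\st_{\ph_t}$ acting on a fixed form). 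The preliminary reductions you do perform are sound --- the Stokes argument killing the terms from the moving harmonic representatives, the Euler-relation check $x^i f_{ijk} = \tfrac{1}{3}f_{jk}$ recovering~\eqref{Y0ijeq}, and the remark that polarisation reduces everything to the diagonal --- but the polarisation alternative merely relocates the same difficulty into the third-order Taylor expansion of $\int_M \vol_{\ph+t\eta}$, which again is not performed. As it stands the proposal is a plan, not a proof.

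The idea you are missing is the paper's substitution $F = -\log(f)$. Lemma~\ref{yukawatemplemma1} shows that $F_{ij} = \tfrac{1}{f}\llangle \eta_i,\eta_j\rrangle$ is, up to the factor $\tfrac{1}{f}$, the \emph{ordinary} $L^2$ inner product: the projections and the modified operator $\star_{\ph}$ disappear entirely. Its derivative in the $x^k$ direction then needs only the elementary variations $\partial_k g^{ab} = -2(h_k)^{ab}$ and $\partial_k \vol = \tr(h_k)\,\vol$, after which the purely algebraic Lemma~\ref{yukawatemplemma2} (an exercise with~\eqref{isomeq} and~\eqref{g2identitieseq}) identifies the resulting contraction $\int_M (\eta_i)_{abc}(\eta_j)_{\alpha\beta\gamma}(h_k)^{a\alpha}g^{b\beta}g^{c\gamma}\vol$ as $2\,\mathcal Y(\eta_i,\eta_j,\eta_k)$ plus explicit trace terms. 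Converting back via $f_{ijk} = -fF_{ijk} - \tfrac{2}{f^2}f_if_jf_k + \tfrac{1}{f}\left(f_if_{jk}+f_jf_{ki}+f_kf_{ij}\right)$ and checking the four cases according to how many indices equal $0$ shows that all the extra terms cancel. If you want to salvage your direct route you would have to actually derive $\partial_i \star_{\ph}$ on harmonic forms, which amounts to redoing this same computation in a considerably less convenient form.
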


Before we can prove this theorem, we need a couple of preliminary
results. The reason is that it is difficult to directly differentiate
$f_{ij} = \int_M \eta_i \wedge \star_{\ph} \eta_j$ because of the
complicated nature of the operator $\star_{\ph}$.  We circumvent this
difficulty by considering the auxiliary function $F = -\log(f)$
instead, because it will turn out that the third derivative of $F$ is
easily computable.

\begin{lemma} \label{yukawatemplemma1}
Let $F = - \log(f)$. Then the Hessian
$F_{ij} = \frac{\partial^2 F}{\partial x^i \partial x^j}$
is given by
\begin{equation} \label{yukawatemplemma1eq}
F_{ij} = \frac{1}{f} \int_M \eta_i \wedge \stp \eta_j = \frac{1}{f} \llangle \eta_i, 
\eta_j \rrangle
\end{equation}
when $M$ is irreducible.
\end{lemma}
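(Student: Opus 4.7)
The plan is to prove this by direct computation using the chain rule applied to $F = -\log(f)$, together with the formulas for $f_i$ and $f_{ij}$ already derived in the proof of Theorem~\ref{hessianmetricthm}. Explicitly, since $F = -\log(f)$, the chain rule gives
\begin{equation*}
F_{ij} = -\frac{f_{ij}}{f} + \frac{f_i f_j}{f^2}.
\end{equation*}
So the task reduces to showing that this combination equals $\frac{1}{f} \llangle \eta_i, \eta_j \rrangle$, or equivalently that
\begin{equation*}
f \cdot \llangle \eta_i, \eta_j \rrangle = \frac{f_i f_j}{1} - f \cdot f_{ij} \cdot (-1)
\end{equation*}
simplifies correctly; in other words that the discrepancy between $\mathcal G_{ij} = f_{ij}$ and the genuine $L^2$ inner product $\llangle \eta_i, \eta_j \rrangle$ is accounted for exactly by the rank-one correction $f_i f_j / f$.

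The key observation driving this is that since $M$ is irreducible, $\HtR = H^3_1 \oplus H^3_{27}$, and $H^3_1$ is one-dimensional, spanned by $\ph$ itself. Thus I would write $\pi_1(\eta_i) = \lambda_i \ph$ for some scalar $\lambda_i$. Using $|\ph|^2 = 7$ one sees $\llangle \ph, \ph \rrangle = 7 \int_M \vol_\ph = \tfrac{7}{3} f$, and since $\ph \in H^3_1$ is orthogonal to the $27$-component, the formula $f_i = \int_M \eta_i \wedge \stp \ph$ (from the proof of Theorem~\ref{hessianmetricthm}) reduces to $f_i = \lambda_i \cdot \tfrac{7}{3} f$, giving $\lambda_i = \tfrac{3 f_i}{7 f}$ and hence
\begin{equation*}
\llangle \pi_1(\eta_i), \pi_1(\eta_j) \rrangle = \lambda_i \lambda_j \cdot \tfrac{7f}{3} = \frac{3 f_i f_j}{7 f}.
\end{equation*}

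Next I would substitute this into the irreducible version of the Hessian formula \eqref{hessianmetriceq}, namely
\begin{equation*}
f_{ij} = \tfrac{4}{3} \llangle \pi_1(\eta_i), \pi_1(\eta_j) \rrangle - \llangle \pi_{27}(\eta_i), \pi_{27}(\eta_j) \rrangle = \frac{4 f_i f_j}{7 f} - \llangle \pi_{27}(\eta_i), \pi_{27}(\eta_j) \rrangle,
\end{equation*}
and compare with the genuine $L^2$ pairing
\begin{equation*}
\llangle \eta_i, \eta_j \rrangle = \llangle \pi_1(\eta_i), \pi_1(\eta_j) \rrangle + \llangle \pi_{27}(\eta_i), \pi_{27}(\eta_j) \rrangle = \frac{3 f_i f_j}{7 f} + \llangle \pi_{27}(\eta_i), \pi_{27}(\eta_j) \rrangle,
\end{equation*}
where the cross terms vanish by orthogonality of the decomposition. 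Plugging into $-f_{ij}/f + f_i f_j / f^2$, the $\pi_{27}$ terms survive with the correct sign, and the coefficients $-\tfrac{4}{7} + 1 = \tfrac{3}{7}$ on the $f_i f_j / f^2$ piece match the $\pi_1$ contribution to $\tfrac{1}{f}\llangle \eta_i, \eta_j \rrangle$ exactly.

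There is no serious obstacle here; the only subtlety is bookkeeping the constants and ensuring that the sign flip on the $\pi_{27}$ component in the Lorentzian metric $\mathcal G$ is precisely what gets undone by the $f_i f_j / f^2$ correction coming from $\log$. The proof essentially amounts to the observation that the sign indefiniteness of $\mathcal G$ is a rank-one modification (coming from the volume direction $\ph \in H^3_1$) of the positive-definite $L^2$ pairing, and passing from $f$ to $F = -\log f$ is the natural way to absorb that modification.
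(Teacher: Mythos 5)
Your proof is correct and follows essentially the same route as the paper: both rest on the chain-rule identity $F_{ij} = f_i f_j/f^2 - f_{ij}/f$ combined with the first- and second-derivative formulas from the proof of Theorem~\ref{hessianmetricthm}; the paper simply specializes to the adapted basis $\eta_0 = \ph$, $\eta_i \in H^3_{27}$ and checks the cases $(0,0)$, $(i,0)$, $(i,j)$ separately, whereas you handle all tangent vectors at once by writing $\pi_1(\eta_i) = \lambda_i \ph$ with $\lambda_i = 3f_i/(7f)$. One cosmetic remark: your displayed restatement $f \cdot \llangle \eta_i, \eta_j \rrangle = f_i f_j - f \cdot f_{ij} \cdot (-1)$ has a sign slip (it should read $f \llangle \eta_i, \eta_j \rrangle = f_i f_j - f f_{ij}$), but the computation you actually carry out afterwards uses the correct signs and the coefficients $-\tfrac{4}{7}+1=\tfrac{3}{7}$ do match.
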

\begin{proof}
In this proof, subscripts on $f$ or $F$ always denote partial
differentiation. We evaluate all derivatives at a fixed point $\ph$ in
$\mathcal M$. As before fix a basis $\eta_0, \ldots, \eta_n$ of
$\HtR$ so that $\eta_0 = \ph$. From the proof of Theorem~\ref{hessianmetricthm},
and our choice of basis, we see that at the point $\ph$ in $\mathcal M$,
\begin{equation} \label{fitempeq}
f_0 = \frac{7}{3} f \qquad \qquad  f_i = 0 \quad \text{for } i \neq 0 
\end{equation}
and also that
\begin{equation} \label{fijtempeq}
f_{00} = \frac{28}{9} f \qquad f_{i0} = 0 \qquad f_{ij} = -\int \langle \eta_i,
\eta_j \rangle \vol \quad \text{for } i,j \neq 0
\end{equation}
From $F = -\log(f)$ we have $F_i = -f^{-1} f_i$ and
\begin{equation} \label{Fijeq}
F_{ij} = \frac{1}{f^2} f_i f_j - \frac{1}{f} f_{ij}
\end{equation}
Substituting the above expressions, we see that at the point $\ph$ in $\mathcal
M$, we have
\begin{equation*}
F_{00} = \frac{7}{3} = \frac{1}{f} \int \ph \wedge \stp \ph \qquad F_{i0} = 0
\qquad F_{ij} = + \frac{1}{f} \int \eta_i \wedge \stp \eta_j \quad
\text{for } i,j \neq 0
\end{equation*}
and hence in all cases we get
\begin{equation*}
F_{ij} = \frac{1}{f} \int_M \eta_i \wedge \stp \eta_j 
= \frac{1}{f} \llangle \eta_i, \eta_j \rrangle \qquad i,j = 0, \ldots, n
\end{equation*}
as claimed.
\end{proof}

\begin{lemma} \label{yukawatemplemma2}
Let $\eta_1$, $\eta_2$, and $\eta_3$ be $3$-forms in $\wtho \oplus \wtht$, with associated
symmetric tensors $h_1$, $h_2$, and $h_3$, respectively. Then
\begin{multline} \label{yukawatemplemma2eq}
\int_M (\eta_1)_{ijk} (\eta_2)_{abc} (h_3)^{ia} g^{jb} g^{kc} \vol \, \, \, = \, \, 2 \, \mathcal Y(\eta_1, \eta_2, \eta_3) \\ + 2 \int_M \left( \tr(h_1) \tr(h_2 h_3) + \tr(h_2) \tr(h_3 h_1) + \tr(h_3) \tr(h_1 h_2) \right) \vol
\end{multline}
\end{lemma}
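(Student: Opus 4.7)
The approach is direct computation. Working pointwise in a local orthonormal frame (so $g = \delta$), I substitute the identity~\eqref{isomeq} for both $\eta_1$ and $\eta_2$ in the integrand of the LHS. This produces nine scalar terms indexed by $(p,q)\in\{1,2,3\}^2$: the index $p$ labels which of the three slots of the internal $\ph$ inside $\eta_1$ absorbs the auxiliary index coming from $h_1$, and similarly $q$ records the role of $h_2$ inside $\eta_2$.

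In seven of the nine cases the two internal $\ph$'s share a direct metric contraction over at least one pair of their own indices. After using the antisymmetry of $\ph$ to move the shared indices into the last slot of each $\ph$, identity~\eqref{g2identitieseq} reduces the $\ph\ph$ block to a sum of two $g\,g$ products plus a single $\ps$-term. In every one of these seven cases, two of the four indices of the $\ps$-term are contracted against a symmetric factor --- either $(h_1)_{\bullet\bullet}$, $(h_2)_{\bullet\bullet}$, or $(h_3)^{\bullet\bullet}$ --- so the $\ps$-term vanishes by the usual symmetric-against-antisymmetric cancellation. The surviving $g\,g$ contributions simplify to a multiple of $\tr(h_1 h_2 h_3)$ plus, when $(p,q) \neq (1,1)$, a single term of the form $\tr(h_r)\tr(h_s h_t)$ with $\{r,s,t\}=\{1,2,3\}$.

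In the two remaining cases, $(p,q) = (2,3)$ and $(p,q) = (3,2)$, the two internal $\ph$'s are not directly contracted by either of $g^{jb}, g^{kc}$; instead each slot of one $\ph$ is paired with a slot of the other through one of the three $h_i$'s, and inspection (using the cyclic symmetry of $\ph$ and~\eqref{yukawadefneq}) shows that each of these two terms equals the pointwise integrand of $\mathcal Y(\eta_1,\eta_2,\eta_3)$.

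Summing the nine contributions: the $(1,1)$ case yields $+6\,\tr(h_1 h_2 h_3)$ via $\ph_{ljk}\ph_{mjk} = 6\,\delta_{lm}$, while each of the other six trace-producing cases contributes $-\tr(h_1 h_2 h_3)$, and these $\tr(h_1 h_2 h_3)$-coefficients cancel. The remaining trace contributions combine (two of each of the three flavours) to give $2\int_M [\tr(h_1)\tr(h_2 h_3) + \tr(h_2)\tr(h_3 h_1) + \tr(h_3)\tr(h_1 h_2)]\vol$, while the two Yukawa cases sum to $2\mathcal Y(\eta_1,\eta_2,\eta_3)$, producing the claimed identity. The main obstacle is purely bookkeeping --- keeping track of the nine terms, verifying the $\tr(h_1 h_2 h_3)$ coefficients cancel, and recognising $(2,3)$ and $(3,2)$ as the Yukawa-producing cases. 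The one conceptual observation that keeps the $\ps$-terms from being a headache is that in every case where~\eqref{g2identitieseq} is applied, two of $\ps$'s four antisymmetric indices are paired against a symmetric $h$-factor.
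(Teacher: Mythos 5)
Your proposal is correct and follows essentially the same route as the paper: substitute~\eqref{isomeq} for both $\eta_1$ and $\eta_2$, reduce the resulting nine contractions (the paper groups them into five by symmetry), apply~\eqref{g2identitieseq} to the directly-contracted $\ph\ph$ blocks with the $\ps$-terms killed by symmetric-against-antisymmetric pairing, and observe that the $\tr(h_1h_2h_3)$ coefficients cancel while the two uncontracted cases reproduce the Yukawa integrand. Your bookkeeping of the nine $(p,q)$ cases, the $6\,\tr(h_1h_2h_3)$ versus six copies of $-\tr(h_1h_2h_3)$, and the identification of $(2,3)$ and $(3,2)$ as the Yukawa terms all match the paper's computation.
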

\begin{proof}
We use~\eqref{isomeq} and exploit the symmetry of $h_1$, $h_2$, $h_3$,
$g$, and the skew-symmetry of $\ph$ to compute:
\begin{eqnarray*}
(\eta_1)_{ijk} (\eta_2)_{abc} (h_3)^{ia} g^{jb} g^{kc} & = & (h_1)_{il} g^{lm} \ph_{mjk}
(\eta_2)_{abc} (h_3)^{ia} g^{jb} g^{kc} \\ & & {} + 2 \, 
(h_1)_{jl} g^{lm} \ph_{imk} (\eta_2)_{abc} (h_3)^{ia} g^{jb} g^{kc}
\end{eqnarray*}
and then
\begin{eqnarray*}
(\eta_1)_{ijk} (\eta_2)_{abc} (h_3)^{ia} g^{jb} g^{kc} & = & (h_1)_{il} (h_2)_{a\alpha}
g^{lm} g^{\alpha \beta} \ph_{mjk} \ph_{\beta bc} (h_3)^{ia} g^{jb} g^{kc} \\ & &  {} + 
2 \, (h_1)_{il} (h_2)_{b\alpha} g^{lm} g^{\alpha \beta} \ph_{mjk} \ph_{a \beta c}
(h_3)^{ia} g^{jb} g^{kc} \\ & & {} + 2 \, (h_1)_{jl} (h_2)_{a\alpha} g^{lm} g^{\alpha \beta}
\ph_{imk} \ph_{\beta bc} (h_3)^{ia} g^{jb} g^{kc} \\ & & {} + 2 \,
(h_1)_{jl} (h_2)_{b\alpha} g^{lm} g^{\alpha \beta} \ph_{imk} \ph_{a \beta c}
(h_3)^{ia} g^{jb} g^{kc} \\ & & {} + 2 \, (h_1)_{jl} (h_2)_{c\alpha} g^{lm} g^{\alpha \beta}
\ph_{imk} \ph_{a b \beta} (h_3)^{ia} g^{jb} g^{kc}
\end{eqnarray*}
We use the identity~\eqref{g2identitieseq} repeatedly, and simplify.
The right hand side becomes
\begin{eqnarray*}
& & 6 \, \tr(h_1 h_2 h_3) + 2  \left( \tr(h_2) \tr(h_3 h_1) - \tr(h_2 h_3 h_1) \right) \\
& & {}+ 2 \left( \tr(h_1) \tr(h_2 h_3) - \tr(h_1 h_2 h_3) \right) + 
2 \left( \tr(h_3) \tr(h_1 h_2) - \tr(h_3 h_1 h_2) \right) \\ & & {} +
2 (h_1)^{ia} (h_2)^{jb} (h_3)^{kc} \ph_{ijk} \ph_{abc}
\end{eqnarray*}
which, upon further simplification and integration over $M$, is
exactly~\eqref{yukawatemplemma2eq}.
\end{proof}

\begin{proof}[Proof of Theorem~\ref{yukawathm}]
We begin by differentiating~\eqref{Fijeq} with respect to $x^k$ and
rearranging to obtain
\begin{equation} \label{fijkeq}
f_{ijk} = -f F_{ijk} - \frac{2}{f^2} f_i f_j f_k + \frac{1}{f} f_i f_{jk}
+ \frac{1}{f} f_j f_{ki} + \frac{1}{f} f_k f_{ij}
\end{equation}
We need to compute $F_{ijk}$. Using Lemma~\ref{yukawatemplemma1}, this is
\begin{eqnarray} \nonumber
F_{ijk} & = & -\frac{1}{f^2} f_k \int_M \eta_i \wedge \stp \eta_j + \frac{1}{f}
\int_M \left( \ddx{k} \eta_i \right) \wedge \stp \eta_j \\ \label{yukawathmtempeq} & & {}+
\frac{1}{f} \int_M \left( \ddx{k} \eta_j \right) \wedge \stp \eta_i + \frac{1}{f}
\int_M \eta_i \wedge \left( \ddx{j} \stp \right) \eta_j
\end{eqnarray}
As in the proof of Theorem~\ref{hessianmetricthm}, we have $\ddx{k} \eta_i = d\beta'_i$
is exact. Since the $\eta_i$'s are harmonic with respect to $g_{\ph}$, we have that
$\stp \eta_i$ is closed, and hence by Stokes' theorem the second and third terms above
both vanish. The fourth term of~\eqref{yukawathmtempeq} is
\begin{equation} \label{yukawathmtempeq2}
\frac{1}{f} \int_M \ddx{k} \left( g_{\ph} (\eta_i, \eta_j) \right) \vol + \frac{1}{f}
\int_M g_{\ph} (\eta_i, \eta_j) \ddx{k} \vol
\end{equation}
where we regard $\eta_i$ and $\eta_j$ as constant. In local coordinates, we have
\begin{equation*}
g_{\ph} (\eta_i, \eta_j) = \frac{1}{6} \, (\eta_i)_{abc} (\eta_j)_{\alpha\beta\gamma}
g^{a\alpha} g^{b\beta} g^{c\gamma}
\end{equation*}
We are differentiating in the $x^k$ direction, which means that we are considering
a one-parameter family $\ph(t)$ of torsion-free \G-structures, such that
$\ddt \ph(t) = \eta_k (t)$, which at $t=0$ corresponds to a symmetric $2$-tensor $h_k$.
From Corollary 3.3 of~\cite{K3}, such an infinitesmal variation induces
\begin{equation*}
\ddx{k} g^{ab} = -2 \, (h_k)^{ab} \qquad \qquad \ddx{k} \vol = \tr(h_k) \, \vol
\end{equation*}
as the variations of $g^{ab}$ and $\vol$, respectively. Substituting these results
and simplifying expression~\eqref{yukawathmtempeq2}, equation~\eqref{yukawathmtempeq}
becomes
\begin{eqnarray} \nonumber
F_{ijk} & = & -\frac{1}{f^2} f_k \int_M \eta_i \wedge \stp \eta_j 
+ \frac{1}{f} \int_M \tr(h_k) \, \eta_i \wedge \stp \eta_j \\ \label{yukawathmtempeq3}
& & {} +  \frac{1}{f} \int_M \left( - (\eta_i)_{abc} (\eta_j)_{\alpha\beta\gamma}
(h_k)^{a\alpha} g^{b\beta} g^{c\gamma} \right) \vol 
\end{eqnarray}
Now there are two cases. If $k \neq 0$, then $\tr(h_k) = 0$ and $f_k = 0$
from~\eqref{fitempeq}. Whereas if $k=0$, then $h_0 = \frac{1}{3} g$, so $\tr(h_0) = 
\frac{7}{3}$, and $f_0 = \frac{7}{3} f$ by~\eqref{fitempeq}. Thus in all cases the
combination of the first two terms of~\eqref{yukawathmtempeq3} vanishes. Now applying
Lemma~\ref{yukawatemplemma2}, we finally obtain:
\begin{eqnarray*}
-f F_{ijk} & = & 2 \, \mathcal Y(\eta_1, \eta_2, \eta_3) \\ & & {}+
2 \int_M \left( \tr(h_i) \tr(h_j h_k) +
\tr(h_j) \tr(h_k h_i) + \tr(h_k) \tr(h_i h_j) \right) \vol
\end{eqnarray*}
Substituting this into~\eqref{fijkeq} gives
\begin{eqnarray} \label{fijkfinaleq}
f_{ijk} & = &  2 \, \mathcal Y(\eta_1, \eta_2, \eta_3)
- \frac{2}{f^2} f_i f_j f_k + \frac{1}{f} f_i f_{jk}
+ \frac{1}{f} f_j f_{ki} + \frac{1}{f} f_k f_{ij} \\ \nonumber & & {}+
2 \int_M \left( \tr(h_i) \tr(h_j h_k) +
\tr(h_j) \tr(h_k h_i) + \tr(h_k) \tr(h_i h_j) \right) \vol
\end{eqnarray}

{\em Case 1: $i=j=k=0$.} Then since $h_0 = \frac{1}{3} g$, we have
$\tr(h_0) = \frac{7}{3}$ and $\tr(h_0 h_0) = \frac{7}{9}$, and $f_0 = \frac{7}{3} f$ and
$f_{00} = \frac{28}{9} f$ by~\eqref{fitempeq} and~\eqref{fijtempeq}.
Thus~\eqref{fijkfinaleq} is
\begin{eqnarray*}
f_{000} & = & 2 \, \mathcal Y(\eta_0, \eta_0, \eta_0) - \frac{2}{f^2} {\left(
\frac{7}{3} f \right)}^3  + \frac{3}{f} \left( \frac{7}{3} f \right) \left(
\frac{28}{9} f \right) + 6 \left( \frac{7}{3} \right) \left( \frac{7}{9} \right)
\int_M \vol \\ & = & 2 \, \mathcal Y(\eta_0, \eta_0, \eta_0) - \frac{686}{27} f
+ \frac{196}{9} f + \frac{294}{81} f = 2 \, \mathcal Y(\eta_0, \eta_0, \eta_0)
\end{eqnarray*}
where we have used $f = 3 \int \vol$.

{\em Case 2: $j=k=0$, $i \neq 0$.} We have $\tr(h_i) = 0$ and $\tr(h_i h_0) = 0$.
Also $f_i = 0$ and $f_{i0} = 0$ by~\eqref{fitempeq} and~\eqref{fijtempeq}.
Thus all the extra terms vanish and~\eqref{fijkfinaleq} in this case says
\begin{equation*}
f_{i00} = 2 \, \mathcal Y(\eta_i, \eta_0, \eta_0)
\end{equation*}

{\em Case 3: $k=0$, $i,j \neq 0$.} In this case the only non-vanishing terms on the
right hand side of~\eqref{fijkfinaleq}, besides the $\mathcal Y$ term, are the last
terms on each line. This time we get
\begin{equation*}
f_{ij0} =  2 \, \mathcal Y(\eta_i, \eta_j, \eta_0) + \frac{7}{3} f_{ij} + \frac{14}{3}
\int_M \tr(h_i h_j) \, \vol 
\end{equation*}
Since $h_i$ and $h_j$ are traceless, equation~\eqref{yukawafirstproptempeq3} shows
that the last two terms cancel, and hence
\begin{equation*}
f_{ij0} =  2 \, \mathcal Y(\eta_i, \eta_j, \eta_0)
\end{equation*}

{\em Case 4: $i,j,k \neq 0$.} As in Case 2, all the extra terms of~\eqref{fijkfinaleq}
vanish, leaving
\begin{equation*}
f_{ijk} =  2 \, \mathcal Y(\eta_i, \eta_j, \eta_k)
\end{equation*}
thus finally completing the proof of Theorem~\ref{yukawathm}.
\end{proof}

\begin{rmk} \label{physics2rmk}
Note that Proposition~\ref{yukawafirstprop} and Theorem~\ref{yukawathm}
give more reasons to prefer the superpotential function $f$ that we are
considering, rather than $F = -\log(f)$, since it is clear from their proofs
that both $F_{ij}$ and $F_{ijk}$ are not as simple and natural as $f_{ij}$
and $f_{ijk}$. See also Remarks~\ref{physicsrmk},~\ref{physics3rmk},
and~\ref{physics4rmk}.
\end{rmk}

\section{Intermediate Jacobians in \G-geometry} \label{jacobianssec}

We begin with the following observation. All of the special geometric
structures that exist on a \G-manifold $M$ are defined using both the
parallel $3$-form $\ph$ and its associated Hodge-dual parallel
$4$-form $\ps = \stph$. These include associative and coassociative
submanifolds of $M$, as well as Donaldson-Thomas bundles on $M$.  A
discussion of these structures is in Section~\ref{abeljacobisec}. For
this reason, we consider the {\em pair} $(\ph, \stph) \in \HtR \oplus
\HfR$ as representing the \Gs\ corresponding to $\ph$. In this context
we have the following result of Joyce.

\begin{prop}[Joyce~\cite{J4}] \label{lagprop}
By Poincar\'e duality, $\HfR \cong \HtR^*$, so the space
$\HtR \oplus \HfR$ has a natural symplectic structure. Define the
subset $\mathcal L$ of $\HtR \oplus \HfR$ by
\begin{equation*}
\mathcal L = \{ ([\ph], [\stph]): \ph \text{ is a torsion-free \Gs} \}
\end{equation*}
Then $\mathcal L$ is a {\em Lagrangian submanifold} of $\HtR \oplus \HfR$ with
respect to the natural symplectic structure.
\end{prop}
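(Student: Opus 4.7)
The plan is to identify $\HtR \oplus \HfR$ with the cotangent bundle $T^*\HtR$ via Poincar\'e duality, and then show that $\mathcal L$ is precisely the graph of the exact one-form $df$, where $f$ is the superpotential of Definition~\ref{potentialdefn}. Once this is done, the Lagrangian property is immediate from the symmetry of mixed partials.

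First I would fix a basis $\eta_0,\ldots,\eta_n$ of $\HtR$ together with its Poincar\'e dual basis $\eta^0,\ldots,\eta^n$ of $\HfR$ (normalized so that $\int_M \eta_i \wedge \eta^j = \delta_i^j$), and introduce linear coordinates $\alpha = x^i \eta_i$ on $\HtR$ and $\beta = y_i \eta^i$ on $\HfR$. In these coordinates the natural symplectic form becomes the canonical one, $\omega = dx^i \wedge dy_i$, corresponding to the skew pairing $\omega\bigl((\alpha_1,\beta_1),(\alpha_2,\beta_2)\bigr) = \int_M \alpha_1 \wedge \beta_2 - \int_M \alpha_2 \wedge \beta_1$. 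The dimension and smoothness of $\mathcal L$ then follow at once from Joyce's Theorem~\ref{modulispacethm}: the first projection $\mathcal L \to \HtR$ agrees with the period map $P$ and is therefore a local diffeomorphism, so $x^0,\ldots,x^n$ serve as local coordinates on $\mathcal L$ and $\dim \mathcal L = b_3 = \tfrac12 \dim(\HtR \oplus \HfR)$.

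The key step is to read off the $y_i$-coordinate along $\mathcal L$. This is essentially already done inside the proof of Theorem~\ref{hessianmetricthm}, where the first derivative calculation gives
\begin{equation*}
\frac{\partial f}{\partial x^i}(\ph) \; = \; \int_M \eta_i \wedge \stp \ph,
\end{equation*}
and the right-hand side is by definition the coefficient $y_i$ of $[\stp \ph]$ in the chosen dual basis. Therefore along $\mathcal L$ one has $y_i = \partial f/\partial x^i$, so that $\mathcal L$ is the graph of the exact one-form $df$ over the open set $P(\mathcal M) \subset \HtR$. Pulling $\omega$ back to $\mathcal L$ then yields
\begin{equation*}
\omega \big|_{\mathcal L} \; = \; dx^i \wedge d\!\left( \frac{\partial f}{\partial x^i} \right) \; = \; f_{ij}\, dx^i \wedge dx^j \; = \; 0
\end{equation*}
by the symmetry of the Hessian $f_{ij}$, which is precisely the Lagrangian property. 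I do not expect a real obstacle here: the substantive work was carried out in proving Theorem~\ref{hessianmetricthm}, and what remains is the classical observation that the graph of $df$ in a cotangent bundle is Lagrangian, repackaged through the Poincar\'e duality identification.
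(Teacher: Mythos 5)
Your proposal is correct and follows essentially the same route as the paper: both identify $\HtR \oplus \HfR$ with $T^*\HtR$ via Poincar\'e duality, invoke the first-derivative computation from the proof of Theorem~\ref{hessianmetricthm} to show $df|_{[\ph]} = [\stph]$, and conclude that $\mathcal L$ is the graph of $df$ and hence Lagrangian. You merely spell out the coordinate details and the final pullback computation that the paper leaves implicit.
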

\begin{proof}
In the proof of Theorem~\ref{hessianmetricthm}, we showed that
\begin{equation*}
\left. \frac{\partial f}{\partial t} \right|_{\ph} =
\int_M \frac{\partial \ph}{\partial t} \wedge \stph
\end{equation*}
which says that $df|_{[\ph]} = [\stph]$ under the isomorphism
$\HtR^* = \HfR$. Therefore $\mathcal L$ is locally of the form
$([\ph], df|_{[\ph]})$, the graph of the gradient of a real function $f$.
Hence $\mathcal L$ is Lagrangian.
\end{proof}

\begin{rmk} \label{lagrmk}
It is clear that Theorem~\ref{modulispacethm} can be generalized to show
that the maps $([\ph], [\stp \ph]) \mapsto [\ph] \in \HtR$ and
$([\ph], [\stp \ph]) \mapsto [\stp \ph] \in \HfR$ are local diffeomorphisms.
In fact, by projection onto the first factor, we see $\mathcal L$ is
diffeomorphic to $\mathcal M$, the \G-moduli space.
\end{rmk}

\subsection{Intermediate \G-Jacobians} \label{intermediatesec}

We first explain the motivation for introducing (intermediate)
Jacobians.  For an algebraic curve $C$, we associate to it
the Jacobian $\mathcal J \left( C \right) = H^{1,0} \left( C
\right) \backslash H^1 \left( C, \mathbb C \right) / H^1 \left(
C,\mathbb Z \right)$, which is an Abelian variety. It determines the curve $C$ itself.  Similarly,
for any complex threefold $X$, we define its intermediate Jacobian to
be the Abelian variety
\begin{equation*}
\mathcal J \left( X \right) = \left( H^{3,0} \left( X \right) + H^{2,1} \left( X \right) \right) \backslash H^3
\left( X,\mathbb C \right) / H^3 \left( X,\mathbb Z \right),
\end{equation*}
and it captures much of the algebraic structure of $X$, especially when $X$
is a Calabi-Yau threefold. Now we will define a similar object, a flat
torus which encodes the information of a torsion-free \G-structure on $M^7$.

From now on we assume that the cohomology of $M$ has no torsion.
If not, we need to consider {\em gerbes}. See Section~\ref{conclusionsec}
for a discussion of the general case. We use the notation
\begin{equation} \label{toruscohomeq}
H^k(M, S^1) = H^k(M, \R) / H^k(M, \Z)
\end{equation}
which is topologically a torus of dimension $b_k(M)$. Now consider the splitting
\begin{equation*}
\HtR \oplus \HfR = H' \oplus H''
\end{equation*}
where we define
\begin{eqnarray*}
H' & = & (1 + \star_{\ph}) \HtR = \{ (C, \star_{\ph} C ): C \in \HtR \} \\
H'' & = & (1 - \star_{\ph}) \HtR  = \{ (C, -\star_{\ph} \! C ): C \in \HtR \} = (H')^{\perp}
\end{eqnarray*}
where $\star_{\ph}$ is the operator of Definition~\ref{stardefn}.
Recall we are implicitly identifying a cohomology class $[C]$ with
its harmonic representative.
Clearly we have $H'' \cong \HtR \oplus \HfR / H'$.
Now let $\Lambda = (1 - \star_{\ph}) \HtZ$ be the image of the lattice $\HtZ$ under the
projection onto $H''$. 

\begin{defn} \label{intermediatejacobiandefn}
We define the intermediate \G-Jacobian $\mathcal J_{\ph}$ by
\begin{eqnarray} \nonumber
\mathcal J_{\ph} & = & (1 + \star_{\ph}) \HtR \backslash \HtR \oplus \HfR / (1 - \star_{\ph})
\HtZ \\ \label{intermediatejacobiandefneq} & = &  H'' / \Lambda
\end{eqnarray}
for each $\ph \in \mathcal M$.
\end{defn}

By projection onto the second factor, we have the isomorphism
\begin{equation} \label{intermediatejacobiandefneq2}
\mathcal J_{\ph} \cong \HfR / \star_{\ph} \!\! \HtZ
\end{equation}
Equation~\eqref{intermediatejacobiandefneq2} says that we can think of $\mathcal
J_{\ph}$ as a quotient of $\HfR$ by a varying lattice which detects
the \G-structure.

For a Calabi-Yau $3$-fold $X^6$ or a \G-manifold $M^7$, the complement
of the tangent space to the intermediate Jacobian in $H^3(X, \C)$
or $\HtR \oplus \HfR$, respectively, is the tangent space to the local
moduli space of deformations. In the Calabi-Yau case, the space
$H^{3,0} \oplus H^{2,1}$ is the infinitesmal moduli space of
deformations of the holomorphic volume form $\Omega$ (which
corresponds to deformations of complex structures and volume
scalings). In the \G\ case, this complement is the set
$H' = \{ (\eta, \star_{\ph} \eta): \eta \in \HtR \}$ which by
Lemma~\ref{ddtpslemma} and Remark~\ref{lagrmk} is exactly
the tangent space at $\ph$ to the moduli space of torsion-free
\G-structures.

We can define a similar object $\tilde{\mathcal J}_{\ph}$ by
\begin{eqnarray} \nonumber
\tilde{\mathcal J}_{\ph} & = & (1 + \star_{\ph}) \HfR \backslash \HfR
\oplus \HtR / (1 - \star_{\ph}) \HfZ \\ \label{intermediatejacobiandefneq3}
& \cong & \HtR / \star_{\ph} \!\! \HfZ
\end{eqnarray}

The two tori $\mathcal J_{\ph}$ and $\tilde{\mathcal J}_{\ph}$ are
dual to each other. We can use either one to define the universal
intermediate \G-Jacobian in Section~\ref{universalsec}. In the former
case the resulting space will be locally isomorphic to $T^* \mathcal
M$, and in the latter case to $T \mathcal M$. Both viewpoints will be
needed, depending on whether we are considering the associative or
coassociative moduli. This is discussed in Sections~\ref{instantonssec}
and~\ref{branessec}.

\subsection{The Universal Intermediate \G-Jacobian}
\label{universalsec}

We now consider all the intermediate \G-Jacobians $\mathcal J_{\ph}$
for all torsion-free \G-structures $\ph$ in $\mathcal M$.

\begin{defn} \label{universaldefn}
We define the {\em universal intermediate \G-Jacobian} $\mathcal J$ to
be the fibre bundle over $\mathcal M$ whose fibre over a point $\ph$
in $\mathcal M$ is the intermediate Jacobian $\mathcal J_{\ph}$
associated to $\ph$. That is,
\begin{equation*}
\mathcal J = \{ (\ph, \mathcal J_{\ph}); \ph \in \mathcal M \}
\end{equation*}
\end{defn}

\begin{rmk} \label{Cfieldrmk}
We also call $\mathcal J$ the moduli
space of torsion-free \G-structures, together with {\em C-fields}.
The elements of the fibre $\mathcal J_{\ph}$ over each point $\ph$ in
$\mathcal M$ are called C-fields with respect to $\ph$ in the
physics literature.
\end{rmk}

By the isomorphism~\eqref{intermediatejacobiandefneq2}, we see that
topologically, $\mathcal J$ is isomorphic to $\mathcal M \times \HfS$.
Locally, $\HfS$ is of course isomorphic to $\HfR$, and therefore
$\mathcal J$ is locally isomorphic to $\mathcal M \times \HfR \cong T^* \mathcal M$,
since $H^4 = (H^3)^{\ast}$ by Poincar\'e duality.

\begin{thm} \label{bigmodulispacekahlerthm}
The universal intermediate Jacobian $\mathcal J$ admits the structure of a
pseudo-K\"ahler manifold.
\end{thm}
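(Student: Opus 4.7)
The plan is to exploit the affine flat structure on $\mathcal M$ from Theorem~\ref{modulispacethm} together with the Hessian form $\mathcal G_{ij} = f_{ij}$ of its metric to equip the total space $\mathcal J$ with an integrable complex structure, using the superpotential $f$ itself as a global K\"ahler potential. This is the standard mechanism by which the (co)tangent bundle of an affine manifold equipped with a Hessian metric acquires a canonical K\"ahler structure, familiar from the special K\"ahler setting of~\cite{DM2, F}.

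First, on a flat chart $(\mathcal U_{\ph_0}, x^0, \ldots, x^n)$ associated to an integral basis $\eta_0, \ldots, \eta_n$ of $\HtZ$, I would introduce real fibre coordinates $y^0, \ldots, y^n$ on $\HfR$ by expanding in the $\ph$-dependent basis $\star_{\ph} \eta_0, \ldots, \star_{\ph} \eta_n$. In these coordinates the lattice $\star_{\ph} \HtZ$ appearing in~\eqref{intermediatejacobiandefneq2} becomes the standard integer lattice $\mathbb Z^{n+1}$, and $(x^i, y^i)$ are smooth local coordinates on $\mathcal J$. I then declare $z^j = x^j + \sqrt{-1}\, y^j$ to be local holomorphic coordinates, defining an almost complex structure $J$ whose matrix is constant in each chart and hence manifestly integrable.

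Next I would verify that $J$ is independent of the flat chart and descends to the fibrewise torus quotient. On an overlap with a second integral flat chart one has $x^i = P^i_j \tilde x^j + Q^i$ with $P^i_j \in GL(b_3, \mathbb Z)$ (cf.~Remark~\ref{metricwelldefinedrmk}), and the linearity of $\star_{\ph}$ forces the fibre bases $\star_\ph \eta_i$ and $\star_\ph \tilde\eta_j$ to be related by the same integer matrix, so $y^i = P^i_j \tilde y^j$ and hence $z^i = P^i_j \tilde z^j + Q^i$ is complex-affine and in particular holomorphic. Fibrewise, translation by a lattice vector acts by an integer shift of the $y^i$, which commutes with the constant-coefficient $J$; hence $J$ descends to $\mathcal J$.

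For the Hermitian form I would set $K = f \circ \pi$ where $\pi : \mathcal J \to \mathcal M$ is the projection, and define $\omega = c \sqrt{-1}\, \partial \bar \partial K$ for a convenient normalization constant $c$. Since $K$ is independent of the $y^i$, a direct computation using $\partial_{z^i} = \tfrac{1}{2}(\partial_{x^i} - \sqrt{-1}\, \partial_{y^i})$ shows that $\omega$ is proportional to $\sqrt{-1}\, f_{jk}\, dz^j \wedge d\bar z^k$, whose associated Hermitian form $\GJ$ restricts to $\mathcal G$ on both the horizontal and vertical distributions and vanishes on mixed components; by Theorem~\ref{hessianmetricthm} it has signature $(2(1 + b^3_7), 2 b^3_{27})$. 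Closedness of $\omega$ is automatic since it is the $\partial \bar\partial$ of a global function. I expect the main technical point to be the coherence check of the preceding paragraph: it is essential both that integer cohomology bases restrict the chart transitions on $\mathcal M$ to lie in $GL(b_3, \mathbb Z)$ and that the $\ph$-dependence of the fibre lattice $\star_{\ph} \HtZ$ is absorbed into the definition of the fibre coordinates themselves, so that the complex structure assembles coherently out of data that individually depend on $\ph$.
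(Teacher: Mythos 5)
Your construction is correct, but it is not the route taken in either of the paper's two designated proofs. The paper's Proof 1 starts from the canonical symplectic form $\omega = dx^i \wedge dy_i$ on $\mathcal J \cong T^* \mathcal M$ and obtains the complex structure only after passing to the Legendre-transformed coordinates $x_k = \partial f / \partial x^k$, so that the K\"ahler potential ends up being the Legendre transform $\hat f$ of $f$ rather than $f$ itself; Proof 2 instead lifts $\mathcal G$ via the flat connection and verifies integrability of the compatible $J$ by checking that the $(1,0)$ vector fields close under Lie bracket (Newlander--Nirenberg), which again uses the Hessian property $\del_l \mathcal G_{kj} = \del_k \mathcal G_{lj}$. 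What you build --- holomorphic coordinates $z^j = x^j + i y^j$ with $y^j$ the untransformed fibre coordinate, and $\omega = c\, i\, \del \delbar (f \circ \pi)$ --- is the \emph{second} pseudo-K\"ahler structure $(\tilde \omega, \tilde J, \tilde{\mathcal G}_{\mathcal J})$ that the paper describes only after Proposition~\ref{kahlerpotentialprop}, where it notes $\tilde \omega = 2 i \del \delbar f$ and that the two structures are exchanged by the Legendre transform; your trivialization of the fibre $\HfR$ by the $\ph$-dependent basis $\star_{\ph} \eta_i$ is exactly the identification of $\mathcal J_{\ph}$ with $\tilde{\mathcal J}_{\ph}$, i.e.\ of $T^* \mathcal M$ with $T \mathcal M$ via $\star_{\ph}$. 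Your approach buys a cleaner statement (the superpotential $f$ is literally the K\"ahler potential, and integrability is free since $J$ has constant coefficients in the affine coordinates), at the cost of having to check by hand that the symplectic form is nondegenerate and that the fibre trivialization is coherent; the paper's Proof 1 buys nondegeneracy and closedness of $\omega$ for free from the cotangent-bundle structure, at the cost of the Legendre transform. One genuine merit of your write-up is the explicit attention to the $GL(b_3, \Z)$ integrality of the transition matrices and to absorbing the $\ph$-dependence of the lattice $\star_{\ph} \HtZ$ into the fibre coordinates --- a point the paper passes over when it asserts that the canonical coordinates ``descend'' to the torus quotient. The one detail you should still record is that $\ph \mapsto \star_{\ph} \eta_i$ is smooth (it involves the harmonic representative of $\eta_i$ with respect to $g_{\ph}$ and the projections $\pi_k$, both of which vary smoothly with $\ph$ by the ellipticity arguments underlying Theorem~\ref{modulispacethm}), since otherwise your chart on the total space is not obviously smooth.
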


We are going to give two proofs of
Theorem~\ref{bigmodulispacekahlerthm}, to illustrate clearly the
relationships between the various structures.

\begin{proof}[Proof 1]
Let $\mathcal U_{\oph}$ be a flat coordinate chart in $\mathcal M$
with flat coordinates $x^0, \ldots, x^n$. Then the canonical
coordinates $x^0, \ldots, x^n, y_0, \ldots, y_n$ on $T^* \mathcal
U_{\oph}$ descend to coordinates on the restriction of $\mathcal J$
to $\mathcal U_{\oph}$, where we use the same letter $y_i$ to denote
the angle coordinates on the fibre. Let
\begin{equation*}
\omega = dx^i \wedge dy_i
\end{equation*}
be the canonical symplectic form on $\mathcal J$ defined using
the local isomorphism with $T^* \mathcal M$. It is easy to see that
this is globally well-defined on $\mathcal J$.

Now we define new local coordinates
$x_0, \ldots , x_n$ on the base $\mathcal U_{\oph}$ by
\begin{equation} \label{legendreeq}
x_k = \frac{\partial f}{\partial x^k}
\end{equation}
where $f$ is the superpotential function. These are the new coordinates
which are used to define the Legendre transform of the function $f$.
To see that these coordinates are well defined, observe
that~\eqref{legendreeq} implies
\begin{equation} \label{legendreeq2}
dx_k = f_{kj} \dx{j}
\end{equation}
where $f_{kj} = \frac{\partial^2 f}{\partial x^k \partial x^j}$ is
invertible. Note that the right hand side is closed, since $f_{kjl} =
f_{klj}$, and thus it is exact in $\mathcal U_{\oph}$ (we can assume
that the flat coordinate charts are contractible.) Therefore the new
coordinates $x_k$ are well defined up to a constant, which we can
fix by demanding that the origin corresponds to $\oph$, the origin
with respect to the original coordinates. Now
\begin{equation*}
\frac{\partial}{\partial x_0}, \ldots, \frac{\partial}{\partial x_n},
\frac{\partial}{\partial y_0}, \ldots \frac{\partial}{\partial y_n}
\end{equation*}
is a local basis of coordinate vector fields on $\mathcal J$.
We define an endomorphism $J$ of the tangent bundle of
$\mathcal J$ by
\begin{equation*}
J \left( \frac{\partial}{\partial x_k} \right) =
\frac{\partial}{\partial y_k} \qquad \qquad 
J \left( \frac{\partial}{\partial y_k} \right) =
-\frac{\partial}{\partial x_k} \qquad \qquad k = 0, \ldots, n
\end{equation*}
It is a simple exercise to check that if $x^i = P^i_j \tilde x^j +
Q^j$ for two overlapping flat coordinate charts, then $d\tilde x_k
= P^i_k d x_i$ and $d\tilde y_k = P^i_k d y_i$, hence
$J$ is well-defined globally on $\mathcal J$. This was the reason for
considering the coordinate transformation from $x^i$ to $x_i$. This is
the Legendre transform corresponding to $f$, as described for example
in~\cite{L2}. It is clear that $J^2 = -1$ and that in fact $J$ is an
{\em integrable} complex structure on $\mathcal J$, since $z_k = x_k +
i y_k$ are local holomorphic coordinates. Now let us define a metric
$\GJ$ on $\mathcal J$ by the compatibility relation
\begin{equation} \label{kahlercompatibilityeq}
\GJ (X, Y) = \omega(X, JY)
\end{equation}
It follows easily from $J^2 = -1$ and~\eqref{kahlercompatibilityeq}
that $\GJ$ is $J$-invariant. From~\eqref{legendreeq2}
it is clear that
\begin{equation*}
\ddx{k} = f_{kj} \frac{\partial}{\partial x_j}
\end{equation*}
Let $f^{ij}$ be the inverse matrix of $f_{ij}$. With respect to the
coordinates $x^i, y_i$, the metric $\GJ$ is
\begin{eqnarray*}
\GJ \! \left( \ddx{i} , \ddx{j} \right) & = & \omega
\left( \ddx{i} , J \ddx{j} \right) = f_{jl} \omega
\left( \ddx{i} , \frac{\partial}{\partial y_l} \right) = f_{ji} \\
\GJ \! \left( \ddx{i} , \frac{\partial}{\partial y_j}\right)
& = & \omega \left( \ddx{i} , J \frac{\partial}{\partial y_j} \right)
= -f^{jl} \omega \left( \ddx{i} , \ddx{l} \right) = 0 \\
\GJ \! \left( \frac{\partial}{\partial y_i} ,
\frac{\partial}{\partial y_j} \right) & = & \omega
\left( \frac{\partial}{\partial y_i} , J \frac{\partial}{\partial y_j}
\right) = -f^{jl} \omega \left( \frac{\partial}{\partial y_i} ,
\ddx{l} \right) = f^{ji}
\end{eqnarray*}
which we can summarize as
\begin{equation} \label{kahlermetriceq}
\GJ = \mathcal G_{ij} \dx{i} \otimes \dx{j} + \mathcal G^{ij} dy_i
\otimes dy_j
\end{equation}
recalling that $f_{ij} = \mathcal G_{ij}$. Thus $(\omega, J, \GJ)$ is
a pseudo-K\"ahler structure on $\mathcal J$.
\end{proof}

\begin{proof}[Proof 2]
Alternatively, we could proceed as follows.
Theorem~\ref{hessianmetricthm} gives us the Riemannian metric
$\mathcal G_{ij}$ on the base $\mathcal M$ of $\mathcal J$. We use the flat
connection $\nab{}$ on $\mathcal M$ to lift the coordinate vector
fields $\eta_i = \ddx{i}$ to their {\em horizontal lifts} $\tilde
\eta_i = \ddx{i}$ (since $\nab{} = d + A$ with $A = 0$.) Now we use
the connection $\nab{}$ to lift the metric $\mathcal G$ from $\mathcal M$ to
a metric $\GJ$ on $\mathcal J$ by
isometrically identifying the horizontal space with the tangent
space to the base, and using the fibre metric on the cotangent
bundle induced from the metric $\mathcal G$ on $\mathcal M$, since $\mathcal J$
is locally isomorphic to $T^* \mathcal M$. Explicitly,
\begin{eqnarray*}
\GJ (\tilde \eta_i , \tilde \eta_j) & \equiv & \mathcal G \, (\eta_i, \eta_j)
= \mathcal G_{ij} \\ \GJ \! \left( \tilde \eta_i,
\frac{\partial}{\partial y_j}
\right) & \equiv & 0 \\ \GJ \! \left( \frac{\partial}{\partial y_i},
\frac{\partial}{\partial y_j} \right) & \equiv & \mathcal G
\left( \frac{\partial}{\partial y_i}, \frac{\partial}{\partial y_j}
\right) = \mathcal G^{ij}
\end{eqnarray*}
From~\eqref{kahlermetriceq} we see that this is the same metric $\GJ$
again. Let $\omega$ be the standard symplectic form on
$\mathcal J$ as before. Now we can
use~\eqref{kahlercompatibilityeq} to define $J$. It is easy to check
using~\eqref{kahlermetriceq} and~\eqref{kahlercompatibilityeq} that
this gives
\begin{equation}
J \left( \ddx{i} \right) = \mathcal G_{ij} \frac{\partial}{\partial y_j}
\qquad \qquad J \left( \frac{\partial}{\partial y_i} \right) =
- \mathcal G^{ij} \ddx{j}
\end{equation}
from which it follows that $J^2 = -1$. Now $\zeta_k = \ddx{k} - i
\mathcal G_{kj} \frac{\partial}{\partial y_j}$ is a basis of $(1,0)$ vector
fields. The Lie bracket of two $(1,0)$ vector fields of this type is
easily computed to be
\begin{equation*}
[ \zeta_k, \zeta_l ] = i \left( \frac{\partial \mathcal G_{kj}}{\partial x^l}
\frac{\partial}{\partial y_j} - \frac{\partial \mathcal G_{lj}}{\partial x^k}
\frac{\partial}{\partial y_j} \right) = 0
\end{equation*}
since $\mathcal G_{ij} = \frac{\partial^2 f}{\partial x^i \partial x^j}$ is a
Hessian. Therefore the $[1,0]$ vector fields are closed under the Lie
bracket, and thus by the Newlander-Nirenberg theorem, $J$ is
integrable. Hence $(\omega, J, \GJ)$ is a
pseudo-K\"ahler structure on $\mathcal J$.
\end{proof}

We pause now to describe $g_{\mathcal J}$, $\omega$, and $J$
invariantly. At a point $(\ph, D)$ in $\mathcal M \times
\HfS \cong \mathcal J$, the canonical symplectic form $\omega$ on
$\mathcal J$ is given by
\begin{equation} \label{standardsymplecticeq}
\omega( (\eta_1, \theta_1), (\eta_2, \theta_2) ) =
\int_M \eta_1 \wedge \theta_2 - \int_M \eta_2 \wedge
\theta_1
\end{equation}
where $\eta_i \in T_{\ph} \mathcal M = \HtR$ and $\theta_i \in
T_{D} (\HfS) \cong \HfR = (\HtR)^*$, by Poincar\'e duality. Also,
using~\eqref{stardefneq},~\eqref{starmetriceq}, and~\eqref{kahlermetriceq} we see
\begin{equation} \label{invariantmetriceq}
\GJ( (\eta_1, \theta_1), (\eta_2, \theta_2) ) =
\int_M \eta_1 \wedge \star_{\ph} \eta_2 + \int_M \theta_1 \wedge
\star_{\ph} \theta_2
\end{equation}
Now from~\eqref{kahlercompatibilityeq},~\eqref{standardsymplecticeq},
and~\eqref{invariantmetriceq} it follows that
\begin{equation} \label{invariantJeq}
J(\eta, \theta) = (- \star_{\ph} \theta, \star_{\ph} \eta)
\end{equation}
Since $\star_{\ph}^2 = 1$, this equation shows clearly that
$J^2 = -1$.

\begin{cor} \label{lagrangianfibrationcor}
The fibration $\pi : \mathcal{J} \rightarrow \mathcal{M}$ is a
Lagrangian fibration with a Lagrangian section. That is, the zero
section $\mathcal M$ and the fibres of the projection map $\pi$ are
Lagrangian submanifolds of $\mathcal J$.
\end{cor}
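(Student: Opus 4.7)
The plan is to read off both statements directly from the local coordinate description of $\omega$ given in the two proofs of Theorem~\ref{bigmodulispacekahlerthm}, together with the invariant formula~\eqref{standardsymplecticeq}. Dimensionally, the fibration $\pi:\mathcal J\to\mathcal M$ has base of real dimension $b_3=n+1$ and total space of real dimension $2(n+1)$, so a submanifold of dimension $n+1$ which is isotropic for $\omega$ is automatically Lagrangian. Thus it suffices to check that the zero section and the fibres of $\pi$ are isotropic.

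First I would treat the fibres. In a flat chart $\mathcal U_{\oph}$ with the canonical coordinates $(x^0,\dots,x^n,y_0,\dots,y_n)$ introduced in Proof~1, a fibre of $\pi$ is cut out by fixing all $x^i$, so its tangent space is spanned by $\partial/\partial y_0,\dots,\partial/\partial y_n$. Since $\omega=dx^i\wedge dy_i$, the pairing $\omega(\partial/\partial y_i,\partial/\partial y_j)$ vanishes identically, so each fibre is isotropic, hence Lagrangian. Invariantly, a tangent vector to the fibre at $(\ph,D)$ has the form $(0,\theta)$ with $\theta\in\HfR$, and \eqref{standardsymplecticeq} immediately gives $\omega((0,\theta_1),(0,\theta_2))=0$.

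Next I would handle the zero section, which is identified with $\mathcal M$ sitting inside $\mathcal J$ as $y_i\equiv 0$ (equivalently $D=0$). Its tangent space at $\ph$ is spanned by $\partial/\partial x^0,\dots,\partial/\partial x^n$, and again $\omega(\partial/\partial x^i,\partial/\partial x^j)=0$. Invariantly, tangent vectors are of the form $(\eta,0)$ with $\eta\in\HtR$, and \eqref{standardsymplecticeq} gives $\omega((\eta_1,0),(\eta_2,0))=0$. By the dimension count above, the zero section is Lagrangian, and it is visibly a section of $\pi$, so $\pi$ is a Lagrangian fibration admitting a Lagrangian section.

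I do not anticipate any real obstacle here: the entire content of the corollary is encoded in the cotangent-bundle normal form $\omega=dx^i\wedge dy_i$ obtained in Theorem~\ref{bigmodulispacekahlerthm}. The only point worth flagging is that the identification of $\mathcal J$ with $T^*\mathcal M$ is only local (globally $\mathcal J$ is a torus bundle), but this is harmless because the symplectic form $\omega$ and the zero section are both globally well-defined on $\mathcal J$, as noted in Proof~1 and as is manifest from the invariant expression~\eqref{standardsymplecticeq}.
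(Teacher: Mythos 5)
Your proposal is correct and takes essentially the same route as the paper, which simply observes that the claim is immediate from $\omega$ being the canonical symplectic form arising from the local identification of $\mathcal J$ with $T^*\mathcal M$. You have merely spelled out the coordinate and invariant verifications that the paper leaves implicit.
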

\begin{proof}
This is immediate from the fact that $\omega$ is the canonical
symplectic form on $\mathcal J$ given by the local isomorphism with
the cotangent bundle $T^* \mathcal M$.
\end{proof}

We also have the following further relationship between the superpotential
function $f$ and the pseudo-K\"ahler
structure on $\mathcal J$.

\begin{prop} \label{kahlerpotentialprop}
The Legendre transform $\hat f$ of $f$ is a K\"ahler potential for the
pseudo-K\"ahler structure on $\mathcal J$. That is,
\begin{equation*}
\omega = 2 i \del \delbar \hat f = 2 i \left(
\frac{\partial^2 \hat f}{\partial z_i \partial \bar z_j} \right) dz_i
\wedge d\bar z_j
\end{equation*}
Furthermore, $\hat f (\ph) = \frac{4}{3} f(\ph) = \frac{4}{7} \int_M
(\ph \wedge \stp \ph)$.
\end{prop}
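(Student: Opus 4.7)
The plan is to work in the Legendre-transformed coordinates $x_k = \partial f / \partial x^k$ introduced in Proof 1 of Theorem~\ref{bigmodulispacekahlerthm}, in which $z_k = x_k + i y_k$ are local holomorphic coordinates on $\mathcal J$. The Legendre transform of $f$ is $\hat f(x_k) = x^i x_i - f$, and the standard property of the Legendre transform, together with invertibility of the Hessian $f_{ij} = \mathcal G_{ij}$, gives $\partial \hat f / \partial x_k = x^k$ and therefore
\begin{equation*}
\frac{\partial^2 \hat f}{\partial x_i \partial x_j} = \mathcal G^{ij}.
\end{equation*}

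To verify $\omega = 2 i \del \delbar \hat f$, I would first rewrite the canonical symplectic form in the Legendre coordinates. Using $dx^i = \mathcal G^{ij} dx_j$, one obtains $\omega = dx^i \wedge dy_i = \mathcal G^{ij} dx_i \wedge dy_j$, after a relabelling that uses symmetry of $\mathcal G^{ij}$. On the other side, since $\hat f$ depends only on the $x_k$,
\begin{equation*}
\frac{\partial^2 \hat f}{\partial z_i \partial \bar z_j} = \frac{1}{4}\, \frac{\partial^2 \hat f}{\partial x_i \partial x_j} = \frac{1}{4} \mathcal G^{ij}.
\end{equation*}
Expanding $dz_i \wedge d \bar z_j = dx_i \wedge dx_j - i\, dx_i \wedge dy_j + i\, dy_i \wedge dx_j + dy_i \wedge dy_j$, the two ``diagonal'' terms $\mathcal G^{ij} dx_i \wedge dx_j$ and $\mathcal G^{ij} dy_i \wedge dy_j$ vanish by symmetry of $\mathcal G^{ij}$ against antisymmetry of the wedge, and the remaining contribution combines to $-2 i \mathcal G^{ij} dx_i \wedge dy_j$. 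Therefore $2 i \del \delbar \hat f = \mathcal G^{ij} dx_i \wedge dy_j = \omega$, as claimed.

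For the explicit value of $\hat f(\ph)$, I would argue by homogeneity of $f$. Under the rescaling $\ph \mapsto \lambda \ph$, the defining identity $(X \hk \ph) \wedge (Y \hk \ph) \wedge \ph = -6\, g_{\ph}(X,Y) \vol_{\ph}$ forces $g_{\ph} \mapsto \lambda^{2/3} g_{\ph}$ and hence $\vol_{\ph} \mapsto \lambda^{7/3} \vol_{\ph}$, so that $f$ is homogeneous of degree $7/3$ in $\ph$. Euler's relation then gives $x^i \frac{\partial f}{\partial x^i} = \frac{7}{3} f$, and so
\begin{equation*}
\hat f = x^i x_i - f = \left( \tfrac{7}{3} - 1 \right) f = \tfrac{4}{3} f,
\end{equation*}
which combined with Definition~\ref{potentialdefn} yields $\hat f(\ph) = \frac{4}{7} \int_M \ph \wedge \stp \ph$.

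The main obstacle is mostly bookkeeping. The two mildly subtle points are (i) ensuring the Legendre coordinates $x_k$ are genuinely well-defined on a flat chart, which was already handled via \eqref{legendreeq2} in Proof 1; and (ii) pinning down the homogeneity weight $7/3$ from the non-linear dependence of $g_{\ph}$ on $\ph$. Once these are in place, both assertions of the proposition follow by the short calculations above.
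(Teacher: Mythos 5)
Your proof is correct and follows essentially the same route as the paper: the Legendre coordinates $x_k = \partial f/\partial x^k$, the Hessian identity $\frac{\partial^2 \hat f}{\partial x_i \partial x_j} = \mathcal G^{ij}$, and the holomorphic coordinates $z_k = x_k + i y_k$, with your explicit real-coordinate expansion of $2i\del\delbar \hat f$ just making concrete what the paper gets by noting $\omega$ is of type $(1,1)$. The only minor variation is that you derive $x^i x_i = \frac{7}{3} f$ from the degree-$\frac{7}{3}$ homogeneity of $f$ via Euler's relation, whereas the paper reads the same identity off directly from the computed first derivative $\frac{\partial f}{\partial x^i} = \int_M \eta_i \wedge \stp \ph$ together with $\ph = x^i \eta_i$; both require the linear (rather than merely affine) choice of flat coordinates, which the paper's setup provides.
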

\begin{proof}
The definition of the Legendre transform $\hat f$ of $f$ is
\begin{equation*}
\hat f (x_0, \ldots, x_n) = x_i x^i(x_0, \ldots, x_n) -
f(x^0(x_0, \ldots, x_n), \ldots, x^n(x^0, \ldots, x_n))
\end{equation*}
as a function of the $x_i$'s, where recall that $x_i = \frac{\partial
f}{\partial x^i}$. An easy calculation shows
\begin{equation*}
\frac{\partial^2 \hat f}{\partial x_i \partial x_j} = f^{ij}
\end{equation*}
where $f^{ij}$ is the inverse matrix of $f_{ij} = \frac{\partial^2
f}{\partial x^i \partial x^j}$. Since $\omega$ is $J$-invariant, it
is of type $(1,1)$, and we have
\begin{equation*}
\omega = \frac{i}{2} \mathcal G^{\alpha \beta} dz_{\alpha} \wedge d\bar z_{\beta}
=  \frac{i}{2} \frac{\partial^2 \hat f}{\partial x_{\alpha}
\partial x_{\beta}} dz_{\alpha} \wedge d\bar z_{\beta} = 2 i
\frac{\partial^2 \hat f}{\partial z_{\alpha} \partial \bar z_{\beta}}
dz_{\alpha} \wedge d\bar z_{\beta}
\end{equation*}
and hence $\omega = 2 i \del \delbar \hat f$. All that remains is to
establish that $\hat f = \frac{4}{3} f$. Let $\ph = x^i \eta_i$. Using
the computations in the proof of Theorem~\ref{hessianmetricthm}, we
see that
\begin{eqnarray*}
\hat f & = & x^i x_i - f = x^i \frac{\partial f}{\partial x^i} - f = x^i
\int_M \eta_i \wedge \stp \ph - \frac{3}{7} \int_M \ph \wedge \stp
\ph \\ & = & \int_M \ph \wedge \stp \ph - \frac{3}{7} \int_M
\ph \wedge \stp \ph = \frac{4}{7} \int_M \ph \wedge \stp \ph
\end{eqnarray*}
as claimed.
\end{proof}

\begin{rmk} \label{physics3rmk}
Using $F = -\log(f)$ as a superpotential, one can still obtain a K\"ahler
structure on $\mathcal J$. In fact this time it is really K\"ahler since the
Hessian metric $F_{ij}$ on $\mathcal M$ is Riemannian (if and only if
$M$ is irreducible.) However, the Legendre transform $\hat F$ is {\em not}
a constant multiple of $F$ in this case. See also
Remarks~\ref{physicsrmk},~\ref{physics2rmk}, and~\ref{physics4rmk}.
\end{rmk}

The pseudo-K\"ahler structure on $\mathcal J$ we have just described
arises from the pseudo-Riemannian metric $\mathcal G$ on $\mathcal M$ together
with the local isomorphism of $\mathcal J$ with $T^* \mathcal M$.
This structure will be used to study the moduli space of branes
(coassociative submanifolds) of $M$ in Section~\ref{branessec}.

Alternatively, we could consider the other definition of the
intermediate Jacobian $\tilde{\mathcal J}_{\ph}$ given
in~\eqref{intermediatejacobiandefneq3}, which gives us that the
universal intermediate Jacobian $\mathcal J$ is locally
isomorphic to $T \mathcal M$. We take $x^0, \ldots, x^n$ to be local
flat coordinates on a flat chart $\mathcal U_{\oph}$ as before, and
let $y^0, \ldots, y^n$ be the corresponding coordinates on the
fibre. This gives a local basis of coordinate vector fields
\begin{equation*}
\frac{\partial}{\partial x^0}, \ldots, \frac{\partial}{\partial x^n},
\frac{\partial}{\partial y^0}, \ldots \frac{\partial}{\partial y^n}
\end{equation*}
and we define an endomorphism
$\tilde J$ of the tangent bundle of $\mathcal J$ by
\begin{equation*}
\tilde J \left( \frac{\partial}{\partial x^k} \right) =
\frac{\partial}{\partial y^k} \qquad \qquad 
\tilde J \left( \frac{\partial}{\partial y^k} \right) =
-\frac{\partial}{\partial x^k} \qquad \qquad k = 0, \ldots, n
\end{equation*}
Then $\tilde J$ is an integrable complex structure on $\mathcal J$ with
local holomorphic coordinates $z^k = x^k + i y^k$.

As before we use the flat connection $\nab{}$ on $\mathcal M$ to
lift the metric $\mathcal G$ on $\mathcal M$ to a metric $\tilde \GJ$ on
$\mathcal J$ by isometrically identifying the
horizontal space with the tangent space to the base, and using the
fibre metric on the tangent bundle induced from the metric $\mathcal G$ on
$\mathcal M$, since $\mathcal J$ is locally isomorphic to
$T \mathcal M$. That is,
\begin{equation*}
\tilde \GJ = \mathcal G_{ij} \dx{i} \otimes \dx{j} + \mathcal G_{ij} dy^i
\otimes dy^j
\end{equation*}
It is clear that this metric $\tilde \GJ$ is $\tilde J$-invariant.
This time we use the compatibility relation to define the symplectic form
$\tilde \omega$ from $\tilde J$ and $\tilde \GJ$ by
\begin{equation*}
\tilde \omega(X, Y) = \tilde \GJ (\tilde J X, Y)
\end{equation*}
In coordinates, this becomes
\begin{equation*}
\tilde \omega = \mathcal G_{ij} \dx{i} \wedge dy^j
\end{equation*}
which is closed because $\frac{\partial \mathcal G_{ij}}{\partial x^l} =
\frac{\partial^3 f}{\partial x^i \partial x^j \partial x^l}$ is
symmetric in $i$ and $l$.  Equivalently we can write $\tilde \omega = dx_{j}
\wedge dy^j$ where $dx_j = \mathcal G_{ij} \dx{i}$ as before, which clearly shows
the closure of $\tilde \omega$.

The triple $(\tilde \omega, \tilde J, \tilde \GJ )$ is a
different pseudo-K\"ahler structure on $\mathcal J$. In this case it is
easy to check directly that the superpotential function $f$ is exactly a
K\"ahler potential for this structure:
\begin{equation*}
\tilde \omega = 2i \del \delbar f = 2i \left(
\frac{\partial^2 f}{\partial z^i
\partial \bar z^j} \right) dz^i \wedge d\bar z^j
\end{equation*}

This pseudo-K\"ahler structure is used in Section~\ref{instantonssec}
to study the moduli space of instantons (associative submanifolds)
of $M$. The two structures are clearly isomorphic, and it is easy
to see that they are related by the Legendre transform. See~\cite{L2}
for a similar construction in the context of mirror symmetry of semi-flat
Calabi-Yau manifolds.

We will also need the invariant descriptions of $\tilde \GJ$, $\tilde
\omega$, and $\tilde J$. At a point $(\ph, C)$ in $\mathcal M \times
\HtS \cong \mathcal J$, let $\eta_i \in T_{\ph} \mathcal M = \HtR$ and
$\mu_i \in T_{C} (\HtS) \cong \HtR$. It is easy to check that
\begin{equation} \label{standardsymplecticeq2}
\tilde \omega( (\eta_1, \mu_1), (\eta_2, \mu_2) ) =
\int_M \eta_1 \wedge \star_{\ph} \mu_2 - \int_M \eta_2 \wedge
\star_{\ph} \mu_1
\end{equation}
and
\begin{equation} \label{invariantmetriceq2}
\tilde \GJ( (\eta_1, \mu_1), (\eta_2, \mu_2) ) =
\int_M \eta_1 \wedge \star_{\ph} \eta_2 + \int_M \mu_1 \wedge
\star_{\ph} \mu_2
\end{equation}
and
\begin{equation} \label{invariantJeq2}
\tilde J(\eta, \mu) = (- \mu, \eta)
\end{equation}
These should be compared with~\eqref{standardsymplecticeq}, 
\eqref{invariantmetriceq}, and~\eqref{invariantJeq}, respectively.

We close this section with a discussion of the cubic form associated
to a Lagrangian fibration, and show that in this case it is precisely
the Yukawa coupling from Definition~\ref{yukawadefn}. From the work of
Donagi and Markman~\cite{DM2}, a Lagrangian fibration with section
corresponds to a certain cubic form on the base of the fibration.

The construction of the cubic form associated to the Lagrangian
fibration is as follows. By Remark~\ref{lagrmk}, we can identify
the tangent space $T_{\ph} \mathcal M$ to the base $\mathcal M$
with $\HfR$. For any tangent vector $\theta \in \HfR$ to the base,
we get an infinitesimal variation of the Lagrangian torus fibers
$\HfS$. Note that here we are fixing the lattice and changing the
metric on $\HfR$, instead of fixing the
metric on $\HfR$ and varying the lattice. Such an infinitesmal
variation is an element of $\mathrm{Sym}^2 (T^* \HfR) \cong
\mathrm{Sym}^2 (\HtR)$. We therefore have a map
\begin{equation*}
c : T\mathcal{M} \rightarrow \mathrm{Sym}^2 (\HtR)
\end{equation*}
or equivalently an element $c$ of 
\begin{equation*}
T^* \mathcal{M} \otimes \mathrm{Sym}^2 (\HtR) \cong \HtR \otimes
\mathrm{Sym} (\HtR)
\end{equation*}
We will show that the Lagrangian condition in fact implies that 
\begin{equation*}
c \in \mathrm{Sym}^3 (\HtR)
\end{equation*}
by showing that $c = 2 \, \mathcal{Y}$, where $\mathcal Y$ is the
Yukawa coupling. By the definition of $c$, if we take $\eta_1$,
$\eta_2$, and $\eta_3$ to be elements of $\HtR \cong T_{\ph}
\mathcal M$, then
\begin{equation*}
c\left( \eta_1, \eta_2, \eta_3 \right) = \left. \frac{d}{dt}
\right|_{t=0} \mathcal G_t \left( \eta_1, \eta_2 \right) = 
\left. \frac{d}{dt} \right|_{t=0} \int_M \eta_1 \wedge \star_{\ph_t}
\eta_2
\end{equation*}
where $\mathcal G_t$ is the Hessian metric of
Theorem~\ref{hessianmetricthm} at $T_{\ph_t} \mathcal M$, for
$\ph_t$ satisfying $\ph_0 = \ph$ and $\left. \frac{d}{dt}
\right|_{t=0} \ph_t = \eta_3$. But the claim now follows immediately
from Theorem~\ref{yukawathm}.

\section{Abel-Jacobi Maps in \G-geometry} \label{abeljacobisec}

In this section, we discuss Abel-Jacobi type maps in \G-geometry. We
show that associative cycles, coassociative cycles, and deformed
Donaldson-Thomas connections are critical points of Chern-Simons type
functionals. The moduli spaces of these structures can be
isotropically immersed (with respect to the appropriate symplectic
structure) into the universal intermediate Jacobian $\mathcal J$ using
these Abel-Jacobi maps.

\subsection{Functionals of Chern-Simons Type} \label{chernsimonssec}

We are going to define a Chern-Simons type functional for pairs
$\left( N, A \right) $ where $N$ is a $k$-dimensional submanifold of
$M$, of a fixed diffeomorphism type, and $A$ is a unitary connection
for a fixed rank $r$ complex vector bundle $E$ over $N$. Let $\mathcal
C_k$ be the set of all such pairs. We will fix a base point $\left(
N_0, A_0 \right) $ on each connected component of the configuration
space $\mathcal C_k$. For simplicity, we will restrict our attention to
one such connected component and denote its univeral cover by
$\widetilde {\mathcal C_k}$. An element in $\widetilde {\mathcal C_k}$
corresponds to a pair $\left( N, A \right)$ in $\mathcal C_k$ together
with a homotopy class of paths in $\widetilde {\mathcal C_k}$ joining
it with $\left( N_0, A_0 \right)$.  Let $\left\{ \left( N_t ,A_t
\right) \right\} _{t \in \left[ 0,1 \right]}$ be any such path.
We define the set
\begin{equation} \label{Nteq}
\bar N = \left\{ (t, p_t): t \in [0,1], p_t \in N_t \right\}
\end{equation}
Clearly we have $\bar N \, \cong \, [0,1] \times N_0$ since each $N_t$
is diffeomorphic to $N_0$. Let $\pi : \bar N \to M$ be the projection
onto the second factor: $\pi(t, p_t) = p_t \in N_t \subset M$. Then
$\pi^*(\ph)$ and $\pi^* (\st \ph)$ are forms on $\bar N$. From the family of
connections $A_t$ on $N_t$ we define
\begin{equation*}
\bar A(t,p_t) = \pi^*(A_t(p_t))
\end{equation*}
which is unitary connection $\bar A$ on $\bar N$.

\begin{defn} \label{chernsimonsdefn} We define the Chern-Simons type
functional $\Phi_{k, \ph}$ for fixed $k$ and fixed torsion-free
\G-structure $\ph$ as follows:

\begin{align} \nonumber
  \Phi_{k, \ph} & : \widetilde {\mathcal C_k} \rightarrow \mathbb R \\
  \label{chernsimonsdefneq} \Phi_{k, \ph} \left( N, A \right) & =
  \int_{\bar N} \tr \left[ \exp \left( \frac{i}{2\pi} F_{\bar A} +
      \pi^*(\ph) + \pi^*(\st \ph) \right) \right]
\end{align}
Here $F_{\bar A}$ is locally a matrix-valued differential form, and we
consider the ordinary forms $\ph$ and $\st \ph$ to be matrix-valued by
tensoring with the identity matrix. Hence $\tr(\alpha) = r\alpha$ for
an ordinary form $\alpha$, where $r$ is the rank of $E$.

By standard arguments using the closedness of $\ph$ and $\st \ph$ and
the Bianchi identity, this integral is independent of the choice of
the path in $\widetilde {\mathcal C_k}$.  Therefore, $\Phi_{k, \ph}$ is
a well-defined function. We will sometimes abuse notation and drop the
$\pi^*$ for notational simplicity, when there is no risk of confusion.
\end{defn}

\begin{rmk} \label{circlevaluedrmk}
If $\bar N$ is a {\em closed} manifold, then Stokes' theorem shows
that $\Phi_{k, \ph}$ is a well-defined functional on $\mathcal C_k$,
modulo discrete periods depending on the cohomology classes $[F_A]$,
$[\ph]$, and $[\st \ph]$. Therefore, up to a constant multiple, we
can assume that $\Phi_{k, \ph} (N, A)$ is always an integer for a
closed manifold $\bar N$, provided that $[\ph]$ and $[\st \ph]$ are
rational cohomology classes. In such case, we can descend $\Phi_{k,
\ph}$ to a {\em circle-valued function} on $\mathcal C_k$.
\end{rmk}

We will need the explicit form of this functional for paths such that
$N_t = N_0$ for all $t$. In this case $\bar N = [0,1] \times N_0$.
Then $A_t$ is a connection on $N_0$ for each $t$, and $t$ is just a
parameter. We have $F_{\bar A} = d\bar A + \bar A \wedge \bar A$,
where $d$ is the exterior derivative on $[0,1] \times N_0$. Hence
$F_{\bar A} = d_{N_0} A_t + dt \wedge A'_t + A_t \wedge A_t =
F_{A_t} + dt \wedge A'_t$, where $'$ denotes differentiation with
respect to the parameter $t$. Then it is easy to see that
\begin{equation*}
  \exp \left( \frac{i}{2\pi} F_{\bar A} + \ph + \st \ph \right) =
\left( 1 + \frac{i}{2\pi} dt \wedge A'_t \right) \wedge 
\exp \left( \frac{i}{2\pi} F_{A_t} + \ph + \st \ph \right)
\end{equation*}
The first term above has no $dt$ factor, so it integrates to zero over
$\bar N$. Therefore we are left with
\begin{equation} \label{Nfixedfunctionaleq}
\Phi_{k, \ph} (N_0, A) =
\frac{i}{2\pi} \int_0^1 \int_{N_0} dt \wedge \tr \left(  A'_t \wedge \exp
\left( \frac{i}{2\pi} F_{A_t} + \ph + \st \ph \right) \right)
\end{equation}
whenever $A_t$ is a path of connections from $A_0$ to $A$, for fixed $N_0$.

The symmetry group for this functional $\Phi_{k, \ph}$ is the
connected component of the group of extended gauge symmetries
$\widetilde {\mathcal G}$ of the bundle $E$ over $N_0$, which fits
into the following exact sequence:
\begin{equation*}
1 \rightarrow \mathcal G \rightarrow \widetilde {\mathcal G} \rightarrow
\mathrm{Diff}_0 ( N_0 ) \rightarrow 1
\end{equation*}
where $\mathcal G = \mathrm{Aut} (E)$ is the group of unitary gauge
transformations of $E$, and $\mathrm{Diff}_0 (N_0)$ is the group of diffeomorphisms of $N_0$ isotopic to the identity.

We denote the moduli space $\mathcal N_{k, \ph}$ of critical points of
$\Phi_{k, \ph}$ by
\begin{equation} \label{criticalmodulieq}
\mathcal N_{k, \ph} = \left\{  d\Phi_{k, \ph} = 0 \right\}  /
\widetilde {\mathcal G}
\end{equation}
and the associated {\em universal moduli space} by
\begin{equation} \label{universalmodulieq}
\mathcal N_k = \coprod_{\ph \in \mathcal M} \mathcal N_{k, \ph}
\end{equation}

Because we have $\exp \left( \ph + \st \ph \right) = 1 + \ph + \st \ph
+ \ph \wedge \st \ph$, we will see that the functional $\Phi_k$ is
only interesting for $k = 3$, $4$, or $7$. We are going to see that
the corresponding critical points are associative cycles,
coassociative cycles and deformed Donaldson-Thomas connections.

We now proceed to generalize the functionals $\Phi_{k, \ph}$ to obtain
analogues of {\em Abel-Jacobi maps} for \G-manifolds. Note that
since we are assuming that the cohomology of $M$ has no torsion, we
have
\begin{equation*}
(H^k(M, S^1))^* = H^{7-k}(M, S^1)
\end{equation*}
where recall that $H^k(M, S^1) = H^k(M, \mathbb R) /
H^k(M, \mathbb Z)$. Let $l = 3$ or $4$. We have shown that 
$\mathcal J \cong \mathcal M \times \HtS$ and also $\mathcal J \cong
\mathcal M \times \HfS$.

\begin{defn} \label{abeljacobidefn}
We define the Abel-Jacobi map $\nu^l_k$ from
the universal moduli space $\mathcal N_k$ to $\mathcal J \cong
\mathcal M \times H^{7-l}(M, S^1) \cong \mathcal M \times
(H^l(M, S^1))^*$ as follows.
\begin{eqnarray} \nonumber
\nu^l_k \quad : \quad \mathcal N_k & \to & \mathcal M \times
(H^l(M, S^1))^* \\ \label{abeljacobidefneq1} (\ph, N, A) &
\mapsto & (\ph, \beta)
\end{eqnarray}
where $\beta \in (H^l(M, S^1))^*$ is defined by
\begin{equation} \label{abeljacobidefneq2}
\int_M \beta \wedge \alpha = \int_{\bar N} \tr \left[ \exp
\left( \frac{i}{2\pi} F_{\bar A} + \pi^*(\ph) + \pi^*(\st \ph) \right)
\right] \wedge \pi^*(\alpha)
\end{equation}
for all $\alpha \in H^l(M, S^1)$.
\end{defn}

In the following sections, we will examine these maps in three
situations. For $k=3$ or $7$ we need $l=4$, and for $k=4$ we need
$l=3$. We will see explicitly that the Abel-Jacobi map $\nu^l_k$ is
well-defined in each of these cases, and we will show that its image
is isotropic with respect to the appropriate symplectic structure.
Hence $\mathcal N_k$ is a Lagrangian submanifold of $\mathcal J$
whenever $\nu$ is an immersion.

We close this section with the following lemma, which we will need
later.

\begin{lemma} \label{basiclemma}
Let $(M^n, g)$ be a compact oriented Riemannian manifold, with $N_0$ a
closed oriented $k$-dimensional submanifold of $M$. Suppose that $X$ is a
normal vector field on $N_0$. That is, $X(p) \perp T_p N_0$ for all
$p \in N_0$. Define
\begin{equation*}
N_s = \exp (s X) \cdot N_0 \qquad , \qquad \bar N(t) = 
 \left\{ (t, p_t): t \in [0,1], p_t \in N_t \right\}
\end{equation*}
and define $\pi : \bar N(t) \to M$ to be the projection onto the
second factor. We give $\bar N(t)$ the orientation such that $(
\frac{\partial}{\partial t}, e_1, \ldots, e_k)$ is an
oriented basis of $\bar N(t)$ whenever $(e_1, \ldots, e_k)$ is an oriented
basis of $N_s$. If $\alpha$ is a $(k+1)$-form on $M$, then
\begin{equation} \label{basiclemmaeq}
\left. \ddt \right|_{t=0} \int_{\bar N(t)} \pi^* (\alpha) =
\int_{N_0} X \hk \alpha
\end{equation}
\end{lemma}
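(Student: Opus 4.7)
The plan is to parametrize the cylinder $\bar N(t)$ explicitly, write the integrand as a form on a product, and apply the fundamental theorem of calculus. The key observation is that because $N_s = \exp(sX)\cdot N_0$ is defined via the exponential flow of $X$, the family $\{N_s\}$ can be realized as the image of a single smooth map from a product.

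First I would define the smooth map
\begin{equation*}
F : [0,1]\times N_0 \to M, \qquad F(s,p) = \exp(s X(p))\cdot p,
\end{equation*}
and observe that $F$ restricts to a diffeomorphism onto $N_s$ at each fixed $s$, so that $\bar N(t)$ is the image of $F|_{[0,t]\times N_0}$ with the orientation that makes this parametrization orientation-preserving (this matches the convention $(\partial/\partial s, e_1,\ldots,e_k)$ stated in the lemma). Under this identification, $\pi^*\alpha = F^*\alpha$, which I would decompose as
\begin{equation*}
F^*\alpha = ds \wedge \beta_s + \gamma_s,
\end{equation*}
where $\beta_s$ is a $k$-form on $N_0$ depending smoothly on $s$ and $\gamma_s$ contains no $ds$ factor. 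Then Fubini gives
\begin{equation*}
\int_{\bar N(t)} \pi^*\alpha = \int_0^t \!\!\int_{N_0} \beta_s \, ds,
\end{equation*}
so by the fundamental theorem of calculus,
\begin{equation*}
\left.\ddt\right|_{t=0} \int_{\bar N(t)} \pi^*\alpha = \int_{N_0} \beta_0.
\end{equation*}

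The last step is to identify $\beta_0$. Since $F(0,p) = p$ and $F_*(\partial/\partial s)|_{(0,p)} = X(p)$, the contraction computation
\begin{equation*}
\beta_0(v_1,\ldots,v_k) = (F^*\alpha)\bigl(\tfrac{\partial}{\partial s}, v_1,\ldots, v_k\bigr)\Big|_{s=0} = \alpha\bigl(X(p), v_1, \ldots, v_k\bigr) = (X \hk \alpha)(v_1,\ldots,v_k)
\end{equation*}
shows that $\beta_0 = (X \hk \alpha)|_{N_0}$, which yields the claimed identity. The mildest technical point, and the main thing to be careful about, is verifying that the orientation convention on $\bar N(t)$ matches the product orientation on $[0,t]\times N_0$ induced by $F$; once this is fixed, the computation is essentially formal and does not require $X$ to be normal in any essential way (normality was included in the hypothesis because it is the case that will be applied in the subsequent sections on variations of calibrated submanifolds).
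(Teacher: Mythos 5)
Your proof is correct and follows essentially the same route as the paper's: both parametrize $\bar N(t)$ by $[0,t]\times N_0$ via the exponential flow, pull back $\alpha$, isolate the $ds$-component, and apply the fundamental theorem of calculus, identifying the resulting $k$-form at $s=0$ with $X \hk \alpha$. If anything, your version is slightly more careful than the paper's in only identifying the contraction at $s=0$ (where $F_*(\partial/\partial s) = X$ exactly), and your observation that normality of $X$ is not actually needed is also correct.
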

\begin{proof}
Define a map $\rho : [0, t] \times N_0 \to \bar N(t)$ by $\rho(s,p) =
(s, \exp(sX) \cdot p)$. Clearly $\rho$ is a diffeomorphism, and $(\pi 
\circ \rho)_* (\frac{\partial}{\partial s}) = X$. We have
\begin{eqnarray*}
\int_{\bar N(t)} \pi^* (\alpha) & = & \int_{[0,t] \times N_0} \rho^* (
\pi^* (\alpha)) = \int_{[0,t] \times N_0} ds \wedge
\left(\frac{\partial}{\partial s} \hk (\pi \circ \rho)^* \alpha \right) \\
& = & \int_0^t \left( \int_{N_0} \frac{\partial}{\partial s} \hk
(\pi \circ \rho)^* \alpha \right) ds = \int_0^t
\left( \int_{N_0} X \hk \alpha \right) ds
\end{eqnarray*}
and thus by the fundamental theorem of calculus,
\begin{equation*}
\left. \ddt \right|_{t=0} \int_{\bar N(t)} \pi^*(\alpha) =
\int_{N_0} X \hk \alpha
\end{equation*}
which completes the proof.
\end{proof}

\subsection{Universal moduli of instantons}
\label{instantonssec}

An {\em instanton} in the \G-manifold $M^7$ is an {\em associative}
submanifold $N^3$. They are $3$-dimensional submanifolds of $M$
which are calibrated with respect to $\ph$. A $3$-manifold $N^3$ in $M$
is associative if and only if
\begin{equation} \label{assoceq}
\left. (X \hk \ps) \right|_N = 0
\end{equation}
for all normal vector fields $X$ on $N$, where $\ps = \st \ph$ is the
dual $4$-form.  This is equivalent to the vanishing on $N$ of the
vector valued $3$-form $\chi$ obtained by raising an index of $\ps$
with the metric $g_{\ph}$.

We will denote the functional $\Phi_{k, \ph}$ defined
in~\eqref{chernsimonsdefneq} by $\Phi_{\!\! \mathcal A}$ when $k=3$.
Explictly,
\begin{equation} \label{assocfunctionaleq}
\Phi_{\!\! \mathcal A} (N, A) = \Phi_{3, \ph} (N, A) = \int_{\bar N}
\left( r \, \pi^*(\ps) - \frac{1}{8 \pi^2} \tr (F_{\bar A}^2) \right)
\end{equation}
in this case. The critical points of $\Phi_{\!\! \mathcal A}$ are
described by the following theorem.

\begin{thm} \label{assocfunctionalthm}
The pair $(N_0, A_0)$ is a critical point of $\Phi_{\!\! \mathcal A}$ if and
only if $N_0$ is an associative submanifold of $M$ and $A_0$ is a {\em flat}
connection.
\end{thm}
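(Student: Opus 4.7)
The plan is to decompose tangent vectors at $(N_0, A_0) \in \widetilde{\mathcal C_3}$ into a pure connection variation $a \in \Omega^1(N_0, \mathrm{End}(E))$ and a pure submanifold variation given by a normal vector field $V$ on $N_0$, and then compute the two contributions to $d\Phi_{\!\!\mathcal A}$ separately. Because $d\Phi_{\!\!\mathcal A}$ is a linear functional on the tangent space, critical points are exactly those where both pieces vanish.

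For the connection direction, I fix $N = N_0$ and apply formula~\eqref{Nfixedfunctionaleq} with $k = 3$. Expanding $\exp\bigl(\tfrac{i}{2\pi}F_{A_t} + \ph + \ps\bigr)$, the only term whose wedge with the $1$-form $A'_t$ can produce a $3$-form on the $3$-manifold $N_0$ is $\tfrac{i}{2\pi} F_{A_t}$, since any contribution involving $\ph|_{N_0}$ or $\ps|_{N_0}$ has total degree strictly greater than $\dim N_0$. Setting $A'_0 = a$ then yields
\[
d\Phi_{\!\!\mathcal A}(0, a) = -\frac{1}{4\pi^2} \int_{N_0} \tr(a \wedge F_{A_0}),
\]
and requiring this to vanish for every $\mathrm{End}(E)$-valued $1$-form $a$ forces $F_{A_0} = 0$, i.e.\ that $A_0$ is a flat connection.

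For the submanifold direction, I take the path $N_s = \exp(sV) \cdot N_0$ and extend $A_0$ to connections on the nearby $N_s$ by pullback along the flow, so that under the diffeomorphism $\rho(s, p) = (s, \exp(sV) \cdot p)$ the pullback $\rho^* \bar A$ is $t$-independent with no $dt$-component. Lemma~\ref{basiclemma} applied to the $4$-form $\ps$ then contributes $r \int_{N_0} V \hk \ps$ to the variation. The curvature piece, via $\rho$, pulls back to $\tr(F_{A_0}^2)$, which is a $4$-form on the $3$-manifold $N_0$ and hence identically zero; so $\int_{\bar N(t)} \tr(F_{\bar A}^2) = 0$ for all $t$ and contributes nothing. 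A standard bump-function argument then shows the vanishing of $\int_{N_0} V \hk \ps$ for all normal $V$ is equivalent to $(V \hk \ps)|_{N_0} = 0$ pointwise for every normal $V$, which is precisely the associative condition~\eqref{assoceq}.

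The main obstacle I anticipate is the careful handling of the $\tr(F_{\bar A}^2)$ term under submanifold variation: one must choose a geometrically natural extension of $A_0$ to the moving submanifolds and verify both that this defines a legitimate path in $\widetilde{\mathcal C_3}$ and that the $F^2$-piece drops out cleanly, so that the associative and flatness conditions are genuinely decoupled. Using the flow-induced trivialization of the bundle accomplishes both goals, reducing the problematic term to a pulled-back top-degree form on a lower-dimensional submanifold; combining this with the connection-direction computation completes the proof.
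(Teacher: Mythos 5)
Your proposal is correct and follows essentially the same route as the paper: vary the connection with $N_0$ fixed via~\eqref{Nfixedfunctionaleq} to force $F_{A_0}=0$, then vary the submanifold via Lemma~\ref{basiclemma} to force $(X \hk \ps)|_{N_0}=0$. The only difference is minor: you dispose of the $\tr(F_{\bar A}^2)$ contribution to the submanifold variation by a degree count under the flow-induced trivialization, whereas the paper kills it using the already-established flatness $F_{A_0}=0$; both are valid.
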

\begin{proof}
We will choose $(N_0, A_0)$ to be our base point. Consider first a
variation of the connection: $A_t = A_0 + t A$, where $N = N_0$ is
fixed.  Hence $F_{A_t} = dA_t + A_t \wedge A_t = F_{A_0} + t (dA + A
\wedge A_0 + A_0 \wedge A) + t^2 (A \wedge A)$.

Then using equation~\eqref{Nfixedfunctionaleq}, (where $A'_t = A$) we have
\begin{eqnarray*}
\left. \ddt \right|_{t=0} \Phi_{\!\! \mathcal A} (N_0, A_t) & = &
\left. \ddt \right|_{t=0} -\frac{1}{4 \pi^2} \int_0^t \int_{N_0} ds \wedge
\tr \left( A \wedge F_{A_s} \right) \\ & = & -\frac{1}{4 \pi^2} \int_{N_0} 
\tr(A \wedge F_{A_0})
\end{eqnarray*}
This must vanish for all possible $A$, thus $F_{A_0} = 0$ and $A_0$ is
a flat connection.

Now let $N_t$ be a variation of $N_0$, which we can write as
$N_t = \exp(tX) \cdot N_0$, for some normal vector field $X$ on $N_0$.
By Lemma~\ref{basiclemma}, we have
\begin{equation*}
\left. \ddt \right|_{t=0} \Phi_{\!\! \mathcal A} (N_t, A_0) = r \int_{N_0}
\left( X \hk \ps \right) - 
\frac{1}{8 \pi^2} \int_{N_0} \left(  X \hk \tr (F_{A_0}^2) \right)
\end{equation*}
The second term vanishes since we just showed for $(N_0, A_0)$ a
critical point of $\Phi_{\!\! \mathcal A}$, we must have $F_{A_0} = 0$.
It follows from~\eqref{assoceq} that $(N_0, A_0)$ is critical
for all variations if and only if $N_0$ is associative and $A_0$ is flat.
\end{proof}

\begin{defn} \label{assocmodulidefn}
The pair $(N, A)$ where $N$ is an associative submanifold of $M$
with respect to $\ph$ and $A$ is a flat connection on $N$ will be
called an {\em associative cycle}. Using the notation
of~\eqref{criticalmodulieq} we see that $\mathcal N_{3, \ph}$
is the space of associative cycles with respect to the \G-structure
$\ph$.
\end{defn}

Now consider the Abel-Jacobi map from Definition~\eqref{abeljacobidefn}
with $k=3$. It is easy to see that this map is non-trivial only for
$l=4$. Let us denote $\nu^4_3$ by $\nu$. Explictly, we have
\begin{equation*}
\nu (\ph, N, A) = (\ph, \bar \nu (N, A)) = (\ph, C)
\end{equation*}
where $C = \bar \nu (N, A) \in \HtS$ is defined by
\begin{equation} \label{nudefneq}
\int_M \bar \nu (N, A) \wedge D = r \! \int_{\bar N} \pi^*(D)
\end{equation}
for any $D \in \HfS$, where $r$ is the rank of $E$.

\begin{prop} \label{nuprop}
The image of $\nu$ in $\mathcal J$ is isotropic with respect to the
symplectic structure $\tilde \omega$ on $\mathcal J \cong \mathcal M
\times \HtS$ described in~\eqref{standardsymplecticeq2}.
\end{prop}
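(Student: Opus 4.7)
The plan is to recognize $\nu$ as the generating-function correspondence for the Chern-Simons functional $\Phi_{\!\!\mathcal A}$ and to transfer the resulting Lagrangian-image property from the canonical symplectic form on $T^*\mathcal M$ to $\tilde\omega$ on $\mathcal J \cong T\mathcal M$ via the metric $\mathcal G$. To that end, at any $(\ph, N, A) \in \mathcal N_3$, the class $C \in \HtS$ produced by $\nu$ agrees---under the isomorphism $T^*_\ph\mathcal M \cong T_\ph\mathcal M$ induced by $\mathcal G$---with the partial derivative $d_\ph \Phi_{\!\!\mathcal A}$: Lemma~\ref{ddtpslemma} together with Definition~\ref{stardefn} gives $\dot\ps = \star_\ph \eta$, whence
\[
d_\ph \Phi_{\!\!\mathcal A}(\eta) \;=\; r \int_{\bar N} \pi^*(\star_\ph \eta) \;=\; \int_M C \wedge \star_\ph \eta \;=\; \mathcal G(C, \eta),
\]
the second equality being the defining relation~\eqref{nudefneq} specialized to $D = \star_\ph\eta$ and the third being~\eqref{starmetriceq}. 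Thus $\nu$ factors as $(\ph, N, A) \mapsto (\ph, d_\ph \Phi_{\!\!\mathcal A}) \in T^*\mathcal M$ followed by the metric identification $T^*\mathcal M \cong T\mathcal M \cong \mathcal J$.

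The classical generating-function principle then yields isotropy for the first map with respect to the canonical form $\omega = dx^i \wedge d\xi_i$ on $T^*\mathcal M$. Concretely, the pullback of the tautological $1$-form $\theta = \xi_i\, dx^i$ satisfies
\[
\nu^*\theta \;=\; \frac{\partial \Phi_{\!\!\mathcal A}}{\partial x^i}\, dx^i \;=\; d\Phi_{\!\!\mathcal A}\big|_{\mathcal N_3},
\]
where the last equality uses $d_{(N,A)}\Phi_{\!\!\mathcal A} = 0$ on $\mathcal N_3$. Since pullback commutes with $d$, $\nu^*\omega = d\,\nu^*\theta = 0$. That $\Phi_{\!\!\mathcal A}$ is only defined modulo periods on the universal cover $\widetilde{\mathcal C_3}$ is harmless, since only the globally defined closed $1$-form $d\Phi_{\!\!\mathcal A}$ is used in the argument.

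Finally, I would verify that the metric identification pulls $\omega$ back to $\tilde\omega$. In local flat coordinates $(x^i)$ on $\mathcal M$ with fiber coordinates $y^j$ and $\xi_i = \mathcal G_{ij}\, y^j$,
\[
\omega \;=\; dx^i \wedge d\xi_i \;=\; \mathcal G_{ij}\, dx^i \wedge dy^j \;+\; f_{ijk}\, y^j\, dx^i \wedge dx^k,
\]
and the second summand vanishes because $f_{ijk}$ is totally symmetric in $(i, j, k)$---being a third partial of the superpotential $f$---while $dx^i \wedge dx^k$ is antisymmetric in $(i, k)$. The first summand is precisely $\tilde\omega$. The main obstacle is the infinite-dimensional nature of the generating-function argument: one needs $\mathcal N_3$ to be smooth enough for $d\Phi_{\!\!\mathcal A}|_{\mathcal N_3}$ to be a well-defined closed $1$-form on it, but since the entire computation uses only exterior-calculus identities and not any elliptic theory on the normal bundle of $N$, no delicate analysis intervenes.
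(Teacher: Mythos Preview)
Your proof is correct and shares the paper's core mechanism---exhibiting the pullback of a primitive of the symplectic form as $d\Phi_{3,\ph}$---but organizes it differently. The paper works directly on $\mathcal J \cong \mathcal M \times \HtS$ with the explicit primitive $\tilde\alpha_{(\ph,C)}(\eta,\mu) = \tfrac12\int_M \ph\wedge\star_{\ph}\mu - \tfrac12\int_M C\wedge\star_{\ph}\eta$, computes $\nu^*\tilde\alpha$ by hand using~\eqref{nudefneq} and Lemma~\ref{basiclemma}, invokes the associative condition to kill the resulting $\int_N(X\hk\ps)$ term, and reads off $\nu^*\tilde\alpha = -\tfrac12\,d\Phi_{3,\ph}$. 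You instead recast $\nu$ as the metric-dual of the partial gradient $d_{\ph}\Phi_{\!\!\mathcal A}$, apply the generating-function principle on $T^*\mathcal M$ with the tautological $1$-form---packaging the associative and flatness conditions together as $d_{(N,A)}\Phi=0$---and then transfer back to $\tilde\omega$ via the observation that the total symmetry of $f_{ijk}$ makes the metric identification $T\mathcal M \to T^*\mathcal M$ a symplectomorphism. Your route makes the Hessian structure of $\mathcal M$ do the work and is arguably more conceptual; the paper's concrete identity $\nu^*\tilde\alpha = -\tfrac12\,d\Phi$ has the advantage of being reused verbatim for the coassociative and DDT cases in Propositions~\ref{muprop} and~\ref{chiprop}. (A harmless sign: with $\theta = \xi_i\,dx^i$ one has $d\theta = -dx^i\wedge d\xi_i = -\omega$ in your convention, not $+\omega$, but this does not affect the conclusion.)
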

\begin{proof}
The symplectic form is exact: $\tilde \omega = d\tilde \alpha$, where
$\tilde \alpha$ is the $1$-form given by
\begin{equation} \label{alphaeq2}
\tilde \alpha_{(\ph, C)} (\eta, \mu) = \frac{1}{2} \int_M \ph \wedge
\star_{\ph} \mu - \frac{1}{2} \int_M C \wedge \star_{\ph} \eta
\end{equation}
for $\eta \in T_{\ph} \mathcal M \cong \HtR$ and $\mu \in T_C \HtS \cong \HtR$.
(This follows easily from the local coordinate expression: 
$\tilde \omega = dx_j \wedge dy^j = d ( \frac{1}{2} x_j dy^j - \frac{1}{2}
y^j dx_j )$. We compute the pullback $\nu^* (\tilde \alpha)$ of $\tilde \alpha$
by $\nu$:
\begin{eqnarray} \nonumber
& & \left. \nu^* (\tilde \alpha) \right|_{(\ph, N, A)} (\delta \ph, \delta N, 
\delta A) = \left. \tilde \alpha \right|_{(\ph, \bar \nu (N, A))}
(\delta \ph, \bar \nu_* (\delta N, \delta A)) \\ \nonumber & & = 
\frac{1}{2} \int_M \ph \wedge \star_{\ph} \bar \nu_* (\delta N, \delta A) -
\frac{1}{2} \int_M \bar \nu (N, A) \wedge \star_{\ph} \delta \ph \\ 
\label{numaincalceq} & & = \frac{2}{3} \int_M \bar \nu_* (\delta N, \delta A) \wedge \ps -
\frac{1}{2} \int_M \bar \nu (N, A) \wedge \star_{\ph} \delta \ph
\end{eqnarray}
where we have used the fact that $\star_{\ph} \ph = \frac{4}{3} \stp \ph
= \frac{4}{3} \ps$. If we differentiate
equation~\eqref{nudefneq} with $D = \ps = \stp \ph$, and use
Lemma~\ref{basiclemma}, we obtain
\begin{equation*}
\int_M \bar \nu_* (\delta N, \delta A) \wedge \ps + \int_M \bar \nu (N, A) \wedge
\star_{\ph} \delta \ph = r \! \int_N (X \hk \ps) + r \! \int_{\bar N}
\pi^*( \star_{\ph} \delta \ph)
\end{equation*}
where $X$ is the normal vector field on $N$ corresponding to the variation
$\delta N$, and we have also used Lemma~\ref{ddtpslemma}.
Using~\eqref{nudefneq} again, the above expression reduces to
\begin{equation*}
\int_M \bar \nu_* (\delta N, \delta A) \wedge \ps = r \! \int_N (X \hk \ps)
\end{equation*}
Substituting this into~\eqref{numaincalceq} and
using~\eqref{nudefneq} again gives
\begin{equation*}
\left. \nu^* (\tilde \alpha) \right|_{(\ph, N, A)} (\delta \ph, \delta N, 
\delta A) = \frac{2r}{3} \int_N (X \hk \ps) - \frac{r}{2} \int_{\bar N} \pi^*(\star_{\ph} \delta \ph)
\end{equation*}
The first term vanishes since $N$ is associative. Now consider again the
functional $\Phi_{3, \ph}$, but consider also variations in $\ph$.
We see that
\begin{equation*}
\left. d\Phi_{3, \ph} \right|_{(\ph, N, A)} (\delta \ph, \delta N,
\delta A) = r \! \int_{\bar N} \pi^*(\star_{\ph} \delta \ph)
\end{equation*}
since the variations in $N$ and $A$ vanish because $(N,A)$ is an associative
cycle, and we have used Lemma~\ref{ddtpslemma} to compute the variation
in $\ph$. Comparing the two expressions, we see
\begin{equation*}
\nu^* (\tilde \alpha) = -\frac{1}{2} d \Phi_{3, \ph}
\end{equation*}
and hence $\nu^* (\tilde \omega) = \nu^*(d\tilde \alpha) = d
\nu^* (\tilde \alpha) = -\frac{1}{2} d^2 \Phi_{3, \ph} = 0$ as
claimed.
\end{proof}

\begin{rmk} \label{physics4rmk}
The proof of Proposition~\ref{nuprop} fails if one instead uses the function
$F = -\log(f)$ as the superpotential. This should be compared
to Remarks~\ref{physicsrmk}, ~\ref{physics2rmk}, and~\ref{physics3rmk}.
\end{rmk}

An interesting question concerns the non-degeneracy of the critical
points of $\Phi_{3, \ph}$. In~\cite{M} McLean proved that the local
deformations of associative submanifolds can be (and often are) obstructed.
It seems reasonable that our functional is non-degenerate at a critical point
$(N_0, A_0)$ if and only if the deformations are unobstructed at that point.
The dimension of the image $\nu(\mathcal N_3)$ of the Abel-Jacobi map
should also depend on the degeneracy of the critical points. One would like
to know exactly when this image is half-dimensional, so the image
is an honest Lagrangian submanifold.

The expected dimension of the moduli space of associative submanifolds
on a given \G-manifold is zero, since it is given by the index of a
twisted Dirac operator on an odd-dimensional manifold. If the kernel of
this Dirac operator is indeed trivial, then the moduli space is indeed
a discrete set. Furthermore, almost \G-manifolds also have a discrete
moduli space of associative submanifolds. When such generic situation
occurs, the image of our Abel-Jacobi map is half dimensional and
therefore a Lagrangian subspace in the universal intermediate
Jacobian. When the kernel is nontrivial, the moduli space of
asociative submanifolds could have positive dimension. However, at the
same time, such solutions may not be able to survive under
deformations of \Gs s, since the obstruction space is related
to the cokernel which is also nontrivial. Our Proposition~\ref{nuprop} can be
interpreted as a quantitative measure of such obstructions. In fact,
the image of $\nu$ may indeed be a Lagrangian subspace in general.
This situation should also be compared with the case of holomorphic curves,
which are the analogues of instantons for Calabi-Yau $3$-folds.

\subsection{Universal moduli of branes}
\label{branessec}

A {\em brane} in the \G-manifold $M^7$ is a {\em coassociative}
submanifold $L^4$. They are $4$-dimensional submanifolds of $M$
which are calibrated with respect to $\ps$. A $4$-manifold $L^4$ in $M$
is coassociative if and only if
\begin{equation} \label{coassoceq}
\left. \ph \right|_L = 0
\end{equation}
from which it is clear they are analogues of {\em Lagrangian} submanifolds
(see~\cite{LL2}.)

We will denote the functional $\Phi_{k, \ph}$ defined
in~\eqref{chernsimonsdefneq} by $\Phi_{\! \mathcal C}$ when $k=4$.
Explictly,
\begin{equation} \label{coassocfunctionaleq}
\Phi_{\! \mathcal C} (L, A) = \Phi_{4, \ph} (L, A) =
\frac{i}{\pi} \int_{\bar L} \tr \left( F_{\bar A} \wedge \pi^*(\ph) \right) 
\end{equation}
in this case. The critical points of $\Phi_{\! \mathcal C}$ are
described by the following theorem.

\begin{thm} \label{coassocfunctionalthm}
The pair $(L_0, A_0)$ is a critical point of $\Phi_{\! \mathcal C}$ if and
only if $L_0$ is a coassociative submanifold of $M$ and $\tr(F_{A_0})$ is a
{\em self-dual} $2$-form on $L_0$. In particular, when $E$ is a complex
line bundle (rank $r = 1$), then $F_{A_0}$ is self-dual, so $A_0$ is a
self-dual connection.
\end{thm}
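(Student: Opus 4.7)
The plan is to mirror the two-step structure of the proof of Theorem~\ref{assocfunctionalthm}: first vary the connection with $L = L_0$ fixed to extract the coassociative condition, then vary the submanifold with $A = A_0$ fixed to extract the self-duality condition on $\tr(F_{A_0})$. Before varying, I would simplify $\Phi_{\! \mathcal C}$ by noting that since $\bar L$ is $5$-dimensional, only the cross term of $\frac{i}{2\pi} F_{\bar A}$ (degree $2$) with $\pi^*(\ph)$ (degree $3$) from $\frac{1}{2!}(\frac{i}{2\pi} F_{\bar A} + \ph + \st\ph)^2$ contributes to the integrand; all $\st\ph$ and higher-order $F_{\bar A}$ terms are either of wrong degree or vanish by dimensional reasons on $L_0$. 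This recovers the explicit form~\eqref{coassocfunctionaleq}.

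For the first variation, take $A_t = A_0 + t\alpha$ with $\alpha$ any $\mathfrak{u}(r)$-valued $1$-form on $L_0$, so that on $\bar L = [0,1] \times L_0$ we have $F_{\bar A} = F_{A_t} + dt \wedge \alpha$. The piece $\tr(F_{A_t}) \wedge \pi^*(\ph)$ is a $5$-form on the $4$-dimensional slice $L_0$ and hence vanishes, leaving
\begin{equation*}
\left. \ddt \right|_{t=0} \Phi_{\! \mathcal C}(L_0, A_t) \; = \; c \int_{L_0} \tr(\alpha) \wedge \ph|_{L_0}
\end{equation*}
for a nonzero constant $c$. Since $\tr(\alpha)$ ranges over all (imaginary-valued) $1$-forms on $L_0$ and the wedge pairing of a $1$-form with the $3$-form $\ph|_{L_0}$ on the $4$-manifold $L_0$ is non-degenerate unless $\ph|_{L_0} = 0$, vanishing for every $\alpha$ forces $\ph|_{L_0} = 0$; that is, $L_0$ is coassociative.

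For the second variation, take $L_t = \exp(tX) \cdot L_0$ with $A_0$ held fixed (meaning extended to $\bar L$ via the projection $\bar L \to L_0$ induced by the flow of $X$, so that $F_{\bar A}$ is the pullback of $F_{A_0}$ from $L_0$). A straightforward extension of Lemma~\ref{basiclemma} to integrands of the mixed form $\mathrm{pr}^*\beta \wedge \pi^*\alpha$, where $\beta$ lives on $L_0$ and $\alpha$ on $M$, gives
\begin{equation*}
\left. \ddt \right|_{t=0} \Phi_{\! \mathcal C}(L_t, A_0) \; = \; c' \int_{L_0} \tr(F_{A_0}) \wedge (X \hk \ph) \big|_{L_0}.
\end{equation*}
Now invoke the standard McLean isomorphism for coassociative $L_0$: the map $X \mapsto (X \hk \ph)|_{L_0}$ identifies the normal bundle of $L_0$ with $\Lambda^2_+ T^* L_0$. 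Thus as $X$ varies over normal fields, $(X \hk \ph)|_{L_0}$ sweeps out every self-dual $2$-form $\omega_+$ on $L_0$, so the critical condition reads $\int_{L_0} \tr(F_{A_0}) \wedge \omega_+ = 0$ for all $\omega_+ \in \Omega^2_+(L_0)$. Decomposing $\tr(F_{A_0}) = (\tr F_{A_0})^+ + (\tr F_{A_0})^-$ and using orthogonality of $\Omega^2_\pm$ under $\alpha \wedge \beta \mapsto \langle \alpha, \ast \beta \rangle \vol$, only the piece of the curvature of the appropriate duality type pairs nontrivially; setting $\omega_+$ equal to this piece and invoking positive definiteness of $\int |\cdot|^2 \vol$ forces $\tr(F_{A_0})$ to be self-dual. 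In the rank-one case $\tr(F_{A_0}) = F_{A_0}$, yielding the final claim. The main obstacle I anticipate is making precise the sense in which $A_0$ is ``held fixed'' as $L_t$ varies (so that the generalized Lemma~\ref{basiclemma} applies cleanly), and nailing down the sign convention so that the normal-bundle identification yields self-duality rather than anti-self-duality.
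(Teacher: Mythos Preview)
Your approach mirrors the paper's proof closely: vary the connection first to force $\ph|_{L_0} = 0$, then vary the submanifold to extract the duality condition on $\tr(F_{A_0})$. The paper in fact obtains, from the submanifold variation, an additional term $\int_{L_0} (X \hk \tr(F_{A_0})) \wedge \ph$; your more careful setup, with $F_{\bar A}$ pulled back from $L_0$ along the projection $\bar L \to L_0$, legitimately avoids this term, and in any case it vanishes since $\ph|_{L_0} = 0$ has already been established.

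There is, however, a genuine error in your final deduction. In the convention used in the paper (with a reference to~\cite{K2, M} and an explicit Remark~\ref{signsrmk} immediately after the theorem), the map $X \mapsto (X \hk \ph)|_{L_0}$ identifies the normal bundle with the \emph{anti}-self-dual $2$-forms $\Omega^2_-(L_0)$, not $\Omega^2_+$. More importantly, independent of which convention one adopts, your logic is inverted: if $(X \hk \ph)|_{L_0}$ swept out all of $\Omega^2_+$, then the vanishing of $\int_{L_0} \tr(F_{A_0}) \wedge \omega_+$ for every $\omega_+ \in \Omega^2_+$ would kill the \emph{self-dual} component of $\tr(F_{A_0})$ (since $\alpha \wedge \beta = 0$ for $\alpha \in \Omega^2_+$, $\beta \in \Omega^2_-$, while $\alpha \wedge \alpha = |\alpha|^2 \vol$ for $\alpha \in \Omega^2_+$), forcing $\tr(F_{A_0}) \in \Omega^2_-$. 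To obtain the stated conclusion that $\tr(F_{A_0})$ is self-dual, you need the McLean isomorphism to land in $\Omega^2_-$, exactly as in the paper. So the sign issue you flagged as an anticipated obstacle is real, and your current resolution of it is backwards.
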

\begin{proof}
We choose $(L_0, A_0)$ to be our base point. Consider first a
variation of the connection: $A_t = A_0 + t A$, where $L = L_0$ is
fixed. Then using equation~\eqref{Nfixedfunctionaleq},
(where $A'_t = A$) we have
\begin{eqnarray*}
\left. \ddt \right|_{t=0} \Phi_{\! \mathcal C} (L_0, A_t) & = &
\left. \ddt \right|_{t=0} \frac{i}{2 \pi} \int_0^t \int_{L_0} ds \wedge
\tr \left( A \wedge \ph \right) \\ & = & \frac{i}{2 \pi} \int_{L_0} 
\tr(A) \wedge \ph
\end{eqnarray*}
This must vanish for all possible $A$, hence $\left. \ph \right|_{L_0} = 0$
and thus by~\eqref{coassoceq}, $L_0$ is coassociative.

Now let $L_t$ be a variation of $L_0$, which we can write as
$L_t = \exp(tX) \cdot L_0$, for some normal vector field $X$ on $L_0$.
By Lemma~\ref{basiclemma}, we have
\begin{equation*}
\left. \ddt \right|_{t=0} \Phi_{\! \mathcal C} (L_t, A_0) = \frac{i}{\pi}
\int_{L_0} (X \hk \tr(F_{A_0})) \wedge \ph + \int_{L_0} \tr(F_{A_0})
\wedge (X \hk \ph)
\end{equation*}
The first term vanishes since we just showed for $(L_0, A_0)$ a
critical point of $\Phi_{\! \mathcal C}$, we must have $\ph = 0$ on
$L_0$. When $L_0$ is a coassociative submanifold, the map $X \mapsto
(X \hk \ph)$ gives an isomorphism between the normal vector fields $X$
on $L_0$ with the {\em anti-self-dual} $2$-forms on $L_0$.
(See~\cite{K2, M} for details.) Therefore since the above expression
must vanish for all normal vector fields $X$, we must have that
$\tr(F_{A_0})$ is orthogonal to $\Omega^2_-(L_0)$, so it is in
$\Omega^2_+(L_0)$. That is, $(L_0, A_0)$ is critical for all
variations if and only if $L_0$ is coassociative and $\tr(F_{A_0})$ is
a self-dual $2$-form.
\end{proof}

\begin{rmk} \label{signsrmk}
There are two different conventions for orientations in \G-geometry. With the
other convention, $(X \hk \ph)$ would be self-dual for all normal vector fields
$X$ on a coassociative, and $\tr(F_{A_0})$ would be anti-self-dual in the
above theorem. See~\cite{K2} for more about signs and orientations in
\G-geometry.
\end{rmk}

\begin{defn} \label{coassocmodulidefn}
The pair $(L, A)$ where $L$ is a coassociative submanifold of $M$
with respect to $\ph$ and $A$ is a connection on $N$ such that
$\tr(F_A)$ is self-dual will be called a {\em coassociative cycle}.
Using the notation of~\eqref{criticalmodulieq} we see that
$\mathcal N_{4, \ph}$ is the space of coassociative cycles with respect
to the \G-structure $\ph$.
\end{defn}

Now consider the Abel-Jacobi map from Definition~\eqref{abeljacobidefn}
with $k=4$. It is easy to see that this map is non-trivial only for
$l=3$. Let us denote $\nu^3_4$ by $\mu$. Explictly, we have
\begin{equation*}
\mu (\ph, L, A) = (\ph, \bar \mu (L, A)) = (\ph, D)
\end{equation*}
where $D = \bar \mu (L, A) \in \HfS$ is defined by
\begin{equation} \label{mudefneq}
\int_M \bar \mu (L, A) \wedge C = \frac{i}{2\pi} \int_{\bar L}
\tr(F_{\bar A}) \wedge C
\end{equation}
for any $C \in \HtS$.

\begin{prop} \label{muprop}
The image of $\mu$ in $\mathcal J$ is isotropic with respect to the
symplectic structure $\omega$ on $\mathcal J \cong \mathcal M \times
\HfS$ described in~\eqref{standardsymplecticeq}.
\end{prop}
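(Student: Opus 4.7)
The plan is to mirror the strategy of the proof of Proposition~\ref{nuprop}: exhibit a primitive $1$-form $\alpha$ with $d\alpha = \omega$, compute the pullback $\mu^*(\alpha)$ along $\mu$ restricted to $\mathcal{N}_4$, and identify it with a constant multiple of $d\Phi_{\!\mathcal{C}}$, so that $\mu^*(\omega) = d\mu^*(\alpha) = 0$.

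First, since $\omega$ is the canonical symplectic form on $\mathcal{J}$ under the local isomorphism with $T^*\mathcal{M}$, it is exact. In canonical coordinates $\omega = dx^i \wedge dy_i = d\alpha$ with $\alpha = \tfrac{1}{2}(x^i\,dy_i - y_i\,dx^i)$, whose invariant expression is
\begin{equation*}
\alpha|_{(\ph, D)}(\eta, \theta) = \tfrac{1}{2}\int_M \ph \wedge \theta - \tfrac{1}{2}\int_M D \wedge \eta
\end{equation*}
for $\eta \in T_\ph \mathcal{M} \cong \HtR$ and $\theta \in T_D \HfS \cong \HfR$. Because $\bar\mu$ depends only on $(L, A)$ and not on $\ph$, pulling back by $\mu$ at a point $(\ph, L, A) \in \mathcal{N}_4$ along a variation $(\delta\ph, \delta L, \delta A)$ gives
\begin{equation*}
\mu^*(\alpha) = \tfrac{1}{2}\int_M \ph \wedge \bar\mu_*(\delta L, \delta A) - \tfrac{1}{2}\int_M \bar\mu(L, A) \wedge \delta\ph.
\end{equation*}

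Next, I would exploit the defining relation~\eqref{mudefneq}. Setting $C = \ph$ and comparing with~\eqref{coassocfunctionaleq} yields the scalar identity $\int_M \bar\mu(L, A)\wedge \ph = \tfrac{1}{2}\Phi_{\!\mathcal{C}}(L, A)$. Differentiating it in the direction $(\delta\ph, \delta L, \delta A)$ produces
\begin{equation*}
\int_M \bar\mu_*(\delta L, \delta A) \wedge \ph + \int_M \bar\mu(L, A) \wedge \delta\ph = \tfrac{1}{2}\, d\Phi_{\!\mathcal{C}}(\delta\ph, \delta L, \delta A),
\end{equation*}
while~\eqref{mudefneq} with $C = \delta\ph$, together with the explicit $\ph$-derivative of~\eqref{coassocfunctionaleq}, shows that $\int_M \bar\mu(L, A) \wedge \delta\ph = \tfrac{1}{2}(\partial_\ph \Phi_{\!\mathcal{C}})\cdot \delta\ph$, where $\partial_\ph$ denotes the partial derivative with $(L, A)$ held fixed.

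The key step is then to restrict to the critical locus $\mathcal{N}_4$: by Theorem~\ref{coassocfunctionalthm}, at a point of $\mathcal{N}_4$ both $\partial_L \Phi_{\!\mathcal{C}}$ and $\partial_A \Phi_{\!\mathcal{C}}$ vanish, so $d\Phi_{\!\mathcal{C}} = (\partial_\ph \Phi_{\!\mathcal{C}})\cdot\delta\ph$ as a $1$-form on $\mathcal{N}_4$. Combining the identities above collapses the pullback to $\mu^*(\alpha) = -\tfrac{1}{4}\, d\Phi_{\!\mathcal{C}}$ on $\mathcal{N}_4$, whence $\mu^*(\omega) = d\mu^*(\alpha) = -\tfrac{1}{4}\, d^2\Phi_{\!\mathcal{C}} = 0$, as desired. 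The main obstacle is the chain-rule bookkeeping, correctly separating the variation of $\int_M \bar\mu(L, A)\wedge \ph$ into $(L, A)$- and $\ph$-contributions, and then using the coassociative-cycle equations to discard the $(\delta L, \delta A)$ pieces of $d\Phi_{\!\mathcal{C}}$; in contrast to Proposition~\ref{nuprop}, the primitive of the standard (as opposed to Hodge-twisted) symplectic form involves no $\star_\ph$ operator, so Lemma~\ref{ddtpslemma} does not enter the computation and the calculation is somewhat shorter.
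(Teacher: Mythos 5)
Your proposal is correct and follows essentially the same route as the paper: both exhibit the same primitive $\alpha$ with $d\alpha = \omega$, pull it back along $\mu$, and use~\eqref{mudefneq} together with the critical-point equations to identify $\mu^*(\alpha)$ with a constant multiple of $d\Phi_{\! \mathcal C}$ on $\mathcal N_4$, whence $\mu^*(\omega) = d\mu^*(\alpha) = 0$. The only (harmless) differences are that the paper kills the $(\delta L, \delta A)$-contribution by computing it explicitly via Lemma~\ref{basiclemma} and invoking coassociativity and self-duality of $\tr(F_A)$ term by term, whereas you deduce its vanishing formally from the chain rule and Theorem~\ref{coassocfunctionalthm} (the same mathematics, repackaged), and your constant $-\tfrac{1}{4}$ versus the paper's $-\tfrac{1}{2}$ merely reflects the factor-of-two discrepancy between the coefficient in~\eqref{coassocfunctionaleq} and what~\eqref{Nfixedfunctionaleq} yields, which is immaterial to the conclusion.
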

\begin{proof}
Since the proof is very similar to Proposition~\ref{nuprop}, we will be brief.
This time $\omega = d \alpha$, where
\begin{equation} \label{alphaeq}
\alpha_{(\ph, D)} (\eta, \theta) = \frac{1}{2} \int_M \ph \wedge
\theta - \frac{1}{2} \int_M D \wedge \eta
\end{equation}
for $\eta \in T_{\ph} \mathcal M \cong \HtR$ and $\theta \in T_D \HfS \cong
\HfR$. Now one can compute as in the proof of Proposition~\ref{nuprop} that
\begin{multline*}
\left. \mu^* (\alpha) \right|_{(\ph, L, A)} (\delta \ph, \delta L, 
\delta A) = \frac{i}{4\pi} \int_L (X \hk \tr(F_A)) \wedge \ph \\ {}+ \frac{i}{4\pi} \int_L
\tr(F_A) \wedge (X \hk \ph) - \frac{i}{4\pi} \int_{\bar L} \tr(F_{\bar A}) \wedge \delta \ph
\end{multline*}
The first term vanishes since $L$ is coassociative and the second term vanishes
since in addition $\tr(F_A)$ is self-dual. Now consider again the
functional $\Phi_{4, \ph}$, but consider also variations in $\ph$.
We have
\begin{equation*}
\left. d\Phi_{4, \ph} \right|_{(\ph, L, A)} (\delta \ph, \delta L,
\delta A) = \frac{i}{2\pi} \int_{\bar L} \tr(F_{\bar A}) \wedge \delta \ph
\end{equation*}
since the variations in $L$ and $A$ vanish because $(L,A)$ is a coassociative
cycle. Thus
\begin{equation*}
\mu^* (\alpha) = -\frac{1}{2} d \Phi_{4, \ph}
\end{equation*}
and hence as before $\mu^*(\omega) = 0$.
\end{proof}

As in the associative case, it would be interesting to be able to
relate the non-degeneracy of the critical points of $\Phi_{4, \ph}$ to
the deformation theory. In constrast to the instanton case,
in~\cite{M} McLean proved that the local deformations of coassociative
submanifolds are always unobstructed. The moduli space of
coassociative submanifolds is smooth and of dimension $b^2_-(L)$. On the
other hand, a generic $L$ does not support any self-dual connections of
rank $1$. They occur discretely on generic $b^2_-(L)$-dimensional families of
metrics on $L$. Therefore, when such genericity occurs, then our above
arguments really show that the image of Abel-Jacobi map is indeed a
Lagrangian subspace in the universal intermediate Jacobian.

\subsection{Universal moduli of Donaldson-Thomas connections}
\label{DTsec}

Finally, we consider the functional $\Phi_{k, \ph}$
from~\eqref{chernsimonsdefneq} for $k=7$. In this case $N = M$ necessarily, and
thus we can only vary a connection $A$ on a bundle $E$ over $M$. We will
denote the functional by $\Phi_{\! \mathcal{DT}}$ this time.
Explictly,
\begin{equation*}
\Phi_{\! \mathcal{DT}} (A) = \Phi_{7, \ph} (M, A) =
\int_{\bar M} \tr \left( \frac{3}{6} {\left( \frac{i}{2\pi} F_{\bar A}
\right)}^2 \wedge \pi^*(\ps) + \frac{1}{24} {\left( \frac{i}{2\pi} F_{\bar A}
\right)}^4 \right)
\end{equation*}
where $\bar M = [0,1] \times M$ and $\bar A = \pi^*(A_t)$.
We also have that
\begin{equation} \label{DTfunctionaleq}
\Phi_{\! \mathcal{DT}} (A) = -\frac{1}{4 \pi^2} \int_0^1 \int_M dt \wedge 
\tr \left( A'_t \wedge \left( F_{A_t} \wedge \ps - \frac{1}{24 \pi^2}
F_{A_t}^3 \right) \right)
\end{equation}
by using~\eqref{Nfixedfunctionaleq}. The critical points of $\Phi_{\!
  \mathcal DT}$ are described by the following theorem.

\begin{thm} \label{DTfunctionalthm}
The connection $A_0$ is a critical point of $\Phi_{\! \mathcal DT}$ if and
only if $A_0$ satisfies the {\em deformed Donaldson-Thomas} equation:
\begin{equation} \label{DTeq}
F_{A_0} \wedge \ps - \frac{1}{24 \pi^2} F_{A_0}^3 = 0
\end{equation}
\end{thm}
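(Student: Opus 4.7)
The plan is to apply formula~(\ref{DTfunctionaleq}) directly: it already presents $\Phi_{\! \mathcal{DT}}$ along a path of connections as $-\frac{1}{4\pi^2}\int_0^1\!\!\int_M dt\wedge\tr(A'_t\wedge\mathcal{E}(F_{A_t}))$, where
\[
\mathcal{E}(F)\;=\;F\wedge\ps\;-\;\frac{1}{24\pi^2}\,F^3
\]
is exactly the operator on the left of~(\ref{DTeq}). Thus the variational problem collapses: given an infinitesimal variation $a$ of $A_0$ (a matrix-valued $1$-form of the appropriate Lie-algebra type), I would take the linear path $A_s=A_0+s\,a$ for $s\in[0,T]$, so that $A'_s=a$. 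Inserting this into~(\ref{DTfunctionaleq}) gives
\[
\Phi_{\!\mathcal{DT}}(A_0+Ta)\;=\;-\frac{1}{4\pi^2}\int_0^T\!\!\int_M\tr\bigl(a\wedge\mathcal{E}(F_{A_s})\bigr)\,ds,
\]
and the fundamental theorem of calculus yields
\[
\left.\frac{d}{dT}\right|_{T=0}\Phi_{\!\mathcal{DT}}(A_0+Ta)\;=\;-\frac{1}{4\pi^2}\int_M\tr\bigl(a\wedge\mathcal{E}(F_{A_0})\bigr).
\]

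Since the pairing $(a,B)\mapsto\int_M\tr(a\wedge B)$ between matrix-valued $1$-forms and matrix-valued $6$-forms on $M^7$ is non-degenerate (the trace form on $\mathfrak{u}(r)$ being non-degenerate pointwise), demanding that this first variation vanish for every admissible $a$ is equivalent to $\mathcal{E}(F_{A_0})=0$, which is exactly~(\ref{DTeq}). The reverse implication is immediate: if $A_0$ satisfies~(\ref{DTeq}), then the integrand above vanishes identically, so every first variation is zero.

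The main point---and essentially the only subtle one---is that all the honest work has already been done in deriving~(\ref{DTfunctionaleq}), which isolated the degree-$6$ components of the expansion of $\exp(\tfrac{i}{2\pi}F_{\bar A}+\pi^*\ph+\pi^*\stph)$ that actually contribute (purely-$\ph$ and purely-$\stph$ contributions vanishing since $\ph\wedge\ph=0$ by odd-degree anti-commutativity and $\stph\wedge\stph$ is the pullback of an $8$-form on $M^7$). The present proof then just differentiates that formula once and exploits non-degeneracy of the trace pairing---in direct parallel with Theorems~\ref{assocfunctionalthm} and~\ref{coassocfunctionalthm}, only simpler because the submanifold $N=M$ admits no normal-vector-field variation.
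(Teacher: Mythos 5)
Your proposal is correct and follows essentially the same route as the paper: the paper also takes the linear path $A_t = A_0 + tA$, inserts it into~\eqref{DTfunctionaleq}, applies the fundamental theorem of calculus to obtain the first variation $-\frac{1}{4\pi^2}\int_M \tr\bigl(A \wedge (F_{A_0}\wedge\ps - \frac{1}{24\pi^2}F_{A_0}^3)\bigr)$, and concludes by requiring this to vanish for all $A$. Your explicit appeal to the non-degeneracy of the trace pairing just makes precise the paper's closing remark that ``this must vanish for all possible $A$.''
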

\begin{proof}
We will choose $A_0$ to be our base point. Consider a variation of
the connection: $A_t = A_0 + t A$. Hence $F_{A_t} = F_{A_0} + t (\cdots) + t^2
(\cdots)$. Using equation~\eqref{DTeq}, (where $A'_t = A$) we have
$\left. \ddt \right|_{t=0} \Phi_{\! \mathcal DT} (A_t) =$
\begin{eqnarray*}
& & \left. \ddt \right|_{t=0} -\frac{1}{4 \pi^2} \int_0^t \int_{M} ds \wedge
\tr \left( A \wedge \left( F_{A_0} \wedge \ps - \frac{1}{24 \pi^2}
F_{A_0}^3 + s (\cdots) \right) \right) \\
& = & -\frac{1}{4 \pi^2} \int_{M} \tr \left( A \wedge \left( F_{A_0}
\wedge \ps - \frac{1}{24 \pi^2} F_{A_0}^3 \right) \right)
\end{eqnarray*}
This must vanish for all possible $A$, and thus the theorem follows.
\end{proof}

\begin{rmk} \label{DTrmk}
In~\cite{DT}, Donaldson and Thomas introduced the equation
\begin{equation*}
F_A \wedge \ps = 0
\end{equation*}
for a connection $A$ on a \G-manifold $M$, as a generalization of the
{\em anti-self-dual} equations of Yang-Mills theory, to this setting.
Such connections have curvature $2$-form $F_A$ contained in $\wtwf
\cong \lieg$.  The deformed Donaldson-Thomas equation was introduced
in~\cite{LL}, where $F_A$ is used instead of $\frac{i}{2\pi} F_A$. The
cubic correction term allows the deformed DT equation to fit into the
general setting described in Section~\ref{chernsimonssec}. This
`deformed' equation has been studied by physicists
in~\cite{ddES1}.

One can obtain the original Donaldson-Thomas connections from the
same functional after truncating the Chern character terms. That is,
replacing $\exp$ by $\exp_{\leq2} ( x ) = 1 + x + \frac{1}{2} x^2$.
\end{rmk}

\begin{defn} \label{DTmodulidefn}
A connection $A$ satisfying equation~\eqref{DTeq} will be called
a {\em DDT connection}. Using the notation
of~\eqref{criticalmodulieq} we see that $\mathcal N_{7, \ph}$
is the space of DDT connections with respect to the \G-structure
$\ph$.
\end{defn}

Now consider the Abel-Jacobi map from Definition~\eqref{abeljacobidefn}
with $k=7$, and take $l=4$. Let us denote $\nu^4_7$ by $\chi$.
Explictly, we have
\begin{equation*}
\chi (\ph, A) = (\ph, \bar \chi (A)) = (\ph, C)
\end{equation*}
where $C = \bar \chi (A) \in \HtS$ is defined by
\begin{equation} \label{chidefneq}
\int_M \bar \chi (A) \wedge D = \int_{M \times [0,1]}
\left( r \, \pi^*(\st \ph) - \frac{1}{8\pi^2} \tr(F_{A_t}^2) \right)
\wedge \pi^*(D)
\end{equation}
for any $D \in \HfS$, where $r$ is the rank of $E$. Notice that the
first term has no $dt$ component, and thus must vanish on $\bar M = M
\times [0,1]$. Then since $\frac{i}{2\pi} \tr(F_{A_t})$ is an integral
class, this map is well-defined.

\begin{prop} \label{chiprop}
The image of $\chi$ in $\mathcal J$ is isotropic with respect to the
symplectic structure $\tilde \omega$ on $\mathcal J \cong \mathcal M
\times \HtS$ described in~\eqref{standardsymplecticeq2}.
\end{prop}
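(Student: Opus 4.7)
The plan is to mimic the strategy of Propositions~\ref{nuprop} and~\ref{muprop}. Since $\tilde\omega$ is exact, write $\tilde\omega = d\tilde\alpha$ for the same tautological $1$-form as in~\eqref{alphaeq2}, and show that the pullback $\chi^*(\tilde\alpha)$ differs from $-\tfrac{1}{2}\, d\Phi_{\!\mathcal{DT}}$ only by terms that vanish on the critical locus defined by the deformed Donaldson--Thomas equation~\eqref{DTeq}. Exterior differentiation then forces $\chi^*(\tilde\omega) = 0$.

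Concretely, I would first fix a point $(\ph, A) \in \chi(\mathcal N_7)$ and a tangent vector $(\delta\ph, \delta A)$ to $\mathcal N_7$ there. Using~\eqref{alphaeq2} with $\mu = \bar\chi_*(\delta A) + \text{(variation of }\bar\chi\text{ under }\delta\ph\text{)}$, the computation
\begin{equation*}
\left.\chi^*(\tilde\alpha)\right|_{(\ph, A)}(\delta\ph, \delta A) = \tfrac{1}{2}\int_M \ph \wedge \star_\ph \, \bar\chi_*(\delta\ph, \delta A) \;-\; \tfrac{1}{2}\int_M \bar\chi(A) \wedge \star_\ph \delta\ph
\end{equation*}
reduces, via $\star_\ph \ph = \tfrac{4}{3}\ps$ and the defining relation~\eqref{chidefneq} applied with $D = \ps$, to an integral over $\bar M = [0,1]\times M$ of the curvature polynomial wedged with $\pi^*(\delta\ph)$ plus a boundary contribution controlled by $\tr(F_A^2)\wedge \delta A$ and $\tr(F_A \wedge \delta A)\wedge \ph$. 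The key identity I will need is the time derivative of~\eqref{chidefneq} with $D = \ps$, combined with Lemma~\ref{ddtpslemma} to handle $\frac{\partial}{\partial t}(\star_{\ph_t})$; this is the analogue of the step at the end of the proof of Proposition~\ref{nuprop}.

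Next I compute $d\Phi_{7,\ph}$ allowing variations in $\ph$ as well. From~\eqref{DTfunctionaleq} together with Lemma~\ref{ddtpslemma}, the $\delta A$ contribution is proportional to $\int_M \tr\bigl(\delta A \wedge (F_A \wedge \ps - \tfrac{1}{24\pi^2} F_A^3)\bigr)$, which vanishes identically on $\mathcal N_{7,\ph}$ by Theorem~\ref{DTfunctionalthm}. The remaining piece, the $\delta\ph$ variation, contributes $\int_{\bar M}\tr\bigl(\tfrac{1}{2}(\tfrac{i}{2\pi}F_{\bar A})^2\bigr)\wedge \pi^*(\star_\ph \delta\ph) + r \int_{\bar M}\pi^*(\star_\ph\delta\ph)\wedge\text{(top piece)}$, matching exactly the surviving term from $\chi^*(\tilde\alpha)$ after using~\eqref{chidefneq} to rewrite $\int_M \bar\chi(A) \wedge \star_\ph\delta\ph$ as an integral over $\bar M$.

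Matching coefficients should yield $\chi^*(\tilde\alpha) = -\tfrac{1}{2}\, d\Phi_{\!\mathcal{DT}}$ on the DDT locus (with $\ph$-dependence included), whence $\chi^*(\tilde\omega) = d\chi^*(\tilde\alpha) = 0$. The main obstacle will be bookkeeping: unlike the associative case where $\Phi$ contains only a quadratic curvature term, here both a quartic term and the variation of $\ps = \stp\ph$ with $\ph$ enter, so I will have to carefully track the coefficient $\tfrac{4}{3}$ from $\star_\ph \ph = \tfrac{4}{3}\ps$, verify that the quartic contribution to the $\delta A$ variation of $\chi^*(\tilde\alpha)$ indeed matches the $\tfrac{1}{24\pi^2} F_A^3$ term in~\eqref{DTeq} so that these cancel against each other on the critical locus, and confirm that the normalization constants in~\eqref{chidefneq} and~\eqref{DTfunctionaleq} are compatible. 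Once the algebraic matching is done, closedness of $\chi^*(\tilde\alpha)$ modulo the exact form $d\Phi_{\!\mathcal{DT}}$ completes the proof.
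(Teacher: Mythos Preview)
Your approach is essentially the paper's: write $\tilde\omega = d\tilde\alpha$ with $\tilde\alpha$ as in~\eqref{alphaeq2}, and show that $\chi^*(\tilde\alpha)$ is a constant multiple of $d\Phi_{7,\ph}$ (the paper obtains the constant $1$ rather than your anticipated $-\tfrac12$, but this is irrelevant for the conclusion).

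A few points where your outline drifts from what actually happens. First, your worry about a ``quartic contribution to the $\delta A$ variation of $\chi^*(\tilde\alpha)$'' is misplaced: from~\eqref{chidefneq} (with the general Definition~\ref{abeljacobidefn}) the map $\bar\chi$ involves only the degree-four part of the Chern character, namely $r\,\pi^*(\st\ph) - \tfrac{1}{8\pi^2}\tr(F_{\bar A}^2)$, so no $F^4$ term ever appears in $\chi^*(\tilde\alpha)$. Second, the paper singles out two ingredients you do not mention but will need when you carry out the computation: the pointwise identity $(X\hk\ps)\wedge\ps = 0$ for any vector field $X$, and the observation that an $8$-form on $\bar M = M\times[0,1]$ vanishes unless it carries a $dt$ factor. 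The latter is what kills the $r\,\pi^*(\st\ph)$ contribution in~\eqref{chidefneq} and its $\delta\ph$-variation; you should make explicit use of both when matching terms. With those in hand the algebra closes and $\chi^*(\tilde\omega)=0$ follows.
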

\begin{proof}
Proceeding as in the proof of Proposition~\ref{nuprop}, it is
easy to check that
\begin{equation*}
\chi^* (\tilde \alpha) = d \Phi_{7, \ph}
\end{equation*}
where $\tilde \omega = d \tilde \alpha$ is the symplectic form.
One needs to use the fact that $(X \hk \ps) \wedge
\ps$ vanishes for any vector field $X$, and that an $8$-form on $\bar M
= M \times [0,1]$ vanishes unless it has a $dt$ factor. The details are
left to the reader.
\end{proof}

As before, the non-degeneracy of the critical points of $\Phi_{7,
\ph}$ may be related to the deformation theory of deformed
Donaldson-Thomas connections. As far as we know this has not been
studied.

\subsection{Universal Moduli as Minimizers} \label{minimizerssec}

Now we consider another functional $\Psi$ which looks very similar 
to $\Phi,$ but is slightly different. It is defined as follows.

\begin{eqnarray*}
\Psi_{k, \ph} & : & \widetilde{\mathcal C_k} \rightarrow \mathbb R \\
\Psi_{k, \ph} \left( N, A \right) & = & \int_{N} \tr \left[ \exp \left(
\frac{i}{2\pi}F_{A} + \ph + \st \ph \right) \right]
\end{eqnarray*}
We also write $\Psi_{k} \left( \ph, N, A \right) = \Psi_{k, \ph}
\left( N, A \right)$. Note that since each $N$ in our configuration
space is in the same homology class, $\Psi_{k, \ph}$ is a {\em
topological number}. Let $\mathcal{YM} (A)$ denote
the Yang-Mills functional of the connection $A$. The {\em size}
of a pair $(N,A)$ in $\widetilde{\mathcal C_k}$ is defined to be
$\vol(N) + \mathcal{YM} (A)$.

Suppose that $k=3$. Then we have
\begin{equation*}
\Psi_{3, \ph} \left( N, A \right) = \int_N \ph \, \leq \, \vol \left( N \right)
+ \mathcal{YM} \left( A \right)
\end{equation*}
and the minimum is attained exactly along the critical points of 
$\Phi_{3, \ph}$, namely associative submanifolds coupled with flat
connections.

When $k = 4,$ we need to assume that $\mathrm{c}_2(E) < 0$, so that
the absolute minima of $\mathcal{YM}(A)$ are the self-dual connections. Then
\begin{equation*}
\Psi_{4, \ph} \left( L, A \right) = \int_L \left( \st \ph -
\frac{1}{8\pi^2} \tr \left( F_A^{2} \right) \right) \, \leq \, \vol
\left( L \right) + \mathcal{YM} \left( A \right)
\end{equation*}
and the minimum is attained along the coassociative submanifolds 
coupled with self-dual connections over them.

Finally when $k = 7$, that is when $N = M$, we have
\begin{equation*}
\Psi_{7, \ph} \left( M, A \right) = \int_M \left( \ph \wedge \st \ph
- \frac{1}{8 \pi^2} \tr \left( F_A^2 \right) \wedge \ph \right) \, \leq \, 
\vol \left( M \right) + \mathcal{YM} \left( A \right)
\end{equation*}
and here the minimum is attained for Donaldson-Thomas
connections over $M$, as explained in the section on Yang-Mills
calibrations in~\cite{L3}.

Therefore we see that this topological number $\Psi_{k, \ph}$ in each
case serves as an absolute lower bound for the size of $(N,A)$, and
this lower bound is attained exactly at the critical points of the
functional $\Phi_{k,\ph}$. (This statement requires the rank $r=1$ for
the second case, and we are using ordinary Donaldson-Thomas
connections here instead of DDT connections for the third case.)

In this last case, if we also allow the $3$-form $\ph$ to vary in a
fixed cohomology class of $M$, then the first term is the Hitchin
functional defined in~\cite{Hi4}, and the critical points, which are
also minimum points, are precisely given by torsion free \G-structures
on $M$. Quantizing the Hitchin functional has been discussed in many
physics papers, including~\cite{AW} and~\cite{ddES2}. Therefore,
it is reasonable to expect that one could obtain interesting physics
by quantizing our functional as well.

\section{Conclusion} \label{conclusionsec}

We close by considering generalizations of our constructions as well as
questions for future studies.

\subsection{Generalizations} \label{generalizationssec}

In this paper, we have shown that the universal intermediate 
Jacobian $\mathcal J$ of a \G-manifold has a natural pseudo-K\"ahler
structure and it admits many Lagrangian subspaces given by the universal
moduli of associative cycles, coassociative cycles, and deformed
Donaldson-Thomas connections. All these are critical points for our
functional $\Phi$.

For any given torsion-free \G-structure $\ph$, its
intermediate Jacobian $\mathcal J_{\ph}$ is also a Lagrangian 
submanifold of $\mathcal J$ and together they form a Lagrangian fibration over 
$\mathcal M$. We can also replace $\mathcal J_{\ph}$ by the moduli space of
flat $2$-gerbes on $M$ and obtain similar pictures. A good reference for gerbes, from a differential-geometric point of view, is Hitchin~\cite{Hi3}.

In fact all of these Lagrangian submanifolds $\mathcal J_{\ph} \subset
\mathcal J$ can also be described as critical points of our functional
$\Phi$, if we replace $F_{\bar A}$ by the curvature of a gerbe as
follows. Let $E$ be a $\U$ $2$-gerbe on a manifold 
$\bar N$. A $2$-connection on $E$ is locally given by a $3$-form $C$ on $M$
and its $2$-curvature $F_C = dC$ is a well-defined closed $4$-form on
$\bar N$. The cohomology class $c (E) = \left[ \frac{i}{2\pi} F_C \right] \in
H^4 ( \bar N , \mathbb R )$ is independent of the choice of $C$, 
which is analogous to the first Chern class of a complex line bundle.

Consider $k=7$, and let us fix a $\U$ $2$-gerbe $E$ on $M$ together with
a fixed $2$-gerbe connection $C_0$ on it. We define a functional on the 
space of $2$-gerbe connections on $E$, and we denote this configuration space 
by $\mathcal C$ again.
\begin{eqnarray*}
\Phi_{7, \ph} & : & \widetilde{\mathcal C} \rightarrow \mathbb R \\
\Phi_{7, \ph} \left( A \right) & = & \int_{\bar N} \exp \left(  
\frac{i}{2\pi} F_{\bar C} + \ph + \st \ph \right) \\
& = & \frac{1}{2} \int_{\bar N} \left( \frac{i}{2\pi} F_{\bar C} + \st \ph
\right)^2
\end{eqnarray*}
where $\bar N = M \times [ 0, 1]$ and the $2$-connection 
$\bar C$ over it is determined by a path of $2$-connections $C_t$ over $M$ 
joining $C_0$ and $C$.

As before, the Euler-Lagrangian equation for $\Phi_{7, \ph}$ is 
given by
\begin{equation*}
\frac{i}{2\pi} F_{\bar C} + \st \ph = 0
\end{equation*}
Hence the moduli space of solutions $\mathcal N_{7, \ph}$ is 
non-empty if and only if $c (E) = - [ \st \ph ] \in
\HfR$, in which case $\mathcal N_{7, \ph}$ is
isomorphic to the space of flat $2$-gerbes over $M$, which is $\HtS$.
Furthermore this equals the union of moduli $\mathcal N_7$.
Thus this space $\mathcal N_7$ coincides with the Lagrangian fibers of the 
universal intermediate Jacobian $\mathcal J \rightarrow \mathcal M$.

\subsection{Questions for future study} \label{questionssec}

It would be interesting to see explicitly how these constructions
relate to the analgous ones in the case of a Calabi-Yau $3$-fold. Recall
that the universal intermediate Jacobian $\mathcal J$ in the Calabi-Yau $3$-fold case
is {\em hyperK\"ahler}, thus it admits a whole $S^2$ family of
complex structures. In the \G\ case these reduce to just one complex structure.
While it is true that $M^7 = X^6 \times S^1$ admits a torsion-free \G-structure
when $X^6$ is a Calabi-Yau $3$-fold, the precise relationship between
the moduli spaces of these structures is more complicated. This is because
the complex and K\"ahler moduli of $X^6$ mix together in a non-trivial way
to yield the \G-moduli of $M^7$. This is reflected in the fact that in this
case, $b_1(M) = 1$, and most of the results obtained in this paper were
specific to the case $b_1(M) = 0$, which corresponds to full holonomy \G.

Two more difficult questions which need to be pursued are the following:
\begin{itemize}
\item What is the intersection theory of all these Lagrangian subspaces in
the universal intermediate \G-Jacobian $\mathcal J$?
\item What is the relation of these constructions with the
{\em topological quantum field theory} (TQFT) of Calabi-Yau $3$-folds
and \G-manifolds, as described, for example, in~\cite{L1}?
\end{itemize}
We hope to address these questions in future work.

Another situation that should be studied is when the \G-manifold is
non-compact, with an asymptotically cylindrical end, asymptotic to a
cylinder on a Calabi-Yau $3$-fold. Then the corresponding
constructions on the $\mathrm{CY}$ $3$-fold give the hyperK\"ahler
universal intermediate Jacobian, and various complex Lagrangian submanifolds inside
of it. (See~\cite{Ko1, Ko2} for more about asymptotically cylindrical Calabi-Yau $3$-folds and $\G$ manifolds.) The relation between the two constructions in this case needs
to be understood.  In addition the Topological Field Theory picture of
such a \G/$\mathrm{CY}3$ situation should be part of Topological
M-Theory. Similarly one could consider a \G-manifold which is the end of a
\SP-manifold with an asymptotically cylindrical end. Then the \G/\SP\ story
should be an analogue/generalization of the Chern-Simons/Donaldson Topological
Field Theory story.

There also remains much to understand about the differential geometry of
the moduli space $\mathcal M$ of torsion-free \G-structures. In particular,
the Hitchin metric $\mathcal G$ deserves more attention.
Theorem~\ref{yukawathm}, our main result relating the Yukawa coupling
to the superpotential function $f$, is used in~\cite{KL}
to study the {\em curvature} of this metric.

\end{document}